\def\@biblabel#1{#1.}
\DeclareMathAlphabet{\mathpzc}{OT1}{pzc}{m}{it}
\newcommand{\asin}{\sin^{-1}}
\newcommand{\ld}{\text{\tiny\ensuremath{\bullet}}}
\newcommand{\dDl}{\Dl^{\ld}}
\newcommand{\Hm}{H}
\newcommand{\Wp}{\Ws}
\renewcommand{\vs}{w}
\newcommand\newsubcommand[3]{\def#1{#2\sc@sub{#3}}}
\def\sc@sub#1{\def\sc@thesub{#1}\@ifnextchar_{\sc@mergesubs}{_{\sc@thesub}}}
\def\sc@mergesubs_#1{_{\sc@thesub#1}}
\newcommand{\FL}{\Fs\ell}
\newcommand{\oa}{\om}
\newcommand{\op}{\om}
\title{\boldmath On the wellposedness of the defocusing mKdV equation below $L^{2}$}
\author{Thomas Kappeler\footnote{Partially supported by the Swiss National Science Foundation
}, Jan-Cornelius Molnar\footnote{Partially supported by the Swiss National Science Foundation
}}
\date{\today}
\newcommand{\rmKdV}{mKdV${}_{\#}$\xspace}
\newcommand{\fmKdV}{mKdV${}^{-}$\xspace}
\newcommand{\rfmKdV}{mKdV${}_{\#}^{-}$\xspace}
\newcommand{\nbr}[1]{\{#1\}}
\begin{document}

\maketitle

\begin{abstract}
We prove that the renormalized defocusing mKdV equation on the circle is locally in time $C^{0}$-wellposed on the Fourier Lebesgue space $\FL^p$
for any $2 < p < \infty$. The result implies that the defocusing mKdV equation itself is illposed on these spaces
since the renormalizing phase factor becomes infinite.
The proof is based on the fact that the mKdV equation is an integrable PDE whose Hamiltonian is in the NLS hierarchy.
A key ingredient is a novel way of representing the bi-infinite sequence of frequencies of the renormalized defocusing mKdV equation,
allowing to analytically extend them to $\FL^p$ for any $2 \le p < \infty$ and to deduce asymptotics for $n \to \pm \infty$.

\paragraph{Keywords.}
mKdV equation, frequency map, well-posedness, ill-posedness

\paragraph{2000 AMS Subject Classification.} 37K10 (primary) 35Q53, 35D05 (secondary)
\end{abstract}


\section{Introduction}

Consider the defocusing modified Korteweg-de Vries (mKdV) equation on the circle $\T = \R/\Z$
\begin{equation}
  \label{mkdv}
  \partial_{t}u = -\partial_{x}^{3}u + 6u^{2}\partial_{x}u
\end{equation}
and its renormalized version
\begin{equation}
  \label{rmkdv}
  \partial_{t}u = -\partial_{x}^{3}u + 6\p*{u^{2}-\int_{0}^{1} u^{2}\,\dx}\partial_{x}u
\end{equation}
referred to as the \rmKdV equation.

According to~\cite{Kappeler:2005gt}, for any initial datum $u\in H^{s} \equiv H^{s}(\T,\R)$ with $s\ge 0$, there exists a unique, global in time solution $u(t,x) = u(t,x,q)$ of~\eqref{mkdv}, $u\in C(\R,H^{s})$. In particular, for any time $t\in\R$ and $T > 0$, the nonlinear evolution operator
\[
  \Sc^{t} = \Sc(t,\cdot)\colon H^{s}\to H^{s},\qquad v\mapsto u(t,\cdot,v),
\]
and the uniquely defined solution map
\[
  \Sc\colon H^{s}\to C([-T,T],H^{s}),\qquad v\mapsto u(\cdot,\cdot,v),
\]
are well defined and continuous.

To investigate the wellposedness of mKdV and \rmKdV below $L^{2}$, we introduce for any $1\le p \le \infty$ the \emph{Fourier Lebesgue spaces} $\FL^{p}$ consisting of 1-periodic distributions whose Fourier coefficients are in $\ell^{p}$
\[
  \FL^{p} = \setdef{v\in S'(\T,\R)}{(\hat{v}_{n})\in\ell^{p}},\qquad \n{v}_{\FL^{p}} \defl \n{(\hat{v})}_{\ell^{p}},
\]
where $\hat{v}_{n}$ denotes the $n$th Fourier coefficient of $v$ in $S'(\T,\R)$.

For any $1\le p \le \infty$, the space $\FL^{p}$ is a real Banach space. Moreover, for any $2 < p < \infty$, we have the following continuous embeddings
\[
  \FL^{1} \opento C^{0} \opento L^{2} = \FL^{2} \opento L^{p'} 
  \opento
  \left\{
  \begin{aligned}
  L^{1} \opento \Ms\;\\
  L^{p'} \opento \FL^{p}
  \end{aligned}
  \right\}
  \opento \FL^{\infty},
\]
where $p'$ denotes the Lebesgue exponent conjugated to $p$ and $\Ms$ denotes the space of finite Borel measures on $\T$. The space $\FL^{1}$ is called the \emph{Wiener algebra} and $\FL^{\infty}$ is the space of \emph{pseudo measures}.
 We point out that for $2 < p < \infty$, the space $\FL^{p}$ is much larger than $L^{p'}$. In particular, it contains elements which are not measures but more singular distributions.

A continuous curve $\gm\colon (a,b)\to \FL^{p}$, $\gm(0) = v$, is called a \emph{solution} of an equation such as the mKdV equation in $\FL^{p}$ with initial datum $v$ if and only if for any sequence of $C^{\infty}$-potentials $(v_{k})_{k\ge 1}$ converging to $v$ in $\FL^{p}$, the corresponding sequence $(\Sc(t,v_{k}))_{k\ge 1}$ of solutions of~\eqref{mkdv} with initial data $v_{k}$ converges to $\gm(t)$ in $\FL^{p}$ for any $t\in (a,b)$.
mKdV is said to be \emph{locally in time $C^{0}$-wellposed} if for any initial datum $w\in \FL^{p}$ there exists a neighborhood $U$ and a time $T > 0$ so that the initial value problem~\eqref{mkdv} for any initial value $v\in U$ admits a solution $\Sc(\cdot,v)$ in the aforementioned sense which is defined on the time interval $[-T,T]$, and the solution map $\Sc\colon U\to C^{0}([-T,T],\FL^{p})$ is continuous.
mKdV is said to be \emph{globally in time $C^{0}$-wellposed on an open subset $U\subset \FL^{p}$} if for any initial datum $v\in U$ the initial value problem~\eqref{mkdv} admits a solution $\Sc(\cdot,v)$ in the aforementioned sense which is globally defined in time and the solution map $\Sc\colon U\to C([-T,T],\FL^{p})$ is continuous for every $T > 0$.
mKdV is said to be \emph{globally in time (uniformly/$C^{k}$/$C^{\om}$) wellposed on $U\subset\FL^{p}$} if the solution map $\Sc\colon U\to C([-T,T],\FL^{p})$ is (uniformly continuous/$C^{k}$/$C^{\om}$) for every $T > 0$.

\begin{thm}
\label{thm:wp-defocusing-mkdv}
The \rmKdV equation~\eqref{rmkdv} is locally in time $C^{0}$-wellposed in $\FL^{p}$ for any $2 < p < \infty$ and it is globally in time $C^{0}$-wellposed in $\FL^{p}$ in a  neighborhood of $0$. As a consequence, for any $2 < p < \infty$, the solution map $\Sc$ of the mKdV equation does not extend continuously to initial data in $\FL^{p}\setminus L^{2}$.\fish
\end{thm}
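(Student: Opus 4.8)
The plan rests on the fact that \eqref{mkdv} is an integrable PDE whose Hamiltonian lies in the defocusing NLS hierarchy; consequently \eqref{mkdv} and -- since \eqref{rmkdv} differs from it by a translation at the conserved rate $6\int_{0}^{1}u^{2}\,\dx$ -- also \eqref{rmkdv} become \emph{linear} in the Birkhoff coordinates $\Phi$ of defocusing NLS, restricted to the submanifold of real-valued potentials. Under $\Phi$ the \rmKdV\ flow acts by $I_{n}(t)=I_{n}(0)$ and $\theta_{n}(t)=\theta_{n}(0)+\omega_{n}^{\#}t$, with frequencies $\omega_{n}^{\#}=\omega_{n}^{\#}(I)$ depending on the actions alone. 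The decisive point is that the \rmKdV\ frequency is the \KdV\ one with its divergent part removed: schematically $\omega_{n}^{\#}=\omega_{n}-12\pi n\int_{0}^{1}u^{2}\,\dx$, and the subtraction is tuned exactly so that, while $\omega_{n}$ carries a term proportional to $n\sum_{m}I_{m}$ (finite on $L^{2}$, infinite on $\FL^{p}\setminus L^{2}$), the renormalized $\omega_{n}^{\#}$ stays finite on all of $\FL^{p}$. So the first step is to extend $\Phi$ to a real-analytic diffeomorphism between $\FL^{p}$ (on the real slice) and the corresponding model space $\mathfrak{m}^{p}$ for every $2\le p<\infty$ -- available from the existing theory of Birkhoff coordinates for NLS/\KdV\ on Fourier Lebesgue spaces -- and then to control the sequence $(\omega_{n}^{\#})$ on $\mathfrak{m}^{p}$.

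\textbf{Frequencies -- the crux.} The analysis of $(\omega_{n}^{\#})_{n\in\Z}$ is the heart of the matter and the step I expect to be the main obstacle. I would represent each $\omega_{n}^{\#}$ by an explicitly computable expression -- a combination of contour integrals over the spectral bands of the associated Zakharov--Shabat operator, with the divergent $n\sum_{m}I_{m}$ piece cancelled by construction (the ``novel representation'' of the abstract) -- and then read off: (i) that $\omega_{n}^{\#}$ extends real-analytically to all of $\mathfrak{m}^{p}$, $2\le p<\infty$, since every ingredient converges there even though $(I_{m})\notin\ell^{1}$; and (ii) asymptotics of the form $\omega_{n}^{\#}=(2\pi n)^{3}+\ell_{n}$ as $n\to\pm\infty$, with $(\ell_{n})$ lying in the weighted $\ell^{p}$-space associated with $\mathfrak{m}^{p}$, uniformly on bounded subsets of $\mathfrak{m}^{p}$. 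Proving (ii) uniformly on bounded sets -- where the $\ell^{1}$-decay of the actions that underpins the $L^{2}$-theory is no longer available -- is the delicate part, and is where the representation does the work.

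\textbf{Wellposedness.} With $\Phi$ and $(\omega_{n}^{\#})$ in hand, the \rmKdV\ solution map factors as $v\mapsto\Phi^{-1}\!\big(L_{t}\,\Phi(v)\big)$, where $L_{t}$ rotates the $n$th action--angle pair of $\mathfrak{m}^{p}$ by the angle $\omega_{n}^{\#}(I)\,t$. Since $L_{t}$ preserves the model-space norm exactly and, by the frequency asymptotics, depends continuously on $t$ and on the base point -- the high-frequency tail being controlled uniformly on balls for $|t|\le T$ -- and since $\Phi^{\pm1}$ are continuous, the composition is continuous from a ball in $\FL^{p}$ into $C^{0}([-T,T],\FL^{p})$ for a suitable $T$; an approximation argument with smooth data then shows it furnishes solutions in the sense of the statement, which gives local in time $C^{0}$-wellposedness. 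On a neighborhood of $0$ in $\mathfrak{m}^{p}$ the corrections $\omega_{n}^{\#}-(2\pi n)^{3}$ are uniformly small, so $L_{t}$ and the solution map are defined and continuous on every time interval, yielding the global result near $0$. That one obtains only $C^{0}$ and not uniform continuity or $C^{1}$ is intrinsic: $\theta_{n}\mapsto\theta_{n}+\omega_{n}^{\#}(I)t$ depends on the data through $I$, so for fixed large $|t|$ the flow map loses its Lipschitz bound in the high modes -- the familiar obstruction for integrable PDEs at low regularity.

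\textbf{Illposedness of \KdV.} For the final claim I would argue by contradiction using the elementary boost relation: if $u$ solves \eqref{mkdv} then $\mu:=\int_{0}^{1}u^{2}\,\dx$ is conserved and $v(t,x):=u(t,x-6\mu t)$ solves \eqref{rmkdv}; conversely the \KdV\ solution with smooth datum $v$ equals $v^{\#}(t,x+6\mu t)$, where $v^{\#}$ is the \rmKdV\ solution with datum $v$ and $\mu=\int_{0}^{1}v^{2}\,\dx$. Suppose $\Sc$ extended continuously to some $w\in\FL^{p}\setminus L^{2}$. Take the trigonometric polynomials $v_{k}:=\sum_{|n|\le k}\hat{w}_{n}e^{2\pi inx}\to w$ in $\FL^{p}$; then $\mu_{k}:=\n{v_{k}}_{L^{2}}^{2}=\sum_{|n|\le k}|\hat{w}_{n}|^{2}$ increases to $\n{w}_{L^{2}}^{2}=\infty$, while $\mu_{k+1}-\mu_{k}=|\hat{w}_{k+1}|^{2}+|\hat{w}_{-k-1}|^{2}\to0$ since $\hat{w}_{n}\to0$. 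By the first part, the \rmKdV\ solutions $v^{(k)}(t)$ with data $v_{k}$ converge in $\FL^{p}$ to the \rmKdV\ solution $v^{(\infty)}(t)$ with datum $w$; as solutions with $L^{2}$ data stay in $L^{2}$ and the flow is reversible, $v^{(\infty)}(t)\in\FL^{p}\setminus L^{2}$, so in particular it is not constant for any $t$. The corresponding \KdV\ solutions are $u_{k}(t,x)=v^{(k)}(t,x+6\mu_{k}t)$, i.e.\ $\widehat{u_{k}(t)}_{n}=e^{12\pi in\mu_{k}t}\,\widehat{v^{(k)}(t)}_{n}$. Fix $t_{0}\ne0$ and $n_{0}\ne0$ with $\widehat{v^{(\infty)}(t_{0})}_{n_{0}}\ne0$. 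Since $12\pi n_{0}\mu_{k}t_{0}\to\pm\infty$ with consecutive differences tending to $0$, the unit numbers $e^{12\pi in_{0}\mu_{k}t_{0}}$ have every point of the unit circle as a limit point and do not converge; as $\widehat{v^{(k)}(t_{0})}_{n_{0}}\to\widehat{v^{(\infty)}(t_{0})}_{n_{0}}\ne0$, the coefficient $\widehat{u_{k}(t_{0})}_{n_{0}}$ does not converge, so $u_{k}(t_{0})$ has no limit in $\FL^{p}\hookrightarrow\FL^{\infty}$. This contradicts the continuity of the extended $\Sc$ at $w$, and the theorem follows.
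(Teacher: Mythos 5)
Your overall strategy is the one the paper follows: conjugate the \rmKdV flow via the Birkhoff map of the NLS hierarchy, observe that the renormalization $\om_{n}^{\#}=\om_{n}-12n\pi\int_{0}^{1}u^{2}\,\dx$ removes exactly the part of the frequencies that diverges off $L^{2}$, extend the renormalized frequencies analytically to $\FL^{p}$ with asymptotics (the paper does this via the moments $\Om_{nk}^{(2)}$ and the identity $\om_{n}^{(4)\star}=-12\sum_{k}k\,\Om_{nk}^{(2)}$), and derive the illposedness of mKdV from the divergence of the renormalizing phases along smooth approximating sequences. Your truncation argument for the last point is a correct elaboration of what the paper dispatches in one line ($\Hm_{1}$ is infinite on $\Ec_{r}^{p}\setminus\Ec_{r}^{2}$), and your identification of the frequency asymptotics as the crux matches the paper's Sections 4--5.

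There is, however, one genuine gap. You assume that $\Phi$ extends to a real-analytic \emph{diffeomorphism} between $\FL^{p}$ (on the real slice) and the model space for every $2\le p<\infty$. This is false for $p>2$: Theorem~\ref{bhf} gives only that $\Phi_{p}\colon\FL_{r}^{p}\to\ell_{r}^{p}$ is injective and a local diffeomorphism whose image is \emph{open and dense}; surjectivity holds only at $p=2$. This is not a technicality -- it is the sole reason the theorem asserts local in time wellposedness in general and global wellposedness only near $0$. The linear flow $L_{t}$ rotates each torus $\{\abs{z_{n}^{\pm}}=\mathrm{const}\}$, and such a torus through a point of the image need not be contained in the image, so $\Phi_{p}^{-1}(L_{t}\Phi_{p}(v))$ may cease to exist after finite time. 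Local wellposedness follows because the image is open, so that $L_{t}\Phi_{p}(U)$ remains inside it for $\abs{t}\le T$ and $U$ small; global wellposedness near $0$ follows because a ball around the origin of $\ell_{r}^{p}$, which is invariant under $L_{t}$, lies in the image. Your stated explanation of the local/global dichotomy -- that the corrections $\om_{n}^{\#}-(2n\pi)^{3}$ are uniformly small near $0$ -- is not the operative mechanism; indeed, under your surjectivity assumption the argument you give would yield global wellposedness on all of $\FL^{p}$, which is more than the theorem claims and more than is known. Repairing the proposal requires replacing the surjectivity claim by the open-and-dense statement of Theorem~\ref{bhf} and inserting the containment argument $\Sc_{\Phi}^{\#}([-T,T],U)\subset\Phi_{p}(\Ec_{r}^{p})$ as in the paper's Corollary~\ref{cor:wp-mkdv}.
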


Our method also allows to prove partial corresponding results for the focusing modified Korteweg-de Vries (\fmKdV) equation on $\T = \R/\Z$
\[
  \partial_{t}u = -\partial_{x}^{3}u - 6u^{2}\partial_{x}u
\]
and its renormalized version
\[
  \partial_{t}u = -\partial_{x}^{3}u - 6\p*{u^{2}-\int_{0}^{1}u^{2}\,\dx}\partial_{x}u
\]
referred to as \rfmKdV equation. More precisely, for any $2 < p < \infty$, the \rfmKdV equation is globally in time $C^{0}$-wellposed in $\FL^{p}$ for initial data in a neighborhood of $0$, whereas the solution map of the \fmKdV equation does not extend continuously to small initial data in $\FL^{p}\setminus L^{2}$ for any $2 < p < \infty$.

Finally, using the Miura map one can deduce from corresponding results in~\cite{Kappeler:2016uj} for the KdV equation that for any $0 < s < 1/2$, the mKdV equation is nowhere locally uniformly $C^{0}$-wellposed on the submanifold
\[
  \setdef*{v\in H^{s}(\T)}{\int_{\T} v^{2}\,\dx = d}
\] 
for any $d > 0$.

\paragraph{Related results}
The wellposedness of the mKdV equation on $\T$ has been extensively studied - cf. e.g.~\cite[Introduction~1]{Molinet:2012il} for an account on the many results obtained so far. In particular, based on the seminal work of \citet{Bourgain:1993cl}, it was shown in~\cite{Colliander:2003fv} that the mKdV equation is globally uniformly $C^{0}$-wellposed on the submanifolds
\[
  M_{d}^{s} = \setdef*{v\in H^{s}(\T)}{\int_{\T} v^{2}\,\dx = d}
\]
with $s\ge 1/2$ and $d > 0$ arbitrary. In~\cite{Kappeler:2005gt} it was proved that the mKdV equation is globally $C^{0}$-wellposed in $H^{s}(\T)$ for any $s\ge 0$. The solutions constructed in~\cite{Kappeler:2005gt} are shown to be weak (in an appropriate sense) by~\cite{Molinet:2012il} (cf. also ~\cite{Nakanishi:2010ix,Takaoka:2004bg} for related results).
See also 
\cite{Grunrock:2004vw,Nguyen:2008vu} for results on the local wellposedness of the mKdV equation in Fourier Lebesgue spaces of higher regularity. On the other hand, it was pointed out in~\cite[Section~6]{Bourgain:1997gg} that the mKdV equation is not $C^{3}$-wellposed in $H^{s}(\T)$ for $s < 1/2$, whereas in~\cite{Christ:2003tx} it is proved that it is not uniformly $C^{0}$-wellposed in $H^{s}(\T)$ for $-1 < s < 1/2$. More recently, \citet{Molinet:2012il} proved that the mKdV equation is ill-posed on $H^{s}(\T)$ for any $s < 0$ in the sense that for any $T > 0$, the solution map $(C^{\infty}(\T),\n{\cdot}_{s}) \to \Dc'((0,T)\times \T)$ is discontinuous at any nonconstant initial datum $v\in H^{\infty}(\T)$. Here, $(C^{\infty}(\T),\n{\cdot}_{s})$ denotes the vector space $C^{\infty}(\T)$ endowed with the Sobolev norm $\n{\cdot}_{s}$.

\paragraph{Method of proof}

The mKdV equation is closely related to the \emph{\nls system}
\begin{equation}
\label{nls-sys}
\begin{split}
  \ii \partial_{t}\phm &= \phantom{-}\partial_{\php}H = -\partial_{xx}\phm + 2\php\phm^{2},\\
  \ii \partial_{t}\php &= -\partial_{\phm}H = \phantom{-}\partial_{xx}\php - 2\phm\php^{2}.
\end{split}
\end{equation}
This system can be viewed as a Hamiltonian PDE with Hamiltonian
\begin{equation}
  \label{nls-ham}
  H_{NLS}(\ph) = \int_{\T} (\partial_{x}\php\partial_{x}\phm + \php^{2}\phm^{2}) \,\dx
\end{equation}
on the phase space $H^{s}_{c} \defl H^{s}(\T,\C)\times H^{s}(\T,\C)$, $s\ge 0$, with Poisson bracket
\begin{equation}
  \label{nls-poi}
  \nbr{F,G} 
  \defl 
  -\ii\int_{\T} 
  (\partial_{\phm} F\, \partial_{\php} G 
    - \partial_{\php}F\,\partial_{\phm} G)\,\dx.
\end{equation}
Here $\phm$, $\php$ denote the two components of $\ph=(\phm,\php)\in H^{s}_{c}$, and $\partial_{\phm} F$, $\partial_{\php} F$ denote the two components of the $L^2$-gradient $\partial F=(\partial_{\phm}F,\partial_{\php}F)$ of a $C^1$-functional $F$ on $H_{c}^{s}$.

The Hamiltonian $\Hm_{NLS}(\ph)$ admits for any $s\ge 0$ the invariant real subspaces
\[
  H_{r}^{s} \defl \setdef{\ph\in H_{c}^{s}}{\php =  \ob{\phm}},\quad
  H_{i}^{s} \defl \setdef{\ph\in H_{c}^{s}}{\php = -\ob{\phm}}.
\]
Elements of $H_{r}^{s}$ are called potentials of \emph{real type}.

When \eqref{nls-sys} is restricted to $H_{r}^{s}$, with $\ph = (v,\ob{v})$, one obtains the \emph{defocusing NLS} (dNLS) equation
\[
  \ii \partial_{t}v = \ii\nbr{v,\Hm_{NLS}} = -\partial_{xx}v + 2\abs{v}^{2}v ,\qquad \Hm_{NLS}(v,\ob{v}) = \int_{\T} (\abs{v_{x}}^{2} + \abs{v}^{4}) \,\dx.
\]
Similarly, when \eqref{nls-sys} is restricted to $H_{i}^{m}$, with $\ph = (\ii v,\ii \ob{v})$, one obtains the \emph{focusing NLS} equation
\[
  \ii \partial_{t}v = \ii\nbr{v,\Hm_{NLS}} = -\partial_{xx}v - 2\abs{v}^{2}v ,\qquad
  \Hm_{NLS}(\ii v,\ii \ob{v}) = -\int_{\T} (\abs{v_{x}}^{2} - \abs{v}^{4}) \,\dx.
\]

The \nls system \eqref{nls-sys} admits an infinite sequence of recursively defined pairwise Poisson commuting integrals referred to as \emph{\nls hierarchy},
\begin{align*} 
  \Hm_{1}(\ph) &= \;\;\phantom{\ii}\int_{\T} \phm\php  \,\dx,\\
  \Hm_{2}(\ph) &= \frac{\ii}{2} \int_{\T} (\php\partial_{x}\phm - \phm\partial_{x}\php)\,\dx,\\
   \Hm_{NLS}(\ph) = \Hm_{3}(\ph)
    &= \;\;\phantom{\ii}\int_{\T} (\partial_{x}\phm\partial_{x}\php + \phm^{2}\php^{2}) \,\dx,\\
  \Hm_{4}(\ph) &= \;\;\ii\int_{\T} (\phm\partial_{xxx}\php - 3\phm^{2}\php\partial_{x}\php) \,\dx,
  \qquad
 \ldots
\end{align*}
In comparison with \cite{Grebert:2014iq}, the $n$th Hamiltonian of the \nls hierarchy (for $n\ge 2$) is multiplied by $(-\ii)^{n+1}$ to make the corresponding Hamiltonian flow real-valued for real-valued $\ph$.

The Hamiltonian $\Hm_{4}$ gives rise to the \emph{mKdV system}
\begin{equation}
\label{mkdv-sys}
\begin{split}
  \partial_{t} \phm &= \nbr{\phm,\Hm_{4}}
  					 = -\ii\partial_{\php}\Hm_{4}
					 = -\partial_{xxx}\phm + 6\phm\php\partial_{x}\phm,\\
  \partial_{t} \php &= \nbr{\php,\Hm_{4}}
  					 = \phantom{-}\ii\partial_{\phm}\Hm_{4}
					 = -\partial_{xxx}\php + 6\php\phm\partial_{x}\php.
\end{split}
\end{equation}
This system admits, for any $s\ge 0$, the real invariant subspaces
\[
  \Ec_{r}^{s} \defl \setdef{\ph \in H_{r}^{s}}{\php = \phm},\qquad
  \Ec_{i}^{s} \defl \setdef{\ph \in H_{i}^{s}}{\php = \phm}.
\]
When \eqref{mkdv-sys} is restricted to $\Ec_{r}^{s}$, with $\ph = (u,u)$ and $u$ real-valued, one obtains the defocusing mKdV equation
\[
  \partial_{t}u
   = -\ii \partial_{\php}H_{4}\big|_{(u,u)}
   = -\partial_{xxx}u + 6u^{2}\partial_{x} u.
\]
Similarly, when \eqref{mkdv-sys} is restricted to $\Ec_{i}^{s}$, with $\ph = (\ii u,\ii u)$ and $u$ real-valued, one obtains the focusing mKdV equation
\[
  \ii \partial_{t}u
   =  -\ii\partial_{\php}H_{4}\big|_{(\ii u,\ii u)}
   = \ii(-\partial_{xxx}u - 6u^{2}\partial_{x} u)
\]
or $\partial_{t}u = -\partial_{xxx}u - 6u^{2}\partial_{x} u$.

The NLS hierarchy is completely integrable in the strongest possible sense meaning that by~\cite{Grebert:2014iq} it admits global Birkhoff coordinates on $H_{r}^{s}$ for $s\ge 0$. They can be extended to the Fourier Lebesgue spaces. To give a precise statement, we introduce for any $2\le p < \infty$ the model space
\[
  \ell_{r}^{p} \defl \setdef{(z_{-},z_{+})\in \ell_{c}^{p}}{z_{+}=\ob{z_{-}}},\qquad
  \ell_{c}^{p} \defl \ell^{p}(\Z,\C)\times \ell^{p}(\Z,\C),
\]
and the phase space
\[
  \FL_{r}^{p} \defl \setdef{\ph = (\phm,\php)\in \FL_{c}^{p}}{\php=\ob{\phm}},\qquad
  \FL_{c}^{p} \defl \FL^{p}(\T,\C)\times \FL^{p}(\T,\C).
\]

\begin{thm}
\label{bhf}
For any $2 \le p < \infty$, there exists a complex neighborhood $\Ws^{p}\subset\FL_{c}^{p}$ of $\FL_{r}^{p}$ and an analytic map $\Ph_{p}\colon \Ws^{p}\to \ell_{c}^{p}$ with $\Ph_{p}(0) = 0$ so that the following holds:

\begin{equivenum}
\item
$\Phi_{p}\colon \Ws^{p}\to \ell_{c}^{p}$ is real analytic,

\item
for any $2\le p,q < \infty$, the maps $\Phi_{p}$ and $\Phi_{q}$ coincide on $\Ws^{p}\cap \Ws^{q}$,

\item
$\Phi_{p}$ is canonical,

\item the map $\Phi_{p}\colon \FL_{r}^{p}\to \ell_{r}^{p}$ is one-to-one, a local diffeomorphism at every point of $\FL_{r}^{p}$, and the image of this map is open and dense in $\ell_{r}^{p}$.

\item If $p=2$, then the map $\Phi_{p}$ coincides with the one constructed in~\cite{Grebert:2014iq}. In particular, it is onto and hence a bi-real analytic diffeomorphism.~\fish

\end{equivenum}
\end{thm}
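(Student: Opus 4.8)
The plan is to realise $\Phi_{p}$, exactly as in the $L^{2}$ theory of \cite{Grebert:2014iq}, through the spectral data of the Zakharov--Shabat (AKNS) operator attached to a potential $\ph=(\phm,\php)$,
\[
  L(\ph)\;=\;\begin{pmatrix}\ii & 0\\ 0 & -\ii\end{pmatrix}\frac{d}{dx}\;+\;\begin{pmatrix}0 & \phm\\ \php & 0\end{pmatrix},
\]
namely its periodic/antiperiodic eigenvalues $(\lambda_{n}^{\pm}(\ph))_{n\in\Z}$, its discriminant $\Delta(\cdot,\ph)$ and its Dirichlet eigenvalues $(\mu_{n}(\ph))_{n\in\Z}$, from which one builds action variables $I_{n}\ge 0$ (a contour integral formed from $\Delta$) and angle variables $\theta_{n}$ (defined through $\mu_{n}$ and the sign of $\dot\Delta(\mu_{n})$) and sets $z_{n}=\sqrt{2I_{n}}\,e^{\ii\theta_{n}}$, with the usual analytic smoothing across collapsed gaps. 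Everything then reduces to extending each ingredient of this recipe analytically from $H^{s}_{r}$ to a complex neighbourhood $\Ws^{p}$ of $\FL^{p}_{r}$ and to controlling its size and its $n\to\pm\infty$ asymptotics.

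For the analyticity and compatibility statements (items (i), (ii), (v)) I would first note that for $p>2$ a potential in $\FL^{p}_{r}$ need not lie in $L^{1}$, so the Zakharov--Shabat system must be set up for distributional potentials; I would do this on the Fourier side, expanding the fundamental solution $M(1,\lambda,\ph)$ in a perturbation series whose terms are iterated convolutions of $\widehat{\phm},\widehat{\php}$ weighted by the oscillatory phases $e^{\pm\ii\lambda\,\cdot}$, and exploiting the cancellation these phases carry to get local uniform convergence of the series in $\ph$ on a complex neighbourhood $\Ws^{p}$ of $\FL^{p}_{r}$. This makes $\Delta(\lambda,\cdot)$, the roots $\lambda_{n}^{\pm}$, the Dirichlet eigenvalues $\mu_{n}$ and the band integrals defining $I_{n},\theta_{n}$ analytic on $\Ws^{p}$, hence $\Phi_{p}$ analytic. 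Since $\Phi_{p}$ and $\Phi_{q}$ are continuous and agree with the map of \cite{Grebert:2014iq} on the dense set of smooth potentials of $\FL^{p}_{r}\cap\FL^{q}_{r}$, they agree there, and hence, by analytic continuation, on all of $\Ws^{p}\cap\Ws^{q}$; in particular $\Phi_{2}$ coincides with the map of \cite{Grebert:2014iq} on $L^{2}_{r}=\FL^{2}_{r}$, which is onto.

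The heart of the matter — and the step I expect to be the main obstacle — is showing that $\Phi_{p}$ actually takes values in $\ell^{p}_{c}$. What is needed is a two-sided bound of the $n$-th gap length $\gamma_{n}(\ph)=\lambda_{n}^{+}(\ph)-\lambda_{n}^{-}(\ph)$, and hence of $\abs{z_{n}(\ph)}$, by the single Fourier modes $\widehat{\phm}_{n},\widehat{\php}_{n}$, together with the asymptotics $\gamma_{n}=O(\abs{\widehat{\phm}_{n}}+\abs{\widehat{\php}_{n}})$ as $n\to\pm\infty$, all locally uniform on $\Ws^{p}$. For $L^{2}$-potentials such estimates are classical, but they are tuned to $L^{2}$ and break down for merely $\ell^{p}$-data; instead I would perform a further normal-form reduction of the series for $M(1,\lambda,\ph)$ near $\lambda\sim n\pi$ that isolates the single resonant Fourier mode and yields a representation of $\gamma_{n}$ (and of $z_{n}$) manifestly summable in $\ell^{p}$ and analytic in $\ph$. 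Summing over $n$ then gives $\Phi_{p}(\Ws^{p})\subset\ell^{p}_{c}$; and the reality constraint $\php=\ob{\phm}$ forces the spectrum and the Dirichlet data to be real and symmetric, so $\Phi_{p}$ maps $\FL^{p}_{r}$ into $\ell^{p}_{r}$.

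The remaining items (iii), (iv) I would handle by density and perturbation arguments. Canonicity holds on the dense subset $H^{s}_{r}$ by \cite{Grebert:2014iq} and is expressed through bilinear pairings of the gradients $\partial z_{n}$, which extend continuously to $\FL^{p}_{r}$, so it persists there. To see that $\Phi_{p}$ is a local diffeomorphism at every $v\in\FL^{p}_{r}$ I would show $d_{v}\Phi_{p}\colon\FL^{p}_{r}\to\ell^{p}_{r}$ is a compact (indeed smoothing) perturbation of a linear isomorphism, hence Fredholm of index $0$, and is injective by the Wronskian identities for the gradients $\partial z_{n}$ — the $L^{2}$ computation, now valid in $\FL^{p}$. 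Global injectivity on $\FL^{p}_{r}$ follows because two potentials with equal $\Phi_{p}$-image have the same periodic spectrum (hence lie on one isospectral set) and the same angles (hence coincide on it), or equivalently by constructing a local analytic inverse and matching it with the global inverse of $\Phi_{2}$ on the dense subset $L^{2}_{r}$. Finally, being a local diffeomorphism $\Phi_{p}$ is an open map, so its image is open; and since $\Phi_{p}|_{L^{2}_{r}}=\Phi_{2}$ is onto $\ell^{2}_{r}$, which is dense in $\ell^{p}_{r}$, the image is dense as well.
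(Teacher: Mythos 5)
Your proposal follows, in outline, the same route that the paper takes: the paper does not actually prove Theorem~\ref{bhf} in the text but assembles exactly the construction you describe in Section~\ref{s:preliminiaries} (periodic and Dirichlet spectra of the Zakharov--Shabat operator, actions $I_n$ as contour integrals of $\lambda\,\partial_\lambda\Delta/\sqrt[c]{\Delta^2-4}$, angles $\theta_n$ from the Dirichlet data, $z_n^{\pm}=\sqrt{I_n}\,e^{\mp\ii\theta_n}$ with analytic smoothing across collapsed gaps) and then defers all five items to \cite{Molnar:2016uq} and \cite{Grebert:2014iq}. Your closing arguments for items (ii)--(v) --- analytic continuation from the totally real subspace, Fredholm index zero plus injectivity of the differential for the local diffeomorphism property, openness of the image from that, and density because $\Phi_p|_{L^2_r}=\Phi_2$ is onto $\ell^2_r$, which is dense in $\ell^p_r$ --- are all consistent with the cited construction.

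There are, however, two places where your route either diverges from the paper's or leaves the essential difficulty unaddressed. First, for $2<p<\infty$ an element of $\FL^p$ need not be a measure, so the ODE defining the fundamental solution $M(x,\lambda,\varphi)$ has no classical meaning; the paper deliberately sidesteps this by defining the periodic spectrum operator-theoretically and then \emph{defining} the discriminant by the product formula~\eqref{dl-inf-prod}, never introducing $M$ for $p>2$. Your alternative --- an iterated-convolution series for $M(1,\lambda,\varphi)$ on the Fourier side --- is not routine: Young's inequality fails for $\ell^p\ast\ell^p$ when $p>2$, so every term of the series is only conditionally summable and all decay must be extracted from the oscillatory denominators; this is precisely the obstruction the product-formula definition is designed to avoid, and your proposal does not show how to overcome it. Second, the two analytic cores of the theorem are stated as intentions rather than proved: (a) the locally uniform bound $\gamma_n=\ell^p_n$ together with $4I_n/\gamma_n^2=1+o(1)$, which is what places $\Phi_p$ in $\ell^p_c$ (you announce a ``normal-form reduction'' but give no mechanism); and (b) canonicity, where the issue is not persistence under density but well-definedness --- on $\FL^p$ with $p>2$ the Poisson bracket~\eqref{nls-poi} of two generic functionals does not exist, so one must prove that the gradients $\partial z_n^{\pm}$ take values in the dual space $\FL^{p'}$ before any pairing can be passed to the limit. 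Relatedly, matching a local inverse with $\Phi_2^{-1}$ on the dense subset $L^2_r$ does not by itself exclude two distinct points of $\FL^p_r\setminus L^2_r$ sharing an image, so global injectivity also needs a genuine argument. None of these gaps is a wrong turn, but they are exactly the content that \cite{Molnar:2016uq} supplies and that a self-contained proof would have to provide.
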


For $1\le q\le \infty$ and $s\in\R$, let $\ell_{+}^{s,q}$ denote the positive quadrant of $\ell^{s,q}(\Z,\R)$ given by
\begin{equation}
  \label{ellp-pos-quad}
  \ell_{+}^{s,q} = \setdef{I=(I_{n})_{n\in\Z}\in \ell^{s,q}(\Z,\R)}{I_{n}\ge 0}.
\end{equation}
The Hamiltonian $\Hm_{4}$ is a real analytic function of $\ph$ on $H_{r}^{3/2}$ and hence a real analytic function of the actions on $\ell_{+}^{3,1}$ -- see \cite{Grebert:2014iq}. The corresponding frequencies
\[
  \om_{n}^{(4)} \defl \partial_{I_{n}}\Hm_{4},\qquad n\in\Z,
\]
thus are real analytic functions of the actions on $\ell_{+}^{3,1}$.

The equations of motion of the system~\eqref{mkdv-sys}, when expressed in Birkhoff coordinates on $\ell_{r}^{s,2}$ with $s \ge 3/2$, are
\[
  \partial_{t}z_{n}^{-} = -\ii \om_{n}^{(4)}z_{n}^{-},\quad
  \partial_{t}z_{n}^{+} = \ii \om_{n}^{(4)}z_{n}^{+},\quad
  n\in\Z.
\]

On $\ell_{+}^{3,1}$ the frequencies $\om_{n}^{(4)}$ have an asymptotic expansion for $\abs{n}\to \infty$ of the form
\begin{align}
  \label{exp-om-kdv1}
  \om_{n}^{(4)} &= (2n\pi)^{3} + 6\Hm_{2} + 12n\pi \Hm_{1} + O(n^{-1})
\end{align}
where according to~\cite{Grebert:2002wb,Grebert:2014iq}
\[
  \Hm_{1} = \sum_{n\in\Z} I_{n},\qquad \Hm_{2} = \sum_{n\in\Z} (2n\pi)I_{n}.
\]
In order to state our results on the analytic extensions of $\om_{n}^{(4)}$, $n\in\Z$, we need to normalize the frequencies as follows
\begin{equation}
  \label{omn-4-star}
  \om_{n}^{(4)\star} = \om_{n}^{(4)} - (2n\pi)^{3} - 6\Hm_{2} - 12n\pi\Hm_{1}.
\end{equation}

By a slight abuse of notation, in the sequel, we will often view the frequencies as functions of the potential. From the considerations above it follows that $\oa^{(4)\star} = (\oa_{n}^{(4)\star})_{n\in\Z} \colon H_{r}^{3/2}\to \ell_{\R}^{-3,\infty}$, referred to as the frequency map, is real analytic.

\begin{thm}
\label{thm:nls-freq}
The map $\oa^{(4)\star}$ is defined on $H_{r}^{0}=\FL_{r}^{2}$, takes values in $\bigcap_{r > 1} \ell^{-1,r}$,
and $\oa^{(4)\star}\colon \FL_{r}^{2}\to \ell^{-1,r}$ is real analytic for any $r > 1$.
Moreover, for any $p> 2$ the map $\oa^{(4)\star}$ admits a real analytic extension $\oa^{(4)\star}\colon \FL_{r}^{p}\to \ell_{\C}^{-1,p/2}$ with asymptotics
\[
  \oa_{n}^{(4)\star} + 12n\pi I_{n}
  = n(\ell_{n}^{p/3} + \ell_{n}^{1+}),
\]
which hold locally uniformly on $\FL_{r}^{p}$.
Here $\ell_{n}^{p/3}$ denotes a generic sequence $(a_{m})_{m\in\Z}$ with $\sum_{m\in\Z} \abs{a_{m}}^{p/3} < \infty$ and $\ell_{n}^{1+}$ denotes a generic sequence $(b_{m})_{m\in\Z}$ with $\sum_{m\in\Z} \abs{b_{m}}^{r} < \infty$ for any $r > 1$.~\fish
\end{thm}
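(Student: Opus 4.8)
The plan is to go back to the classical period-integral representation of the frequencies on a space where all terms are finite, isolate the three divergent terms of the expansion~\eqref{exp-om-kdv1}, and then recast the renormalized quantity $\oa_{n}^{(4)\star}$ as a single, manifestly finite expression which is analytic on $\FL_{r}^{p}$---rather than as a difference of three quantities each of which blows up below $L^{2}$.

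First I would recall that $\Hm_{4}$ is generated from the expansion at $\lambda=\infty$ of a spectral function of the Zakharov--Shabat operator $L(\ph)$ attached to $\ph=(\phm,\php)$, so that on $H_{r}^{3/2}$ the frequency $\oa_{n}^{(4)}=\partial_{I_{n}}\Hm_{4}$ has the standard representation $\oa_{n}^{(4)}=\frac{1}{\pi}\int_{\Gamma_{n}}\lambda^{3}\psi_{n}(\lambda)\,d\lambda$, where $\Gamma_{n}$ encircles the $n$th spectral band and $\psi_{n}$ is the normalized differential of the second kind, given by an explicit product over the periodic eigenvalues $\lambda_{k}^{\pm}$ and the auxiliary (Dirichlet-type) eigenvalues, divided by an appropriate branch of $\sqrt{\Delta(\lambda)^{2}-4}$. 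Expanding $\lambda^{3}\psi_{n}$ near $\Gamma_{n}$ reproduces~\eqref{exp-om-kdv1}, the terms $(2n\pi)^{3}$, $6\Hm_{2}$ and $12n\pi\Hm_{1}$ being the first three orders, with $\Hm_{1}=\sum_{k}I_{k}$ and $\Hm_{2}=\sum_{k}(2k\pi)I_{k}$ the relevant $n$-independent spectral sums. The crucial step is then to rewrite $\Hm_{1}$ and $\Hm_{2}$ themselves as period integrals---each $I_{k}$ being one---and to exhibit the exact cancellation between their contribution and the corresponding parts of $\frac{1}{\pi}\int_{\Gamma_{n}}\lambda^{3}\psi_{n}\,d\lambda$; feeding in the product formula for $\Delta^{2}-4$, the product representation of $\psi_{n}$, and the $\ell^{p}$-version of the Birkhoff map $\Phi_{p}$ from Theorem~\ref{bhf}, one arrives at an absolutely and locally uniformly convergent expression for $\oa_{n}^{(4)\star}$ in terms of the Birkhoff coordinates $z_{k}^{\pm}=z_{k}^{\pm}(\ph)$ and the regularized gap data, valid on the complex neighborhood $\Ws^{p}$ of $\FL_{r}^{p}$. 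This representation is the novel ingredient announced in the introduction, and producing it---so that the three divergences cancel pointwise in $n$, not merely term by term---is the main obstacle.

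With the representation in hand, the asymptotics follow by estimating its ingredients. Inserting the product formulas and the known asymptotics of $\lambda_{k}^{\pm}$ and $z_{k}^{\pm}$, the dominant contribution to $\oa_{n}^{(4)\star}$ comes from the $n$th factor and equals $-12n\pi I_{n}$ up to lower-order analytic corrections, the prefactor $-12n\pi$ being precisely what survives the subtraction of the normalization when $\lambda^{3}$ is evaluated near the $n$th band and $I_{n}=\abs{z_{n}^{-}}^{2}$ is the weight of the $n$th gap. All remaining contributions are off-diagonal: since the weighting $\lambda^{3}$ couples at most three factors of the $\ell^{p}$-sequence $(z_{k}^{\pm})$, one of which is the diagonal term $z_{n}^{\pm}$ and the other two are summed against rapidly decaying weights, H\"older's inequality bounds the worst of them, after division by $n$, by a sequence in $\ell_{n}^{p/3}$; the genuine error terms---arising from the difference between the regularized spectral data and their free counterparts, which decay faster than any $\ell^{p}$-rate permits---lie in $\ell_{n}^{1+}$. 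This yields $\oa_{n}^{(4)\star}+12n\pi I_{n}=n(\ell_{n}^{p/3}+\ell_{n}^{1+})$ locally uniformly on $\FL_{r}^{p}$, which is consistent with~\eqref{exp-om-kdv1} on bounded subsets of $H_{r}^{3/2}$. Specializing to $p=2$, where $I\in\ell^{1}$ and $\ell^{2/3}\subset\ell^{1}\subset\ell^{1+}$, shows that $\oa^{(4)\star}$ takes values in $\bigcap_{r>1}\ell^{-1,r}$.

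Analyticity is then a soft consequence. Every factor in the representation is analytic on $\Ws^{p}$---the periodic eigenvalues and the entire functions $\Delta$ and $\psi_{n}$ extend analytically from $H_{r}^{3/2}$ to $\FL_{c}^{p}$, and $\Phi_{p}$ is analytic by Theorem~\ref{bhf}---so each component $\oa_{n}^{(4)\star}$ is analytic, and the estimates above show that the full sequence converges locally uniformly in $\ell_{\C}^{-1,p/2}$, and in $\ell^{-1,r}$ for every $r>1$ when $p=2$; hence $\oa^{(4)\star}$ is analytic into the stated sequence space by the usual criterion for Banach-space-valued maps, and its restriction to $\FL_{r}^{p}$ is real analytic. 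Compatibility with the map originally defined on $H_{r}^{3/2}$ follows from uniqueness of analytic continuation, the two maps coinciding on the dense subspace $H_{r}^{\infty}$; in particular $\oa^{(4)\star}$ is now defined on all of $H_{r}^{0}=\FL_{r}^{2}$, even though $\Hm_{1}$, $\Hm_{2}$ and $\oa_{n}^{(4)}$ separately cease to be finite below $L^{2}$. Apart from the construction of the representation, the delicate points are the sharp exponent $p/3$---which forces one to organize the multilinear $\ell^{p}$-estimate according to how the three powers of $\lambda$ are distributed---and the borderline $p=2$, where the full $\bigcap_{r>1}\ell^{r}$-regularity must be squeezed out of the $\ell^{1}$-summability of the actions.
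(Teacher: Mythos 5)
Your outline correctly identifies the overall strategy---recast the frequencies spectrally so that the three divergent terms of~\eqref{exp-om-kdv1} cancel before leaving $L^{2}$, then split off a diagonal contribution $-12n\pi I_{n}$ and estimate the rest---but the two steps that carry the actual weight are missing, and your proposed starting identity is not correct. A single period integral over $\Gm_{n}$ of the form $\frac{1}{\pi}\int_{\Gm_{n}}\lm^{3}\psi_{n}(\lm)\,\dlm$ cannot represent $\om_{n}^{(4)}$: already for $\Hm_{2}=\sum_{m}(2m\pi)I_{m}$ the analogous expression $\frac{2}{2\pi}\int_{\Gm_{n}}\frac{\lm\,\psi_{n}(\lm)}{\sqrt[c]{\Dl^{2}(\lm)-4}}\,\dlm$ evaluates (when $\gm_{n}=0$, using the normalization~\eqref{psi-int-eqn} and $\vs_{n}(\lm)=\tau_{n}-\lm$) to $2\tau_{n}$ rather than the exact value $\om_{n}^{(2)}=2n\pi$. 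The correct representation comes from a contour at infinity---where the Laurent expansion of the abelian integral $F^{4}$ exhibits $\Hm_{4}-6\Hm_{1}\Hm_{2}$ as a single residue (Corollary~\ref{cor:F3-F4})---deformed onto \emph{all} gaps; expanding $F^{3}=(F_{k}-\ii k\pi)^{3}$ on each $\Gm_{k}$ and using that odd powers of $F_{k}$ integrate to zero across $G_{k}$ (Lemma~\ref{Om-nk-prop}) leaves exactly $\om_{n}^{(4)\star}=-12\sum_{k\in\Z}k\Om_{nk}^{(2)}$ (Lemma~\ref{om-Om-4}). This bi-infinite sum of moments, not a single period integral and not an expression in the Birkhoff coordinates $z_{k}^{\pm}$, is the novel representation; you explicitly defer its construction as ``the main obstacle'' and then proceed as if it were available, so the proof assumes its own key ingredient.

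The second gap is quantitative. Even granting the representation, the stated asymptotics require the off-diagonal bound $\Om_{nk}^{(2)}=\gm_{k}^{3}\ell_{k}^{1+}[n]/(n-k)$ together with the diagonal identity $\Om_{nn}^{(2)}=\frac{\gm_{n}^{2}}{4}(\pi+\ell_{n}^{p/2}+\ell_{n}^{1+})$ and $4I_{n}/\gm_{n}^{2}=1+\ell_{n}^{p/2}+\ell_{n}^{1+}$. The $\ell^{1+}$ gain in the off-diagonal estimate is \emph{not} a consequence of H\"older's inequality applied to three $\ell^{p}$ factors; it rests on the refined localization $\sg_{k}^{n}-\lm_{k}^{\ld}=\gm_{k}\ell_{k}^{1+}[n]$ of Proposition~\ref{prop:sg-lm}, proved by a bootstrap that iterates the a priori bound $\gm_{k}^{2}\ell_{k}^{p}[n]$ through the exponents $q_{j+1}=pq_{j}/(p+q_{j})$ until they drop below $1$, and on the corresponding refinement $F_{n}(\lm)-\ii\vs_{n}(\lm)=\gm_{n}(\ell_{n}^{p/2}+\ell_{n}^{1+})$ of Lemma~\ref{Fn-refined}. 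Your assertion that the error terms ``decay faster than any $\ell^{p}$-rate permits'' is precisely the statement that must be proved and is the hardest part of the argument; without it one only obtains $\ell^{p/2}$-type remainders, which are too weak to yield the $\ell_{n}^{1+}$ term in the theorem and, at the borderline $p=2$, to place $\om^{(4)\star}$ in $\bigcap_{r>1}\ell^{-1,r}$.
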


Theorem~\ref{thm:wp-defocusing-mkdv} then follows by observing that the frequencies $\om_{n}^{\#}$ of the \rmKdV equation are obtained from the restriction of $\om_{n}^{(4)\star}$ to
$
  \Ec_{r}^{p} \defl \setd{\ph = (u,u)\in\FL_{r}^{p}}
$
as follows,
\[
  \om_{n}^{\#} \defl (2n\pi)^{3} + \om_{n}^{(4)\star}\Big|_{\Ec_{r}^{p}}.
\]
See Section~\ref{s:wp} for details.


\section{Preliminaries}
\label{s:preliminiaries}

Let us briefly recall the main properties of the Lax-pair formulation of the NLS system. For a potential $\ph=(\phm,\php)\in \FL_{c}^{p}$, $2 \le p < \infty$, consider the Zakharov-Shabat operator
\[
  L(\ph) \defl
  \mat[\bigg]{\,\ii & \\  & -\ii} 
  \frac{\ddd}{\dx} +
  \mat[\bigg]{ & \phm \\ \php & }
\]
on the interval $[0,2]$ with periodic boundary conditions. The spectral theory of this operator is well known in the case $p=2$ and has been extended to the case $p > 2$ in \cite{Molnar:2016uq}. One can show that the spectrum is discrete, and consists of a sequence of pairs of complex eigenvalues $\lm_n^+(\ph)$ and $\lm_n^-(\ph)$, when listed with algebraic multiplicities, such that
\[
  \lm_n^\pm(\ph) = n\pi + \ell^p_n,\qquad n\in\Z.
\]
Here $\ell_n^p$, $n\in\Z$, denotes a generic $\ell^p$-sequence. We may order the eigenvalues lexicographically -- first by their real part, and second by their imaginary part -- to represent them as a bi-infinite sequence of complex eigenvalues
\[
  \dotsb \lex \lambda_{n-1}^+ \lex \lambda_{n}^- \lex \lambda_{n}^+ \lex \lambda_{n+1}^- \lex \dotsb.
\]
By a slight abuse of terminology, we call the periodic eigenvalues of $L(\ph)$ the periodic spectrum of $\ph$. Further we introduce the gap lengths
\[
  \gm_n(\ph) \defl \lm_n^+ - \lm_n^- = \ell^p_n,\qquad n\in\Z,
\]
and the mid points
\[
  \tau_{n}(\ph) \defl \frac{\lm_{n}^{+}+\lm_{n}^{-}}{2} = n\pi + \ell_{n}^{p},\qquad n\in\Z.
\]

In the case of an $L^{2}_{c}$ potential, the periodic spectrum can be described in terms of the fundamental solution of $L(\ph)$ which is denoted by $M(x,\lm,\ph)$. In more detail, when one introduces the discriminant
\[
  \Dl(\lm,\ph) \defl  \operatorname{tr} M(1,\lm,\ph),
\]
then the periodic spectrum of $\ph$ is precisely the zero set of the entire function $\Dl^2(\lm,\ph) - 4$, and we have the product representation
\[
  \Dl^2(\lm,\ph) - 4
   = 
  -4\prod_{n\in\Z}
  \frac{(\lm_n^+-\lm)(\lm_n^--\lm)}{\pi_n^2},
  \qquad
  \pi_n
   \defl 
  \begin{cases}
  n\pi, & n\neq 0,\\
  1, & n=0.
  \end{cases}
\]
Further, $\Dl(\lm_{2n}^{+},\ph) = \Dl(\lm_{2n}^{-},\ph) = 2$, $\Dl(\lm_{2n+1}^{+},\ph) = \Dl(\lm_{2n+1}^{-},\ph) = -2$, for any $n\in\Z$, and
\begin{equation}
  \label{dl-inf-prod}
  \Dl(\lm,\ph)-2 = -\prod_{n\in\Z} \frac{(\lm_{2n}^+-\lm)(\lm_{2n}^--\lm)}{\pi_{2n}^2}.
\end{equation}

In the case of an $\FL_{c}^{p}$ potential with $2 < p <\infty$, we \emph{define} the discriminant by~\eqref{dl-inf-prod} so that we do not need to introduce the fundamental solution for such potentials. One can further show that in this case the $\lm$-derivative $\dDl\defl\partial_{\lm}\Dl$, whose zeros are denoted by $\lm_{n}^{\ld}$, $n\in\Z$, and are listed so that they have the asymptotics $\lm_{n}^{\ld} = n\pi + \ell^{p}_{n}$, admits the product representation
\[
  \dDl(\lm,\ph) = 2\prod_{n\in\Z} \frac{\lm_{n}^{\ld}-\lm}{\pi_{n}},
\]
as is known from the $L^{2}_{c}$ case. The asymptotics of $\lm_{n}^{\ld}$ and $\tau_{n}$ imply that $\lm_{n}^{\ld}-\tau_{n} = \ell_{n}^{p}$. As it turns out, $\lm_{n}^{\ld}$ is closer even to $\tau_{n}$, namely one has
\begin{equation}
  \label{lmld-tau-2}
  \lm_{n}^{\ld} = \tau_{n} + \gm_{n}^{2}\ell_{n}^{p},
\end{equation}
locally uniformly on $\FL_{c}^{p}$ for any $2 \le p < \infty$.

We also consider the Dirichlet spectrum of the Zakharov-Shabat operator -- see~\cite{Grebert:2014iq} for the definition in the case $p=2$ and~\cite{Molnar:2016uq} for an extension to the case $2 < p < \infty$. This spectrum, referred to as the \emph{Dirichlet spectrum of $\ph$}, is discrete and given by a sequence of eigenvalues $(\mu_{n})_{n\in\Z}$, counted with multiplicities and ordered lexicographically so that
\[
  \dotsb\lex \mu_{n-1} \lex \mu_{n} \lex \mu_{n+1} \lex \dotsb,\qquad \mu_{n}  = n\pi + \ell_{n}^{p}.
\]

If the potential $\ph$ is of real type, that is $\phm = \ob{\php}$, then periodic and Dirichlet spectra are real, and the lexicographical ordering coincides with the ordering of real numbers
\[
  \dotsb
   \le \lm_{n-1}^{+} < \lm_{n}^{-} \le \lm_{n}^{\ld},\mu_{n} \le \lm_{n}^{+} < \lm_{n+1}^{-}
   \le \dotsb.
\]
For each $\ph\in \FL_{r}^{p}$, there exists an open and connected neighborhood $V_{\ph}$ of $\ph$ within $\FL_{c}^{p}$ and disjoint closed discs $(U_{n})_{n\in\Z}$, centered on the real axis, so that the following holds
\begin{enumerate}[label=(S\arabic{*})]
\item
\label{iso-1}
 for any $n\in\Z$, $G_{n} \defl [\lm_{n}^{-},\lm_{n}^{+}]$, $\mu_{n}$, and $\lm_{n}^{\ld}$ are contained in the interior of $U_{n}$ for every $\psi\in V_{\ph}$;

\item there exists a constant $c \ge 1$ such that for all $n,m\in\Z$ with $m\neq n$
\begin{equation}
  \label{iso-est}
  c^{-1}\abs{m-n} \le \dist(U_{n},U_{m}) \le c\abs{m-n};
\end{equation}

\item
\label{iso-3}
there exists an integer $N_{\ph}\ge 1$ so that
\begin{equation}
  \label{Un-Dn}
  U_{n} = D_{n} \defl \setdef{\lm\in\C}{\abs{\lm - n\pi} \le \pi/5},\qquad \abs{n}\ge N_{\ph}.
\end{equation}
\end{enumerate}
Such discs $(U_{n})_{n\in\Z}$ are called isolating neighborhoods in the sequel. The union of all $V_{\ph}$, $\ph\in\FL_{r}^{p}$, defines an open and connected neighborhood of $\FL_{r}^{p}$ within $\FL_{c}^{p}$ and is denoted by $\Ws^{p}$. In the sequel, we denote for any $\ph\in\Ws^{p}$ an open and connected neighborhood $V_{\ph}$ satisfying the above conditions.

Following~\cite{Grebert:2014iq}, for $\ph \in \Wp^{p}$, one can define action variables for the NLS equation by
\begin{equation}
  \label{action}
  I_n
   =
  \frac{1}{\pi}\int_{\Gm_n}
  \frac{\lm \dDl(\lm)}{\sqrt[c]{\Dl^2(\lm)-4}}
  \,\dlm,\qquad n\in\Z.
\end{equation}
Here  $\Gm_{n}\subset U_{n}$ denotes any counter clockwise oriented circuit around and sufficiently close to $G_{n}$, and the \emph{canonical root} $\sqrt[c]{\Dl^{2}(\lm)-4}$ is defined on $\C\setminus \bigcup_{\gm_{n}\neq 0} G_{n}$, where, for $\ph$ of real type, the sign of the root is determined by
\[
  \ii\sqrt[c]{\Dl^{2}(\lm)-4} > 0,\qquad \lm_{0}^{+} < \lm < \lm_{1}^{-} \ ,
\]
and for $\ph\in\Wp^{p}$ it is defined by continuous extension.
According to \cite{Molnar:2016uq,Grebert:2014iq}, for any given $k\in\Z$ the action $I_{k}$ is a real analytic function on $\Wp^{p}$.

The  Dirichlet eigenvalues and the discriminant can be used to construct the angles $\th_{k}(\ph)$, $k\in\Z$, which are conjugated to the actions $I_{n}(\ph)$, $n\in\Z$. The angle $\th_{k}$ is defined modulo $2\pi$ on $\Wp^{p}\setminus Z_{k}$ and is a real analytic function on $\Wp^{p}\setminus Z_{k}$ when considered modulo $\pi$, where
\begin{equation}
  \label{Zk}
  Z_{k}\equiv Z_k^{p} \defl \setdef{\psi\in \Wp^{p}}{\gm_{k}^{2}(\psi) = 0}.
\end{equation}
Moreover, the following commutator relations hold for any $m,n\in\Z$
\begin{align}
  \label{e:commutators0}
  \pbr{I_m,I_n} = 0,\qquad
  \pbr{\th_n,I_{m}} = \dl_{nm},\qquad
  \pbr{\th_m,\th_n} = 0,
\end{align}
whenever the bracket is defined.

For any $\ph\in\FL_{r}^{p}\setminus Z_{k}$ with $k\in\Z$ define
\begin{equation}
  \label{e:z_k}
  z_{k}^{-}(\ph) \defl \sqrt[+]{I_k(\ph)}\, \e^{-\ii \th_k(\ph)},\qquad
  z_{k}^{+}(\ph) \defl \sqrt[+]{I_k(\ph)}\, \e^{\ii \th_k(\ph)}.
\end{equation}
(We point out that in \cite{Grebert:2014iq} the meaning of $z_{k}^{\pm}$ is a different one.)
It is shown in \cite{Molnar:2016uq,Grebert:2014iq} that the mappings $\FL_{r}^{p}\setminus Z_k\to{\C}$, $\ph\mapsto z_{k}^{\pm}(\ph)$ analytically extend to a neighborhood $\Wp^{p}$, (after possibly shrinking $\Ws^{p}$ if necessary).
The {\em Birkhoff map} is then defined as follows
\begin{equation}
  \label{Phi.def}
  \Phi_{p}\colon \Wp^{p} \to  \ell_{c}^{p} \quad
  \ph \mapsto \Phi_{p}(\ph) \defl \p*{z_{k}^{-}(\ph),z_{k}^{+}(\ph)}_{k\in\Z}
\end{equation}
Its main properties are stated in Theorem~\ref{bhf}.

\section{The abelian integral $F$}
\label{s:F-properties}

It is convenient to define for any $\ph\in\Ws^{p}$, $2 \le p < \infty$, the \emph{standard root}s
\[
  \vs_{n}(\lm) = \sqrt[\mathrm{s}]{(\lm_{n}^{+}-\lm)(\lm_{n}^{-}-\lm)},
                 \qquad \lm\in\C\setminus G_{n},\qquad n\ge 1,
\]
by the condition
\begin{align}
  \label{s-root}
  \vs_{n}(\lm) = (\tau_{n}-\lm)\sqrt[+]{1 - \gm_{n}^{2}/4(\tau_{n}-\lm)^{2}},
  						 \qquad \tau_{n} = (\lm_{n}^{-}+\lm_{n}^{+})/2.
\end{align}
Here $\sqrt[+]{\phantom{a}}$ denotes the principal branch of the square root on the complex plane minus the ray $(-\infty,0]$. The standard root is analytic in $\lm$ on $\C\setminus G_{n}$ and in $(\lm,\psi)$ on $(\C\setminus \ob{U_{n}})\times V_{\ph}$, and for all $n,m\ge 1$ with $n\neq m$ where $V_{\ph}$ and $c$ are as in~\ref{iso-1}-\ref{iso-3} above,
\begin{equation}
  \label{s-root-est}
  \inf_{\lm\in U_{n}}\abs{\vs_{m}(\lm)} \ge c^{-1}\abs{n-m}.
\end{equation}
If $\gm_{n} = 0$, then $\vs_{n}(\lm) = (\tau_{n}-\lm)$ is an entire function of $\lm$.
On the other hand, if $\gm_{n}\neq 0$, then $\vs_{n}$ extends continuously to both sides of $G_{n}$, denoted by $G_{n}^{\pm}$,
\begin{equation}
  \label{Gn-sides}
  G_{n}^{\pm} = \setdef{\lm_{t}^{\pm} = \tau_{n} + (t \pm \ii 0)\gm_{n}/2}{-1\le t\le 1},
\end{equation}
and we have
\begin{equation}
  \label{s-root-sides}
  w_{n}(\lm_{t}^{\pm}) = \mp \ii \frac{\gm_{n}}{2}\sqrt[+]{1-t^{2}},\qquad -1\le t\le 1.
\end{equation}

\begin{lem}
\label{s-root-reciprocal-estimate}
\label{int-wm-quot-est}
\begin{equivenum}
\item
Suppose $\gm_{n}\neq 0$ and $f$ is continuous on $G_{n}$, then
\[
  \sup_{\lm\in G_{n}^{+}\cup G_{n}^{-}}
  \abs*{\frac{1}{\pi}\int_{\lm_{n}^{-}}^{\lm} \frac{f(z)}{w_{n}(z)}\,\dz}
  \le
  \max_{\lm\in G_{n}} \abs{f(\lm)}.
\]
\item
Suppose $f$ is analytic in a neighborhood of $G_{n}$ containing $\Gm_{n}$, then
\[
  \frac{1}{2\pi}\abs*{\int_{\Gm_{n}} \frac{f(\lm)}{\vs_{n}(\lm)}\,\dlm} \le
  \max_{\lm\in G_{n}}\abs{f(\lm)}.
\]

\item
For any $n,m\in\Z$ we have
\[
  \frac{1}{2\pi\ii}\int_{\Gm_{m}} \frac{\dlm}{\vs_{n}(\lm)} = -\dl_{m,n}.\fish
\]
\end{equivenum}

\end{lem}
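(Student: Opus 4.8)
The plan is to reduce all three estimates, via the explicit description of $G_{n}$ in \eqref{Gn-sides} and the boundary values of $\vs_{n}$ on $G_{n}^{\pm}$ in \eqref{s-root-sides}, to the single elementary identity $\frac{1}{\pi}\int_{-1}^{1}(1-t^{2})^{-1/2}\,dt = 1$; the only step of a genuinely different nature is the case $m\neq n$ of (iii), which I would settle by a direct application of Cauchy's theorem, since $1/\vs_{n}$ is holomorphic away from $G_{n}$. For part (i) I would fix $\lm = \lm_{s}^{+}\in G_{n}^{+}$ with $s\in[-1,1]$, take the path from $\lm_{n}^{-}=\lm_{-1}^{+}$ to $\lm$ along $G_{n}^{+}$, parametrize $z=\lm_{t}^{+}$ with $t\in[-1,s]$ so that $dz=(\gm_{n}/2)\,dt$, and insert $\vs_{n}(\lm_{t}^{+}) = -\ii(\gm_{n}/2)\sqrt[+]{1-t^{2}}$ from \eqref{s-root-sides}. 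Writing $\lm_{t}:=\tau_{n}+t\gm_{n}/2\in G_{n}$, this turns the integral into
\[
  \frac{1}{\pi}\int_{\lm_{n}^{-}}^{\lm}\frac{f(z)}{\vs_{n}(z)}\,dz
  = \frac{\ii}{\pi}\int_{-1}^{s}\frac{f(\lm_{t})}{\sqrt[+]{1-t^{2}}}\,dt ,
\]
which converges absolutely (its only singularity, at $t=\pm1$, being of the integrable order $(1-t^{2})^{-1/2}$) and whose modulus is at most $\frac{1}{\pi}\max_{\lm\in G_{n}}\abs{f(\lm)}\int_{-1}^{1}(1-t^{2})^{-1/2}\,dt = \max_{\lm\in G_{n}}\abs{f(\lm)}$. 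The computation on $G_{n}^{-}$ is identical, now using $\vs_{n}(\lm_{t}^{-}) = +\ii(\gm_{n}/2)\sqrt[+]{1-t^{2}}$, and gives the same bound, so the estimate is uniform over $G_{n}^{+}\cup G_{n}^{-}$.

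For part (ii), since $f$ is holomorphic in a neighbourhood of $G_{n}$ containing $\Gm_{n}$ and $1/\vs_{n}$ is holomorphic on $\C\setminus G_{n}$, I would use Cauchy's theorem to collapse the counterclockwise circuit $\Gm_{n}$ onto the two sides $G_{n}^{\pm}$ of the slit $G_{n}$; the small arcs around the endpoints $\lm_{n}^{\pm}$ contribute nothing in the limit, since $1/\vs_{n}$ has there only an integrable square-root singularity. As $f$ has a single value across $G_{n}$ whereas $\vs_{n}(\lm_{t}^{-}) = -\vs_{n}(\lm_{t}^{+})$, and the two sides are traversed in opposite directions, the two boundary integrals reinforce, and the computation of part (i) yields
\[
  \int_{\Gm_{n}}\frac{f(\lm)}{\vs_{n}(\lm)}\,\dlm
  = -2\ii\int_{-1}^{1}\frac{f(\lm_{t})}{\sqrt[+]{1-t^{2}}}\,dt .
\]
Dividing by $2\pi$ and estimating exactly as before gives the claimed bound $\frac{1}{2\pi}\abs*{\int_{\Gm_{n}}f(\lm)\vs_{n}(\lm)^{-1}\,\dlm}\le\max_{\lm\in G_{n}}\abs{f(\lm)}$.

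For part (iii) I would distinguish two cases. If $m=n$ and $\gm_{n}\neq0$, putting $f\equiv1$ in the last display gives $\int_{\Gm_{n}}\vs_{n}(\lm)^{-1}\,\dlm = -2\ii\int_{-1}^{1}(1-t^{2})^{-1/2}\,dt = -2\pi\ii$, hence $\frac{1}{2\pi\ii}\int_{\Gm_{n}}\vs_{n}(\lm)^{-1}\,\dlm = -1$; if $m=n$ and $\gm_{n}=0$, then $\vs_{n}(\lm)=\tau_{n}-\lm$ has the single simple zero $\tau_{n}\in G_{n}$, encircled once by $\Gm_{n}$, and the residue theorem again gives $-1$. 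If $m\neq n$, then $\Gm_{m}$ lies in the disc $U_{m}$, which by \ref{iso-1} and the disjointness of the $U_{k}$ does not meet $G_{n}$; as $1/\vs_{n}$ is then holomorphic on the simply connected set $U_{m}$, Cauchy's theorem gives $\int_{\Gm_{m}}\vs_{n}(\lm)^{-1}\,\dlm = 0$. Altogether $\frac{1}{2\pi\ii}\int_{\Gm_{m}}\vs_{n}(\lm)^{-1}\,\dlm = -\dl_{m,n}$, as claimed.

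The step I expect to require the most care is the orientation bookkeeping in part (ii): one must verify that collapsing the counterclockwise loop $\Gm_{n}$ onto the slit produces the two side-integrals with signs that make them reinforce rather than cancel. This is precisely where the minus sign in $\vs_{n}(\lm_{t}^{-})=-\vs_{n}(\lm_{t}^{+})$ and the opposite orientations of $G_{n}^{\pm}$ have to conspire correctly, and it is also what fixes the value $-1$ in part (iii). I would pin it down by first rotating the picture so that $\gm_{n}$ is real and positive, so that $\vs_{n}$ and $G_{n}$ sit in the standard branch-cut configuration and the orientation of the two sides can be read off the usual thin rectangle around $G_{n}$. Everything else is routine once \eqref{Gn-sides}--\eqref{s-root-sides} are in hand.
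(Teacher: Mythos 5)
Your argument is correct and is essentially the paper's own proof: part (i) via the parametrization \eqref{Gn-sides}, the boundary values \eqref{s-root-sides}, and $\int_{-1}^{1}(1-t^{2})^{-1/2}\,\dt=\pi$; part (ii) by collapsing $\Gm_{n}$ onto $G_{n}^{+}\cup G_{n}^{-}$ (the paper simply says ``apply the previous lemma'', which is the same contour-collapse you carry out explicitly, and your sign bookkeeping is right); and part (iii) by Cauchy's theorem for $m\neq n$ together with the explicit computation for $m=n$. The one omission is the degenerate case $\gm_{n}=0$ in part (ii), where $G_{n}^{\pm}$ and \eqref{s-root-sides} are not defined and your slit-collapse derivation does not apply; there $\vs_{n}(\lm)=\tau_{n}-\lm$ and Cauchy's integral formula gives $\frac{1}{2\pi}\abs*{\int_{\Gm_{n}}f(\lm)\vs_{n}(\lm)^{-1}\,\dlm}=\abs{f(\tau_{n})}=\max_{\lm\in G_{n}}\abs{f(\lm)}$ --- a one-line fix that the paper includes explicitly (just as you do, correctly, for the corresponding case of part (iii)).
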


\begin{proof}
(i) We choose the parametrization $\lm_{t}^{\pm}$ of $G_{n}^{\pm}$, defined in~\eqref{Gn-sides}, to obtain for $-1\le t\le 1$,
\[
  \int_{\lm_{n}^{-}}^{\lm_{t}^{\pm}} \frac{f(z)}{w_{n}(z)}\,\dz
  =
  \pm \ii \int_{-1}^{t} \frac{f(\lm_{r}^{\pm})}{\sqrt[+]{1-r^{2}}}\,\dr.
\]
Since $\int_{-1}^{1} \frac{1}{\sqrt[+]{1-r^{2}}}\,\dr = \pi$, the claim follows immediately.

(ii) If $\gm_{n} = 0$, then $\vs_{n}(\lm) = \tau_{n}-\lm$ and the claim follows from Cauchy's theorem. On the other hand, if $\gm_{n}\neq 0$, then we may apply the previous lemma.

(iii)
First suppose $n\neq m$. Then $\vs_{n}$ is analytic on $U_{m}$ and therefore $\frac{1}{2\pi\ii}\int_{\Gm_{m}} \frac{\dlm}{\vs_{n}(\lm)}=0$. Now suppose $m=n$. If $\gm_{n} = 0$, then $\vs_{n}(\lm) = \tau_{n}-\lm$ and the claim follows from Cauchy's theorem. On the other hand, if $\gm_{n}\neq 0$, then in view of~\eqref{s-root-sides}
\[
  \frac{1}{2\pi\ii}\int_{\Gm_{n}} \frac{\dlm}{\vs_{n}(\lm)}
  =
  -\frac{2}{2\pi}\int_{-1}^{1} \frac{\dt}{\sqrt{1-t^{2}}}
  = -1.\qed
\]
\end{proof}

The \emph{canonical root} $\sqrt[c]{\Dl^{2}(\lm)-4}$ can be written in terms of standard roots as follows
\begin{equation}
  \label{c-root}
  \sqrt[c]{\Dl^{2}(\lm)-4} \defl
   2\ii\prod_{m\in\Z} \frac{\vs_{m}(\lm)}{\pi_{m}}
\end{equation}
and is analytic in $\lm$ on $\C\setminus\bigcup_{\gm_{m}\neq 0} G_{m}$ and in $(\lm,\psi)$ on $(\C\setminus \bigcup_{m\in\Z} \ob{U_{m}})\times V_{\ph}$. By Corollary~\ref{wm-ana-quot}, the quotient
\begin{equation}
  \label{om}
    \frac{\dDl(\lm)}{\sqrt[c]{\Dl^{2}(\lm)-4}}
  =
  -\ii\prod_{m\in\Z} \frac{\lm_{m}^{\ld} - \lm}{\vs_{m}(\lm)},
\end{equation}
is analytic in $(\lm,\psi)$ on $(\C\setminus \bigcup_{m\in\Z} \ob{U_{m}}) \times V_{\ph}$, and analytic in $\lm$ on $\C\setminus \bigcup_{\gm_{m}\neq 0} G_{m}$.

We call a path in the complex plane \emph{admissible} for $\ph\in \FL_{c}^{p}$ if, except possibly at its endpoints, it does not intersect any gap $G_{m}(\ph)$, $m\in\Z$.

\begin{lem}
\label{w-closed}
For each $\ph\in \Ws^{p}$, $2 \le p < \infty$, the following holds:
\begin{equivenum}
\item
The function $\frac{\dDl(\lm,\psi)}{\sqrt[c]{\Dl^{2}(\lm,\psi)-4}}$ extends analytically to $\C\setminus \bigcup_{\gm_{m}\neq 0} G_{m}$;
\item
for any admissible path from $\lm_{n}^{-}$ to $\lm_{n}^{+}$ in $U_{n}$,
\[
  \int_{\lm_{n}^{-}}^{\lm_{n}^{+}} \frac{\dDl(\lm,\psi)}{\sqrt[c]{\Dl^{2}(\lm,\psi)-4}}\,\dlm =  0.
\]
In particular, for any closed circuit $\Gm_{n}$ in $U_{n}$ around $G_{n}$,
\[
  \int_{\Gm_{n}} \frac{\dDl(\lm,\psi)}{\sqrt[c]{\Dl^{2}(\lm,\psi)-4}}\,\dlm = 0.\fish
\]
\end{equivenum}
\end{lem}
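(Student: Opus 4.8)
The plan is to read everything off the product representation \eqref{om}, to treat degenerate gaps separately, and to reduce the vanishing in~(ii) to a statement about the multiplier $\rho\defl\frac12\p*{\Dl+\sqrt[c]{\Dl^{2}-4}}$. Throughout put $F\defl\dDl/\sqrt[c]{\Dl^{2}-4}$. Part~(i) is essentially contained in \eqref{om}, which gives $F=-\ii\prod_{m}\frac{\lm_{m}^{\ld}-\lm}{w_{m}(\lm)}$ on $(\C\setminus\bigcup_{m}\ob{U_{m}})\times V_{\ph}$: when $\gm_{m}=0$ one has $\lm_{m}^{\ld}=\tau_{m}=\lm_{m}^{\pm}$ by \eqref{lmld-tau-2}, so the $m$-th factor is identically $1$ and entire, and when $\gm_{m}\neq0$ it extends analytically in $\lm$ to $\C\setminus G_{m}$. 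On a compact $K\subset\C\setminus\bigcup_{\gm_{m}\neq0}G_{m}$ the asymptotics $\tau_{m}=m\pi+\ell_{m}^{p}$, $\gm_{m}=\ell_{m}^{p}$ and $\lm_{m}^{\ld}-\tau_{m}=\gm_{m}^{2}\ell_{m}^{p}$ show that the $m$-th factor equals $1+O(\gm_{m}^{2}/m)$ uniformly on $K$ for $\abs{m}$ large, with summable error (H\"older); hence the product converges locally uniformly on $\C\setminus\bigcup_{\gm_{m}\neq0}G_{m}$ and its limit is the asserted analytic extension.

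For~(ii), fix $\ph\in\Ws^{p}$ and $n\in\Z$. If $\gm_{n}=0$, then by~(i) together with \ref{iso-1}--\ref{iso-3} --- which guarantee that no gap $G_{m}$, $m\neq n$, meets $U_{n}$, while the $m=n$ factor in \eqref{om} is entire --- the function $F$ is analytic on all of $U_{n}$; since $\lm_{n}^{-}=\lm_{n}^{+}$, an admissible path reduces to a loop in $U_{n}$ and its integral of $F$ vanishes by Cauchy, which in particular yields $\int_{\Gm_{n}}F\,\dlm=0$ whenever $\gm_{n}=0$. Assume now $\gm_{n}\neq0$, so that $G_{n}$ is the only non-degenerate gap meeting $U_{n}$; then $F$ is analytic on $U_{n}\setminus G_{n}$, with at most a square-root --- hence integrable --- singularity at $\lm_{n}^{\pm}$, and deforming an admissible path from $\lm_{n}^{-}$ to $\lm_{n}^{+}$ inside $U_{n}$ changes $\int_{\lm_{n}^{-}}^{\lm_{n}^{+}}F\,\dlm$ only by integer multiples of $\int_{\Gm_{n}}F\,\dlm$. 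It therefore suffices to prove $\int_{\Gm_{n}}F\,\dlm=0$.

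For this I would use that $F=\rho'/\rho$: from $\partial_{\lm}\sqrt[c]{\Dl^{2}-4}=\Dl\dDl/\sqrt[c]{\Dl^{2}-4}$ one gets $\rho'=F\rho$, and $\rho$ is analytic and nowhere zero on $\C\setminus\bigcup_{\gm_{m}\neq0}G_{m}$ because $\rho\cdot\frac12\p*{\Dl-\sqrt[c]{\Dl^{2}-4}}\equiv1$. Hence $\frac1{2\pi\ii}\int_{\Gm_{n}}F\,\dlm$ is the winding number of $\rho\circ\Gm_{n}$ about the origin, in particular an integer. Using one fixed circuit around $G_{n}(\psi)$ for $\psi$ in a neighbourhood --- and noting that any two admissible circuits are homologous in $\C\setminus\bigcup_{\gm_{m}\neq0}G_{m}$, on which $F\,\dlm$ is a closed form, so that the integral is unambiguous --- this integer is a continuous $\Z$-valued, hence locally constant, function of $\psi$ on the connected set $\Ws^{p}$. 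Evaluating at the zero potential $\psi=0$, where all gaps degenerate, the case $\gm_{n}=0$ above gives the value $0$; therefore $\int_{\Gm_{n}}F\,\dlm=0$ for all $\psi$, which is the "in particular" claim. Consequently $\int_{\lm_{n}^{-}}^{\lm_{n}^{+}}F\,\dlm$ takes a common value $I$ over all admissible paths in $U_{n}$; and the reflection symmetry across $G_{n}$ --- by \eqref{s-root-sides} the root $w_{n}$ changes sign from $G_{n}^{+}$ to $G_{n}^{-}$, while $\dDl$ and $w_{m}$, $m\neq n$, do not, so $F(\lm_{t}^{+})=-F(\lm_{t}^{-})$ by \eqref{om} --- makes the integrals along the two sides $G_{n}^{\pm}$ opposite, forcing $I=-I$ and hence $I=0$. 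The main obstacle is this period computation: recognizing $\int_{\Gm_{n}}F\,\dlm$ as a winding number and pinning it down by deformation to $\psi=0$ (an equivalent route substitutes $z=\Dl(\lm)$ and evaluates the resulting period of $\dz/\sqrt{z^{2}-4}$, which vanishes because $\Dl(\Gm_{n})$ winds twice about $\Dl(\lm_{n}^{\pm})\in\{\pm2\}$ and not at all about the other branch point).
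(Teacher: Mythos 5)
Your proposal is correct and is essentially the standard argument that the paper itself does not spell out but delegates to~\cite{Molnar:2016uq}: part~(i) is the content of the product representation~\eqref{om} together with Corollary~\ref{wm-ana-quot}, and your key device for part~(ii) --- writing the integrand as $\rho'/\rho$ with $\rho=\tfrac12\bigl(\Dl+\sqrt[c]{\Dl^{2}-4}\bigr)$, a nowhere-vanishing unit since $\rho\cdot\tfrac12\bigl(\Dl-\sqrt[c]{\Dl^{2}-4}\bigr)\equiv1$, so that the period over $\Gm_{n}$ is an integer winding number pinned to $0$ by deformation --- is exactly the mechanism the paper deploys in its proof of Lemma~\ref{F-prop}(i), where the same $\rho$ appears inside the logarithm $l_{n}$ and the value is fixed on real-type potentials and propagated by the identity theorem. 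The final reduction via the sign flip of $w_{n}$ across $G_{n}^{\pm}$, forcing $I=-I$, is likewise the standard closing step, so no substantive difference from the intended proof.
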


\begin{proof}
Standard -- see e.g. \cite{Molnar:2016uq}.\qed
\end{proof}

By Lemma~\ref{w-closed}, for any $\ph\in\Ws^{p}$, $2 \le p < \infty$, the quotient~\eqref{om} is analytic in both variables $(\lm,\ps)$ on $(\C\setminus \bigcup_{m\in\Z} \ob{U_{n}})\times V_{\ph}$ and analytic in $\lm$ on $\C\setminus\bigcup_{\gm_{m}\neq 0} G_{m}$. We proceed by defining on the same domain for any $n\in\Z$ the primitive
\[
  F_{n}(\lm,\psi)
  \defl
  \frac{1}{2}\p*{
  \int_{\lm_{n}^{-}(\psi)}^{\lm} \frac{\dDl(\mu,\ps)}{\sqrt[c]{\Dl^{2}(\mu,\ps)-4}}\,\dmu +
  \int_{\lm_{n}^{+}(\psi)}^{\lm} \frac{\dDl(\mu,\ps)}{\sqrt[c]{\Dl^{2}(\mu,\ps)-4}}\,\dmu
  },
\]
where the paths of integration are chosen to be admissible. These improper integrals exist, since for $\gm_{n} = 0$ the integrand  is analytic on $U_{n}$, while for $\gm_{n}\neq 0$ it is of the form $1/\sqrt{\lm_{n}^{\pm}-\lm}$ locally around $\lm_{n}^{\pm}$. Moreover, $\int_{\lm_{n}^{-}}^{\lm_{n}^{+}}\frac{\dDl(\lm)}{\sqrt[c]{\Dl^{2}(\lm)-4}}\,\dlm = 0$ by Lemma~\ref{w-closed}, hence the definition of $F_{n}$ is independent of the chosen admissible path and one has
\[
  F_{n}(\lm)
   = \int_{\lm_{n}^{-}(\psi)}^{\lm} \frac{\dDl(\mu)}{\sqrt[c]{\Dl^{2}(\mu)-4}}\,\dmu
   = \int_{\lm_{n}^{+}(\psi)}^{\lm} \frac{\dDl(\mu)}{\sqrt[c]{\Dl^{2}(\mu)-4}}\,\dmu.
\]
Even though the eigenvalues $\lm_{n}^{\pm}$ are, due to their lexicographical ordering, not even continuous on $\Ws^{p}$, the mappings $F_{n}$ turn out to be analytic.

\begin{lem}
\label{F-prop}
For every $\ph\in \Ws^{p}$, $2 \le p < \infty$, the following holds:

\begin{equivenum}

\item $F_{n}$ is analytic in both variables $(\lm,\ps)$ on $(\C\setminus\bigcup_{m\in\Z} \ob{U_{m}}) \times V_{\ph}$ with gradient
\[
  \partial F_{n}(\lm) = \frac{\partial\Dl(\lm)}{\sqrt[c]{\Dl^{2}(\lm)-4}},
\]
and $F_{n}(\lm)\equiv F_{n}(\lm,\ph)$ is analytic in $\lm$ on $\C\setminus \bigcup_{\gm_{m}\neq 0} G_{m}$.

\item $F_{0}(\lm) = F_{n}(\lm) - \ii n\pi$ on $\C\setminus \bigcup_{\gm_{m}\neq 0} G_{m}$. In particular, $F_{0}$ extends continuously to all points $\lm_{m}^{\pm}$, $m\in\Z$, and one has
\[
  F_{0}(\lm_{m}^{+}) = F_{0}(\lm_{m}^{-}) = - \ii m\pi,\qquad m\in\Z.
\]

\item For any $n\in\Z$, $F_{n}^{2}(\lm)$ is analytic on $U_{n}$.

\item
Locally uniformly in $\ph$ and uniformly as $\abs{n}\to\infty$,
\[
  \sup_{\lm \in G_{n}^{+}\cup G_{n}^{-}} \abs{F_{n}(\lm)} = O(\gm_{n}).
\]

\item 
$
  I_{n} = -\frac{1}{\pi}\int_{\Gm_{n}} F(\lm)\,\dlm
$
for every $n\in\Z$.

\item If $\ph$ is of real type, then for any $\lm\in G_{n}$
\[
  F_{n}(\lm\pm \ii 0)  = \pm \cosh^{-1} \frac{(-1)^{n}\Dl(\lm)}{2}.
\]

%
%
\item At the zero potential one has $F_{n}(\lm,0) = -\ii \lm + \ii n\pi$.~\fish

\end{equivenum}

\end{lem}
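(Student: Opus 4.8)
The seven items form the standard analytic toolkit for the abelian integral, and the two substantial ones are (i) and (iii); every other item is a short consequence of these together with the product representations \eqref{dl-inf-prod}, \eqref{c-root}, \eqref{om}, Lemma~\ref{w-closed} and Lemma~\ref{int-wm-quot-est}. The plan is to reproduce the $L^{2}$-treatment of \cite{Grebert:2014iq}, the only new ingredient being that for $\FL_{c}^{p}$-potentials the discriminant $\Dl$ and the infinite products in \eqref{c-root}, \eqref{om} are jointly analytic in $(\lm,\psi)$, which is available from \cite{Molnar:2016uq}. The one genuinely delicate point is the joint analyticity in (i): the endpoints $\lm_{n}^{\pm}$ entering the definition of $F_{n}$ are not continuous on $\Ws^{p}$, so the analyticity must be extracted from the \emph{symmetric} spectral data.

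For (i), fix a reference point $\lm_{*}\in\partial U_{n}$. On $(\C\setminus\bigcup_{m}\ob{U_{m}})\times V_{\ph}$ the integrand \eqref{om} is jointly analytic, so writing $F_{n}(\lm)=F_{n}(\lm_{*})+\int_{\lm_{*}}^{\lm}\frac{\dDl(\mu)}{\sqrt[c]{\Dl^{2}(\mu)-4}}\,\dmu$ reduces everything to the analyticity of $\psi\mapsto F_{n}(\lm_{*})$. Since the two integrals defining $F_{n}(\lm_{*})$ differ by $\int_{\lm_{n}^{-}}^{\lm_{n}^{+}}\frac{\dDl}{\sqrt[c]{\Dl^{2}-4}}\,\dmu=0$ (Lemma~\ref{w-closed}), one has $F_{n}(\lm_{*})=\int_{\lm_{n}^{-}}^{\lm_{*}}\frac{\dDl(\mu)}{\sqrt[c]{\Dl^{2}(\mu)-4}}\,\dmu$. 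On $U_{n}$ one factors $\sqrt[c]{\Dl^{2}-4}=w_{n}\,\chi_{n}$ with $\chi_{n}:=2\ii\prod_{m\neq n}w_{m}/\pi_{m}$ jointly analytic and, by \eqref{s-root-est}, non-vanishing, so the integrand equals $g_{n}/w_{n}$ with $g_{n}:=\dDl/\chi_{n}$ jointly analytic on $U_{n}\times V_{\ph}$. The substitution uniformizing the two branch points $\lm_{n}^{\pm}$ over $U_{n}$ (the one used for the angle variables in \cite{Molnar:2016uq,Grebert:2014iq}) turns $\int_{\lm_{n}^{-}}^{\lm_{*}}g_{n}/w_{n}\,\dmu$ into an integral over a fixed contour of a function analytic in $\psi$ that depends on the spectral data only through $\tau_{n}$ and $\gm_{n}^{2}$ — both analytic on $\Ws^{p}$ — giving the joint analyticity of $F_{n}(\lm_{*})$, hence of $F_{n}$. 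The gradient formula then follows by differentiating under the integral: one checks $\partial F_{n}(\lm_{*})=\frac{\partial\Dl(\lm_{*})}{\sqrt[c]{\Dl^{2}(\lm_{*})-4}}$ and $\partial\!\int_{\lm_{*}}^{\lm}\frac{\dDl(\mu)}{\sqrt[c]{\Dl^{2}(\mu)-4}}\,\dmu=\frac{\partial\Dl(\lm)}{\sqrt[c]{\Dl^{2}(\lm)-4}}-\frac{\partial\Dl(\lm_{*})}{\sqrt[c]{\Dl^{2}(\lm_{*})-4}}$, the endpoint-variation terms cancelling because of the symmetrization over $\lm_{n}^{-}$ and $\lm_{n}^{+}$, exactly as in \cite{Grebert:2014iq}. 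Analyticity of $F_{n}$ in $\lm$ alone on $\C\setminus\bigcup_{\gm_{m}\neq0}G_{m}$ is immediate: there the integrand is analytic, and by Lemma~\ref{w-closed} $F_{n}$ has vanishing period around every $G_{m}$, hence is single-valued.

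For (iii), the same period-vanishing makes $F_{n}$ analytic and single-valued on $U_{n}\setminus G_{n}$; crossing the slit $G_{n}$ reverses the sign of $\sqrt[c]{\Dl^{2}-4}$, hence of $F_{n}$ (as seen from (vi) in the real case, but valid in general), so $F_{n}^{2}$ has matching boundary values on the two sides of $G_{n}\setminus\{\lm_{n}^{\pm}\}$ and extends analytically across it, while near $\lm_{n}^{\pm}$ the integrand is $O\big((\lm_{n}^{\pm}-\lm)^{-1/2}\big)$, whence $F_{n}=O\big(|\lm-\lm_{n}^{\pm}|^{1/2}\big)$ and $F_{n}^{2}$ is bounded; a bounded isolated singularity being removable, $F_{n}^{2}$ is analytic on all of $U_{n}$. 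Item (ii): since $F_{n}'=F_{0}'=\frac{\dDl}{\sqrt[c]{\Dl^{2}-4}}$ on the connected set $\C\setminus\bigcup_{\gm_{m}\neq0}G_{m}$, the difference $F_{n}-F_{0}$ is constant in $\lm$; its value $\ii n\pi$ is read off from $\lm\to\infty$, where \eqref{om} gives $F_{n}(\lm)=-\ii\lm+\ii n\pi+\o(1)$ (consistent with (vii) at $\ph=0$), or alternatively from decomposing $\int_{\lm_{n}^{-}}^{\lm_{0}^{-}}$ along the real axis, where every gap contributes $0$ (Lemma~\ref{w-closed}) and every band contributes $\pm\ii\pi$. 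Since $F_{m}(\lm_{m}^{-})=0$ trivially and $F_{m}(\lm_{m}^{+})=\int_{\lm_{m}^{-}}^{\lm_{m}^{+}}\frac{\dDl}{\sqrt[c]{\Dl^{2}-4}}\,\dmu=0$ by Lemma~\ref{w-closed}, (iii) shows $F_{m}^{2}$ — hence $F_{m}$ — extends continuously to $\lm_{m}^{\pm}$ with value $0$, so $F_{0}=F_{m}-\ii m\pi$ is continuous there with value $-\ii m\pi$. Item (v): $F_{n}$ is single-valued on $\Gm_{n}$, so integration by parts gives $\pi I_{n}=\int_{\Gm_{n}}\frac{\lm\dDl}{\sqrt[c]{\Dl^{2}-4}}\,\dlm=\int_{\Gm_{n}}\lm F_{n}'(\lm)\,\dlm=-\int_{\Gm_{n}}F_{n}(\lm)\,\dlm=-\int_{\Gm_{n}}F(\lm)\,\dlm$, the last equality because $F_{n}-F_{0}$ is constant and $\int_{\Gm_{n}}\dlm=0$.

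Item (iv): on $G_{n}$ one has $g_{n}=\dDl/\chi_{n}=O(\gm_{n})$ locally uniformly in $\ph$ and uniformly in $n$, because the $n$-th factor of $\dDl=2\prod_{m}(\lm_{m}^{\ld}-\lm)/\pi_{m}$ is $O(\gm_{n})$ on $G_{n}$ by \eqref{lmld-tau-2} while the remaining product divided by $\chi_{n}$ stays bounded by Corollary~\ref{wm-ana-quot} and \eqref{s-root-est}; then $F_{n}(\lm)=\int_{\lm_{n}^{-}}^{\lm}g_{n}/w_{n}\,\dmu$ and Lemma~\ref{int-wm-quot-est}(i) give $\sup_{G_{n}^{+}\cup G_{n}^{-}}|F_{n}|\le\pi\max_{G_{n}}|g_{n}|=O(\gm_{n})$. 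Item (vi): for $\ph$ of real type, $(-1)^{n}\Dl\ge2$ on $G_{n}$ (equal to $2$ at the endpoints and $|\Dl|\ge2$ on gaps), so $\cosh^{-1}\!\big(\tfrac{(-1)^{n}\Dl}{2}\big)$ is real, vanishes at $\lm_{n}^{-}$, and has derivative $\frac{(-1)^{n}\dDl}{\sqrt{\Dl^{2}-4}}$; the sign convention for $\sqrt[c]{\Dl^{2}-4}$ together with \eqref{s-root-sides} identifies $F_{n}'(\lm\pm\ii0)=\pm\frac{(-1)^{n}\dDl}{\sqrt{\Dl^{2}-4}}$, and integrating from $\lm_{n}^{-}$ yields the identity, as in \cite{Grebert:2014iq}. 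Item (vii) is a direct computation: at $\ph=0$, $\Dl(\lm,0)=2\cos\lm$, $\dDl=-2\sin\lm$ and $\sqrt[c]{\Dl^{2}-4}=2\ii\prod_{m}(m\pi-\lm)/\pi_{m}=-2\ii\sin\lm$, so the integrand equals $-\ii$, and since $\lm_{n}^{\pm}(0)=n\pi$ we get $F_{n}(\lm,0)=-\ii(\lm-n\pi)=-\ii\lm+\ii n\pi$. I expect the main obstacle to be item (i) — the joint analyticity of $F_{n}$ together with the gradient formula despite the non-continuous endpoints $\lm_{n}^{\pm}$, which forces one to work with the symmetric data $\tau_{n},\gm_{n}^{2}$ through the uniformizing substitution; the removable-singularity argument for $F_{n}^{2}$ in (iii) is the only other step requiring an idea, but it is short.
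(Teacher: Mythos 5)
Your proof is correct in substance and covers all seven items with essentially the same ingredients as the paper; the one place where you take a genuinely different route is the gradient formula in (i). The paper avoids differentiating the endpoints altogether: for $\ph$ of real type it observes that $(-1)^{n}\Dl\ge 2$ on $G_{n}$, so on a neighborhood $U_{n}'$ of a circuit $\Gm_{n}$ the principal branch of $l_{n}(\lm,\ps)=\log\frac{(-1)^{n}}{2}\bigl(\Dl(\lm,\ps)+\sqrt[c]{\Dl^{2}(\lm,\ps)-4}\bigr)$ is jointly analytic, satisfies $\partial_{\lm}l_{n}=\dDl/\sqrt[c]{\Dl^{2}-4}$ and $l_{n}(\lm_{n}^{\pm})=0$, hence equals $F_{n}$ there; the gradient formula is then a one-line computation of $\partial l_{n}$, extended to the full domain by the identity theorem. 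Your route --- differentiating under the integral and asserting that the endpoint-variation terms cancel by symmetrization --- is more delicate than you let on: the integrand behaves like $(\lm-\lm_{n}^{\pm})^{-1/2}$ at the endpoints, so those variation terms are not individually finite, and the cancellation has to be extracted after the uniformizing substitution (which is precisely why the paper defers the joint analyticity of $F_{n}$ to \cite[Appendix~F]{Molnar:2016uq} and handles the gradient separately via $l_{n}$). Your appeal to \cite{Grebert:2014iq} covers this, but the logarithm identity buys a self-contained and singularity-free derivation. Two further small points: in (ii), your suggestion to read off the constant $\ii n\pi$ from the $\lm\to\infty$ asymptotics of $F_{n}$ is not justified for general (non-finite-gap, complex) potentials --- the paper instead computes each band integral as $\ii(-1)^{k}\sin^{-1}(\Dl/2)\big|_{\lm_{k}^{+}}^{\lm_{k+1}^{-}}=-\ii\pi$ for real type and then invokes the identity theorem on $V_{\ph}$, which coincides with your second alternative; and in (iv) your factorization $\sqrt[c]{\Dl^{2}-4}=w_{n}\chi_{n}$ with $\chi_{n}=2\ii\prod_{m\neq n}w_{m}/\pi_{m}$ is off by the factor $1/\pi_{n}$, a harmless bookkeeping slip since the $\pi_{m}$ cancel in the quotient \eqref{om} anyway.
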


\begin{proof}
(i)
The proof of the analyticity of $F_{n}$ on $(\C\setminus \bigcup_{m\in\Z} \ob{U_{m}})\times V_{\ph}$ is standard but a bit technical and can be found in~\cite[Appendix~F]{Molnar:2016uq}. The analyticity of $F_{n}(\lm) = F_{n}(\lm,\ph)$ on $\C\setminus\bigcup_{\gm_{m}\neq 0} G_{m}$ follows immediately from the properties of $\frac{\dDl(\lm)}{\sqrt[c]{\Dl^{2}(\lm)-4}}$ obtained in Lemma~\ref{w-closed}.

To obtain the formula for the gradient, we first consider the case of $\ph$ being of real type,
Then $(-1)^{n}\Dl(\lm,\ph)\ge 2$ on $G_{n}$ and hence
\[
  \min_{\lm_{n}^{-}\le \lm \le \lm_{n}^{+}} (-1)^{n}\Dl(\lm,\ph) - \sqrt[+]{\Dl^{2}(\lm,\ph)-4} > 0.
\]
Thus, after possibly shrinking $V_{\ph}$, we can choose a circuit $\Gm_{n}$, which is contained in $U_{n}$, and an open neighborhood $U_{n}'$ of $\Gm_{n}$ so that $\Gm_{n}$ circles around $G_{n}$, $\ob{U_{n}'}\subset U_{m}\setminus G_{m}$ for any potential in $V_{\ph}$, and the real part of $(-1)^{n}\p*{\Dl(\lm,\ps) + \sqrt[c]{\Dl^{2}(\lm,\ps)-4}}$ is strictly positive on $U_{n}'$. In consequence, the principal branch of the logarithm
\[
  l_{n}(\lm,\ps) = \log \frac{(-1)^{n}}{2}\p*{\Dl(\lm,\ps) + \sqrt[c]{\Dl^{2}(\lm,\ps)-4}}
\]
is analytic on $U_{n}'\times V_{\ph}$. Clearly, $\partial_{\lm} l_{n} = \frac{\dDl(\lm)}{\sqrt[c]{\Dl^{2}(\lm)-4}}$ and $l_{n}(\lm_{n}^{\pm})\equiv 0$, hence $F_{n} = l_{n}$. Taking the gradient of the above identity yields on $U_{n}'\times V_{\ph}$
\[
  \partial F_{n} = \partial l_{n}(\lm) = \frac{\partial\Dl(\lm)}{\sqrt[c]{\Dl^{2}(\lm)-4}}.
\]
Since both hand sides of this identity are analytic in both variables on $(\C\setminus\bigcup_{m\in\Z}\ob{U_{m}})\times V_{\ph}$ and analytic in $\lm$ on $\C\setminus \bigcup_{\gm_{m}\neq 0} G_{m}$, the formula for the gradient extends to these domains by the identity theorem.

(ii)
Note that $F_{0}(\lm) = F_{n}(\lm) + \int_{\lm_{0}^{+}}^{\lm_{n}^{+}}\frac{\dDl(\lm)}{\sqrt[c]{\Dl^{2}(\lm)-4}}\,\dlm$. Since $\int_{\lm_{n}^{-}}^{\lm_{n}^{+}}\frac{\dDl(\lm)}{\sqrt[c]{\Dl^{2}(\lm)-4}}\,\dlm = 0$ for any $n\in\Z$ by Lemma~\ref{w-closed}, $F_{n}(\lm_{n}^{+}) = F_{n}(\lm_{n}^{-}) = 0$. Moreover,
\[
  \int_{\lm_{0}^{+}}^{\lm_{n}^{+}}\frac{\dDl(\lm)}{\sqrt[c]{\Dl^{2}(\lm)-4}}\,\dlm
   = \sum_{k=0}^{n-1} \int_{\lm_{k}^{+}}^{\lm_{k+1}^{-}} \frac{\dDl(\lm)}{\sqrt[c]{\Dl^{2}(\lm)-4}}\,\dlm,\quad n\ge 1,
\]
while for $n\le -1$
\[
  \int_{\lm_{0}^{+}}^{\lm_{n}^{+}}\frac{\dDl(\lm)}{\sqrt[c]{\Dl^{2}(\lm)-4}}\,\dlm
   = -\sum_{k=n}^{-1} \int_{\lm_{k}^{+}}^{\lm_{k+1}^{-}} \frac{\dDl(\lm)}{\sqrt[c]{\Dl^{2}(\lm)-4}}\,\dlm,\quad n\le -1.
\]
Therefore, it is to compute $\int_{\lm_{k}^{+}}^{\lm_{k+1}^{-}} \frac{\dDl(\lm)}{\sqrt[c]{\Dl^{2}(\lm)-4}}\,\dlm$ for $k\in\Z$. To do this, first consider the case where $\ph$ is of real type.
In this case $\ii(-1)^{k}\sqrt[c]{\Dl^{2}(\lm)-4} > 0$ for $\lm_{k}^{+} < \lm < \lm_{k+1}^{-}$ -- c.f. \cite[Section 5]{Grebert:2014iq} -- so
\begin{align*}
  \int_{\lm_{k}^+}^{\lm_{k+1}^-} \frac{\dDl(\lm)}{\sqrt[c]{\Dl^{2}(\lm)-4}}\,\dlm 
  &=
  \ii(-1)^{k}\int_{\lm_{k}^+}^{\lm_{k+1}^-}
  \frac{\dDl(\lm)}{\sqrt[+]{4-\Dl^2(\lm)}}\,\dlm\\
   &=
  \ii(-1)^{k}
  \sin^{-1}\frac{\Dl(\lm)}{2}\bigg|_{\lm_{k}^+}^{\lm_{k+1}^-}
   =
  -\ii \pi,
\end{align*}
and hence $\int_{\lm_{0}^{+}}^{\lm_{n}^{+}}\frac{\dDl(\lm)}{\sqrt[+]{4-\Dl^2(\lm)}}\,\dlm  = -\ii n\pi$ for any $n\in\Z$. The function $\int_{\lm_{0}^{+}}^{\lm_{n}^{+}}\frac{\dDl(\lm)}{\sqrt[+]{4-\Dl^2(\lm)}}\,\dlm = F_{0}(\lm) - F_{n}(\lm)$ is analytic on $V_{\ph}$ by item (i) and vanishes by the preceding argument on the real subspace $V_{\ph}\cap \FL_{r}^{p}$. Therefore, $\int_{\lm_{0}^{+}}^{\lm_{n}^{+}}\frac{\dDl(\lm)}{\sqrt[+]{4-\Dl^2(\lm)}}\,\dlm  = -\ii n\pi$ holds true on all of $V_{\ph}$ in view of the identity theorem.

(iii)
In view of item (i) it remains to show that $F_{n}^{2}$ admits also for $\gm_{n}\neq 0$ an analytic extension from $U_{n}\setminus G_{n}$ to all of $U_{n}$. Write~\eqref{om} in the form
\begin{equation}
  \label{chi-n}
  \frac{\dDl(\lm)}{\sqrt[c]{\Dl^{2}(\lm)-4}} = -\ii\frac{\lm_{n}^{\ld}-\lm}{\vs_{n}(\lm)}\chi_{n}(\lm),\qquad\;\;\;
  \chi_{n}(\lm) = \prod_{m\neq n} \frac{\lm_{m}^{\ld}-\lm}{\vs_{m}(\lm)}.
\end{equation}
The functionals $\chi_{n}$, $n\in \Z$, are analytic on $U_{n}$ by Corollary~\ref{wm-ana-quot}. Moreover, the roots $\vs_{n}(\lm)$, $n\in\Z$, admit opposite signs on opposite sides of $G_{n}$. Therefore, in view of $F_{n}(\lm) = \int_{\lm_{n}^{+}}^{\lm} \frac{\dDl(\mu)}{\sqrt[c]{\Dl^{2}(\mu)-4}}\,\dmu$, for any $\lm\in G_{n}$,
\[
  F_{n}\big|_{G_{n}^{+}}(\lm) = - F_{n}\big|_{G_{n}^{-}}(\lm).
\]
Consequently, $F_{n}^{2}$ is continuous and hence analytic on all of $U_{n}$.

(iv)
If $\gm_{n} = 0$, then $G_{n} = \setd{\lm_{n}^{\pm}}$ and $F(\lm_{n}^{\pm}) = 0$ so there is nothing to show. Thus suppose $\gm_{n}\neq 0$. We have $\lm_{n}^{\ld} = \tau_{n} + \gm_{n}^{2}\ell_{n}^{p}$ and $\sup_{\lm\in U_{n}}\abs{\chi_{n}(\lm) - 1} = \ell_{n}^{p}$ locally uniformly on $V_{\ph}$  by~\eqref{lmld-tau-2} and Lemma~\ref{wm-ana-quot}. In view of~\eqref{chi-n} it follows with Lemma~\ref{s-root-reciprocal-estimate} that
\[
  \sup_{\lm\in G_{n}^{-}\cup G_{n}^{+}}\abs{F_{n}(\lm)}
  \le
  O\p*{\sup_{\lm\in G_{n}} \abs{\lm_{n}^{\ld} - \lm}}
  = O(\gm_{n}),
\]
uniformly on $V$ and for all $n\in\Z$.

(v)
The claim follows from~\eqref{action} using integration by parts.

(vi)
If $\ph$ is of real type, then for any $\lm\in G_{n}$
\[
  F_{n}(\lm\pm \ii 0)
  = \pm \int_{\lm_{n}^{-}}^{\lm} \frac{(-1)^{n}\dDl(\mu)}{\sqrt[+]{\Dl^{2}(\mu)-4}} \dmu
  = \pm \cosh^{-1} \frac{(-1)^{n}\Dl(\lm)}{2}.
\]

(vii)
At the zero potential, $\left.\frac{\dDl(\lm)}{\sqrt[c]{\Dl^{2}(\lm)-4}}\right|_{\ph=0} = -\ii$, which follows directly from the product representation \eqref{om}. Integration thus yields $F_{n}(\lm,0) = -\ii(\lm - n\pi)$.\qed
\end{proof}

To simplify notation, write $F(\lm) \equiv F_{0}(\lm)$ and note that $F(\lm) = F_{n}(\lm) - \ii n\pi$ for any $n\in\Z$.

\begin{lem}
Suppose $\ph$ is a finite gap potential of real type, then $F$ is analytic outside a disc of finite radius centered at the origin and admits the Laurent expansion
\begin{equation}
  \label{F-exp}
  F(\lm) = -\ii \lm + \ii\sum_{n\ge 1} \frac{\Hm_{n}}{(2\lm)^{n}},
\end{equation}
where $\Hm_{n}$ denotes the $n$th Hamiltonian of the NLS hierarchy.~\fish
\end{lem}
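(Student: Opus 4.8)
The plan is to identify the Laurent coefficients of $F$ at $\lm=\infty$ with the NLS Hamiltonians by comparing two asymptotic expansions. First I would recall that for a finite gap potential of real type, $F=F_0$ is analytic on $\C\setminus\bigcup_{\gm_m\neq 0}G_m$; since only finitely many gaps are open and all gap lengths $\gm_m$ are bounded, all the open gaps lie in a disc of finite radius, so $F$ is analytic outside that disc and therefore admits a Laurent expansion there. By Lemma~\ref{F-prop}(vii) the leading behaviour at the zero potential is $F(\lm,0)=-\ii\lm$; more to the point, the product representation~\eqref{om} for $\frac{\dDl(\lm)}{\sqrt[c]{\Dl^2(\lm)-4}}$ shows that for any (finite gap, real type) $\ph$ one has $\frac{\dDl(\lm)}{\sqrt[c]{\Dl^2(\lm)-4}}\to -\ii$ as $\lm\to\infty$, so $F(\lm)=-\ii\lm+O(1)$; in fact, since $\lm_n^\ld-\tau_n$ and $\tau_n-n\pi$ decay, the product telescopes to an expansion in powers of $1/\lm$ with no constant term once one subtracts $-\ii\lm$, which gives the overall shape $F(\lm)=-\ii\lm+\ii\sum_{n\ge 1}c_n/(2\lm)^n$ for some real coefficients $c_n=c_n(\ph)$. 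It remains to show $c_n=\Hm_n$.

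For that I would use the standard fact from the inverse spectral theory of the Zakharov–Shabat operator that the same function $F$ (equivalently $\dDl/\sqrt[c]{\Dl^2-4}$, or the quasimomentum) is the generating function of the NLS hierarchy: the trace formulas express each $\Hm_n$ as a contour integral of $F$ against $\lm^{n-1}$ around a large circle, or equivalently the asymptotic expansion of $\log\big(\Dl(\lm)+\sqrt[c]{\Dl^2(\lm)-4}\big)/2$ at $\lm=\infty$ has coefficients given by the $\Hm_n$. Concretely, on the real-type locus one has $F_n=l_n=\log\tfrac{(-1)^n}{2}(\Dl+\sqrt[c]{\Dl^2-4})$ from the proof of Lemma~\ref{F-prop}(i), and the high-energy asymptotics of the fundamental solution $M(1,\lm,\ph)$ of $L(\ph)$ yield an expansion of $\operatorname{tr}M(1,\lm,\ph)=\Dl(\lm,\ph)$, hence of $\log(\Dl+\sqrt{\Dl^2-4})$, whose coefficients are precisely the densities integrated in $\Hm_1,\Hm_2,\Hm_3,\dots$. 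I would cite~\cite{Grebert:2014iq} (and, for $p>2$, \cite{Molnar:2016uq}) for this asymptotic expansion of the discriminant and for the identification of its coefficients with the NLS hierarchy. Matching the two expansions of $F$ coefficient by coefficient then forces $c_n=\Hm_n$.

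Alternatively — and this is probably the cleanest write-up — I would argue directly via Lemma~\ref{F-prop}(v): $I_k=-\frac1\pi\int_{\Gm_k}F(\lm)\,\dlm$, and more generally the higher NLS Hamiltonians are obtained from $F$ by the known formula $\Hm_n=$ (a universal constant times) the $\lm^{-n}$-Laurent coefficient of $F$ blown up by a large circle, since deforming $\Gm_k$ to a large circle and summing over $k$ turns action-type integrals into the trace formulas. One checks the normalization on the first two coefficients: the $1/\lm$ term of $F$ must reproduce $\Hm_1=\int \phm\php$ and the $1/\lm^2$ term $\Hm_2=\frac{\ii}{2}\int(\php\partial_x\phm-\phm\partial_x\php)$, which one can verify either from the explicit high-energy asymptotics of $\Dl$ or by testing on one-gap potentials; the factor $(2\lm)^{-n}$ and the prefactor $\ii$ in~\eqref{F-exp} are fixed by this check and are consistent with the asymptotics~\eqref{exp-om-kdv1} quoted earlier.

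The main obstacle is not conceptual but bookkeeping: pinning down the exact constant $\ii\,(2\lm)^{-n}$ in front of each $\Hm_n$, i.e. matching the normalization of the NLS hierarchy used here (recall the $(-\ii)^{n+1}$ twist relative to~\cite{Grebert:2014iq}) against the normalization implicit in the product representation~\eqref{om} of $\dDl/\sqrt[c]{\Dl^2-4}$. I would handle this by computing the first one or two coefficients explicitly from the large-$\lm$ asymptotics of $\Dl(\lm,\ph)$ and invoking the recursive structure of the hierarchy (together with the fact that both sides Poisson-commute with all the $\Hm_k$) to conclude that the remaining coefficients agree; the finite gap, real-type hypothesis is what makes the Laurent expansion legitimate and lets one reduce, by density and analyticity in $\ph$, to a situation where all spectral quantities are genuine eigenvalues and classical trace formulas apply.
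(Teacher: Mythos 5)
Your proposal is correct and follows essentially the same route as the paper: both reduce the claim to the known high-energy asymptotics of the discriminant from \cite[Theorem~4.8]{Grebert:2014iq}, whose coefficients are the (suitably normalized) NLS Hamiltonians, combined with the representation of $F$ on the real-type locus as a branch of $-\ii\asin\bigl((-1)^{n+1}\Dl/2\bigr)$, equivalently $\log\tfrac{(-1)^{n}}{2}\bigl(\Dl+\sqrt[c]{\Dl^{2}-4}\bigr)$, evaluated along a sequence $\nu_{n}\to\infty$. The normalization bookkeeping you flag is handled in the paper by quoting the expansion $\Dl(\nu_{n})=2\cos\ii\sg_{N}(\nu_{n})+O(\nu_{n}^{-N})$ with $\sg_{N}(\lm)=-\ii\lm+\ii\sum_{n=1}^{N}\Hm_{n}/(2\lm)^{n}$ directly (with the $(-\ii)^{n+1}$ twist noted in a footnote), rather than by checking the first coefficients and invoking the recursion.
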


\begin{proof}
By the preceding lemma $F(\lm)$ is analytic on $\C\setminus\bigcup_{\gm_{m}\neq 0} G_{m}$.
Suppose $\ph$ is a finite gap potential of real type, then $F(\lm)$ is analytic outside a disc of finite radius. Moreover, the product expansion~\eqref{om} of $\frac{\dDl(\lm)}{\sqrt[c]{\Dl^{2}(\lm)-4}}$ is finite,  whence one verifies directly that $F(\lm) = O(\lm)$ uniformly as $\abs{\lm}\to \infty$. Therefore, $F$ is meromorphic with a pole at infinity of order at most one, and it suffices to determine the Laurent expansion of $F$ along an arbitrary sequence $\nu_{n}$ with $\abs{\nu_{n}}\to \infty$.

Since $\ph$ is assumed to be of real type, given any $n\in\Z$, the restriction of the function $(-1)^{n+1} \Dl(\lm)$ to the interval $[\lm_{n}^{+},\lm_{n+1}^{-}]$ is strictly increasing from $-2$ to $2$. The canonical root for $\lm_n^+ < \lm < \lm_{n+1}^-$ is given by
\[
  \sqrt[c]{\Dl^2(\lm) - 4}
   = (-1)^{n+1} \ii \sqrt[+]{4 - \Dl^2(\lm)}.
\]
Furthermore, one computes for $\lm_n^+ < \lm < \lm_{n+1}^-$ that
\begin{align*}
  \partial_\lm \left(-\ii \asin\left((-1)^{n+1}\frac{\Dl(\lm)}{2}\right) \right)
   &= \ii \frac{(-1)^{n}\Dl^\ld(\lm)/2}{\sqrt[+]{1-\Dl^2(\lm)/4}}
   = \frac{\Dl^\ld(\lm)}{\sqrt[c]{\Dl^2(\lm) - 4}}.
\end{align*}
Hence for any $\lm_n^+ \le \lm \le \lm_{n+1}^-$,
\begin{align*}
  F(\lm)
  &= F(\lm_n^+) + \int_{\lm_n^+}^{\lm} \frac{\Dl^\ld(\mu)}{\sqrt[c]{\Dl^2(\mu) - 4}} \, \dmu \\
  &= -\ii n \pi + \left[-\ii \asin\left((-1)^{n+1}\frac{\Dl(\mu)}{2}\right) \right]_{\lm_n^+}^\lm \\
  &= -\ii (n + 1/2) \pi - \ii\asin\left((-1)^{n+1}\frac{\Dl(\lm)}{2}\right).
\end{align*}
Let $\nu_{n} = (n+1/2)\pi$, then by \cite[Theorem~4.8]{Grebert:2014iq}\footnote{Note that in in comparison to \cite{Grebert:2014iq} we multiplied for $n\ge 2$ the $n$th Hamiltonian with the factor $(-\ii)^{n+1}$.} for any $N\ge 1$
\[
  \Dl(\nu_{n}) = 2\cos\ii\sg_{N}(\nu_{n}) + O(\nu_{n}^{-N}),\qquad
  \sg_{N}(\lm) = -\ii \lm + \ii\sum_{n=1}^{N} \frac{\Hm_{n}}{(2\lm)^{n}},
\]
as $\abs{n}\to\infty$.
Using that $\partial_z \asin(z) = {1}/{\sqrt[+]{1-z^2}}$ one gets by the mean value theorem
\[
  \abs*{\asin\p*{ (-1)^{n+1}\frac{\Dl(\nu_{n})}{2} }
      - \asin\p*{ (-1)^{n+1} \cos \ii \sg_N(\nu_{n}) }} = O(\nu_{n}^{-N}),
\]
and hence
\[
  F(\nu_{n}) = -\ii \nu_{n} - \ii\asin\p*{ (-1)^{n+1} \cos\ii \sg_N(\nu_{n}) } + O(\nu_{n}^{-N}),
  \qquad n\to\infty.
\]

Finally, writing $(-1)^{n+1} = -\sin \nu_n$ one gets by the addition theorem for the sine
\[
  (-1)^{n+1} \cos \ii\sg_N(\nu_{n}) = \sin\p*{ \ii\sg_N(\nu_{n}) - \nu_n },
\]
and hence
\[
  -\ii\asin\p*{ (-1)^{n+1} \cos \ii \sg_N(\nu_{n}) } = \sg_N(\nu_{n}) + \ii\nu_{n}.
\]
This gives $F(\nu_{n}) = \sg_{N}(\nu_{n}) + O(\nu_{n}^{-N})$, hence the Laurent coefficients of $F$ can be determined from $\sg_{N}$.~\qed
\end{proof}

The expansion~\eqref{F-exp} of $F(\lm)$ yields one of $F^{4}(\lm)$ which we record for later use.

\begin{cor}
\label{cor:F3-F4}
Suppose $\ph$ is a finite gap potential of real type, then $F^{4}$ is analytic outside a disc of finite radius and admits the Laurent expansion
\begin{align*}
  F^{4}(\lm) &=
   \lm^{4}
   - 2\Hm_{1}\lm^{2}
   - \Hm_{2}\lm
   - \frac{1}{2}(\Hm_{3} - 3\Hm_{1}^{2})
   - \frac{1}{4}\p*{\Hm_{4} - 6\Hm_{1}\Hm_{2} }\frac{1}{\lm}
   + O(\lm^{-2}).~\fish
\end{align*}
\end{cor}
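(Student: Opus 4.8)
The plan is to obtain the expansion of $F^{4}$ directly from~\eqref{F-exp} by raising that Laurent series to the fourth power. First I would note that, by the preceding lemma, $F$ — and hence $F^{4}$ — is analytic on $\setdef{\lm\in\C}{\abs{\lm} > R}$ for some finite $R$, and that~\eqref{F-exp} converges absolutely there; consequently the Laurent expansion of $F^{4}$ on that annulus is literally the fourth power of the series~\eqref{F-exp}, computed by formal multiplication. Since $F(\lm) = -\ii\lm + O(1)$, the product has a pole of order four at $\infty$, so it suffices to track the coefficients of $\lm^{4},\lm^{2},\lm,\lm^{0}$ and $\lm^{-1}$.

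To organize the computation I would factor $F(\lm) = -\ii\lm\,(1 - g(\lm))$, where
\[
  g(\lm) \defl \frac{1}{\lm}\sum_{n\ge 1}\frac{\Hm_{n}}{(2\lm)^{n}}
         = \frac{\Hm_{1}}{2}\lm^{-2} + \frac{\Hm_{2}}{4}\lm^{-3} + \frac{\Hm_{3}}{8}\lm^{-4} + \frac{\Hm_{4}}{16}\lm^{-5} + O(\lm^{-6}).
\]
Because $(-\ii)^{4} = 1$, this gives $F^{4}(\lm) = \lm^{4}(1-g)^{4} = \lm^{4}\bigl(1 - 4g + 6g^{2} - 4g^{3} + g^{4}\bigr)$. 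Since $g = O(\lm^{-2})$, the terms $g^{3}$ and $g^{4}$ are $O(\lm^{-6})$ and drop out, while $g^{2} = \frac{\Hm_{1}^{2}}{4}\lm^{-4} + \frac{\Hm_{1}\Hm_{2}}{4}\lm^{-5} + O(\lm^{-6})$. Substituting these into $1 - 4g + 6g^{2}$ and collecting powers of $\lm$ yields
\[
  (1-g)^{4} = 1 - 2\Hm_{1}\lm^{-2} - \Hm_{2}\lm^{-3} - \tfrac{1}{2}(\Hm_{3}-3\Hm_{1}^{2})\lm^{-4} - \tfrac{1}{4}(\Hm_{4}-6\Hm_{1}\Hm_{2})\lm^{-5} + O(\lm^{-6}),
\]
and multiplying by $\lm^{4}$ gives exactly the asserted expansion.

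There is no genuine obstacle in this argument: the one point that deserves a word of justification is the legitimacy of multiplying the Laurent series term by term, which is immediate from its absolute convergence on a punctured neighbourhood of $\infty$, and the remainder is pure bookkeeping — one simply verifies which monomials in $\Hm_{1},\dots,\Hm_{4}$ survive down to order $\lm^{-1}$, namely only $\Hm_{1}$, $\Hm_{2}$, $\Hm_{3}-3\Hm_{1}^{2}$ and $\Hm_{4}-6\Hm_{1}\Hm_{2}$, with the coefficients displayed above. (The analogous expansion of $F^{3}$, if needed, is obtained in the same way using $(1-g)^{3} = 1 - 3g + 3g^{2} - g^{3}$ and $F^{3} = \ii\lm^{3}(1-g)^{3}$.)
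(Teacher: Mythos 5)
Your proof is correct and is exactly the argument the paper intends: the corollary is stated as an immediate consequence of the expansion~\eqref{F-exp}, obtained by raising that Laurent series to the fourth power, and your bookkeeping via $F = -\ii\lm(1-g)$ with $(1-g)^4 = 1 - 4g + 6g^2 + O(\lm^{-6})$ reproduces all the stated coefficients, including $-\tfrac12(\Hm_3 - 3\Hm_1^2)$ and $-\tfrac14(\Hm_4 - 6\Hm_1\Hm_2)$.
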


We conclude this section by refining the asymptotics $\sup_{\lm \in G_{n}^{+}\cup G_{n}^{-}}\abs{F_{n}(\lm)} = O(\gm_{n})$ of Lemma~\ref{F-prop}.

\begin{lem}
\label{Fn-refined}
For any $2 \le p < \infty$, locally uniformly on $\Ws^{p}$
\[
  \sup_{\lm\in G_{n}^{+}\cup G_{n}^{-}}\abs*{F_{n}(\lm) - \ii \vs_{n}(\lm)} = \gm_{n}(\ell_{n}^{p/2} + \ell_{n}^{1+}).
\]
At the zero potential, the right hand side of the latter identity vanishes, $F_{n}(\lm) = \ii\vs_{n}(\lm) = -\ii \lm + \ii n\pi$.~\fish
\end{lem}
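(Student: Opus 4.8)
The plan is to compare $F_{n}$ and $\ii\vs_{n}$ through their $\lm$-derivatives and then integrate. From the gradient formula $\partial_{\lm}F_{n}=\dDl(\lm)/\sqrt[c]{\Dl^{2}(\lm)-4}$ of Lemma~\ref{F-prop}(i), the factorization~\eqref{chi-n}, and the elementary identity $\partial_{\lm}\vs_{n}(\lm)=(\lm-\tau_{n})/\vs_{n}(\lm)$ (differentiate $\vs_{n}^{2}=(\lm_{n}^{+}-\lm)(\lm_{n}^{-}-\lm)$), I get, on $U_{n}\setminus G_{n}$,
\[
  \partial_{\lm}\p*{F_{n}(\lm)-\ii\vs_{n}(\lm)}
  = -\frac{\ii}{\vs_{n}(\lm)}\p*{(\lm_{n}^{\ld}-\tau_{n})+(\lm_{n}^{\ld}-\lm)(\chi_{n}(\lm)-1)}.
\]
Since $F_{n}(\lm_{n}^{-})=0$ (Lemmas~\ref{w-closed} and~\ref{F-prop}(ii)) and $\vs_{n}(\lm_{n}^{-})=0$, integrating along an admissible path from $\lm_{n}^{-}$, and passing to the boundary values on $G_{n}^{\pm}$ as in the construction of $F_{n}$, gives
\[
  F_{n}(\lm)-\ii\vs_{n}(\lm)
  = -\ii\int_{\lm_{n}^{-}}^{\lm}\frac{(\lm_{n}^{\ld}-\tau_{n})+(\lm_{n}^{\ld}-\mu)(\chi_{n}(\mu)-1)}{\vs_{n}(\mu)}\,\dmu .
\]
If $\gm_{n}=0$ both sides vanish on $G_{n}=\{\lm_{n}^{\pm}\}$; this also disposes of the zero potential, where every gap is closed, so that $\vs_{n}(\lm)=\tau_{n}-\lm=n\pi-\lm$ while $F_{n}(\lm)=-\ii\lm+\ii n\pi$ by Lemma~\ref{F-prop}(vii). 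So from now on I assume $\gm_{n}\neq 0$.

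Because $\chi_{n}$ is analytic on $U_{n}$ (Corollary~\ref{wm-ana-quot}), the numerator of the integrand is continuous on $G_{n}$, so Lemma~\ref{s-root-reciprocal-estimate}(i) applies and gives
\[
  \sup_{\lm\in G_{n}^{+}\cup G_{n}^{-}}\abs{F_{n}(\lm)-\ii\vs_{n}(\lm)}
  \le \pi\abs{\lm_{n}^{\ld}-\tau_{n}}
  +\pi\p*{\sup_{\mu\in G_{n}}\abs{\lm_{n}^{\ld}-\mu}}\sup_{\lm\in U_{n}}\abs{\chi_{n}(\lm)-1}.
\]
For the first term, \eqref{lmld-tau-2} gives $\abs{\lm_{n}^{\ld}-\tau_{n}}=\gm_{n}^{2}\ell_{n}^{p}=\gm_{n}\cdot(\gm_{n}\ell_{n}^{p})$, and the product of the two $\ell^{p}$-sequences $\gm_{n}$ and $\ell_{n}^{p}$ lies in $\ell^{p/2}$ by Hölder's inequality, so it equals $\gm_{n}\ell_{n}^{p/2}$. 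For the second, $\abs{\lm_{n}^{\ld}-\mu}\le\abs{\lm_{n}^{\ld}-\tau_{n}}+\abs{\tau_{n}-\mu}\le\gm_{n}^{2}\ell_{n}^{p}+\gm_{n}/2=O(\gm_{n})$ on $G_{n}$, locally uniformly. Everything therefore reduces to the refined estimate
\[
  \sup_{\lm\in U_{n}}\abs{\chi_{n}(\lm)-1}=\ell_{n}^{p/2}+\ell_{n}^{1+}
\]
(locally uniformly on $\Ws^{p}$), which sharpens the bound $\ell_{n}^{p}$ used in the proof of Lemma~\ref{F-prop}(iv); granting it, the displayed estimate becomes $\gm_{n}(\ell_{n}^{p/2}+\ell_{n}^{1+})$, as claimed.

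To prove the refined estimate I would write $\chi_{n}(\lm)=\prod_{m\neq n}a_{m}(\lm)$ with $a_{m}(\lm)=(\lm_{m}^{\ld}-\lm)/\vs_{m}(\lm)$ and $\delta_{m}=\lm_{m}^{\ld}-\tau_{m}$. Inserting $\vs_{m}(\lm)=(\tau_{m}-\lm)\sqrt[+]{1-\gm_{m}^{2}/4(\tau_{m}-\lm)^{2}}$ and using $\abs{\tau_{m}-\lm}\ge c^{-1}\abs{m-n}$ on $U_{n}$ from~\eqref{iso-est}, a Taylor expansion yields, for $m\neq n$ and $\lm\in U_{n}$,
\[
  \abs{a_{m}(\lm)-1}\le C\p*{\frac{\abs{\delta_{m}}}{\abs{m-n}}+\frac{\gm_{m}^{2}}{\abs{m-n}^{2}}}
\]
locally uniformly. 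Now $\gm_{m}\in\ell^{p}$ gives $\gm_{m}^{2}\in\ell^{p/2}$, while~\eqref{lmld-tau-2} gives $\delta_{m}=\gm_{m}^{2}\ell_{m}^{p}\in\ell^{p/3}$. Summing over $m\neq n$ and invoking Young's convolution inequality with the kernels $\abs{k}^{-2}\in\ell^{1}$ and $\abs{k}^{-1}\in\ell^{r}$ (for arbitrary $r>1$): the $\gm_{m}^{2}$-part lies in $\ell^{p/2}*\ell^{1}\subset\ell^{p/2}$; the $\delta_{m}$-part lies in $\ell^{p/3}*\ell^{1}\subset\ell^{1+}$ when $2\le p\le 3$ (so that $\ell^{p/3}\subset\ell^{1}$) and in $\ell^{p/3}*\ell^{p'}\subset\ell^{p/2}$ when $p>3$ (using $3/p+1/p'-1=2/p$). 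Hence $\sum_{m\neq n}\abs{a_{m}(\lm)-1}\in\ell_{n}^{p/2}+\ell_{n}^{1+}$ locally uniformly, and since this sum is bounded (Corollary~\ref{wm-ana-quot}), the elementary bound $\abs*{\prod_{m\neq n}a_{m}-1}\le\e^{\sum_{m\neq n}\abs{a_{m}-1}}\sum_{m\neq n}\abs{a_{m}-1}$ puts $\abs{\chi_{n}(\lm)-1}$ in $\ell_{n}^{p/2}+\ell_{n}^{1+}$ as well.

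The hard part is this last estimate on $\chi_{n}-1$: the crude bound $\ell_{n}^{p}$ is genuinely too weak, since $\abs{\lm_{n}^{\ld}-\mu}$ is of size $\gm_{n}$ on $G_{n}$ in general, so one must exploit the \emph{quadratic} smallness of the individual factors $a_{m}-1$ together with the off-diagonal decay~\eqref{iso-est} and Young's inequality, carefully tracking the exponents uniformly in $p\in[2,\infty)$ -- notably the case split at $p=3$ for the $\delta_{m}$-contribution. Everything else is routine bookkeeping, to be carried out locally uniformly on $\Ws^{p}$.
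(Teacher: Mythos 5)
Your proof is correct and follows essentially the same route as the paper: both rest on the product representation $\partial_{\lm}F_{n}=-\ii\frac{\lm_{n}^{\ld}-\lm}{\vs_{n}(\lm)}\chi_{n}(\lm)$, the identity $\partial_{\lm}\vs_{n}=-\frac{\tau_{n}-\lm}{\vs_{n}}$, the estimate $\lm_{n}^{\ld}-\tau_{n}=\gm_{n}^{2}\ell_{n}^{p}$ from~\eqref{lmld-tau-2}, the bound $\sup_{\lm\in U_{n}}\abs{\chi_{n}(\lm)-1}=\ell_{n}^{p/2}+\ell_{n}^{1+}$, and Lemma~\ref{s-root-reciprocal-estimate} to control the integrals over $G_{n}^{\pm}$. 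The only difference is that the paper obtains the $\chi_{n}-1$ estimate directly from the appendix Lemma~\ref{ana-quot-w-lp} (applied with $\sg=\lm^{\ld}$, $\sg-\tau\in\ell^{p/3}$), whereas you re-derive that special case by hand via Taylor expansion and Young's inequality; your derivation is sound, just redundant given the cited lemma.
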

\begin{proof}
With~\eqref{chi-n} write $F_{n}$ in the form
\[
  F_{n}(\lm)
   = \int_{\lm_{n}^{-}}^{\lm} \frac{\dDl(\mu)}{\sqrt[c]{\Dl^{2}(\mu)-4}}\,\dmu
   = -\ii\int_{\lm_{n}^{-}}^{\lm} \frac{\lm_{n}^{\ld}-\mu}{\vs_{n}(\mu)} \chi_{n}(\mu) \,\dmu,
  \qquad
  \chi_{n}(\lm) = \prod_{m\neq n} \frac{\lm_{m}^{\ld}-\lm}{\vs_{m}(\lm)}.
\]
By~\eqref{lmld-tau-2}, $\lm_{n}^{\ld}-\tau_{n} = \gm_{n}^{2}\ell_{n}^{p}$, hence Lemma~\ref{ana-quot-w-lp} gives
$\sup_{\lm\in U_{n}}\abs{\chi_{n}(\lm) - 1} = \ell_{n}^{p/2} + \ell_{n}^{1+}$. As an immediate consequence we obtain from Lemma~\ref{s-root-reciprocal-estimate} that
\[
  \sup_{\lm\in G_{n}^{+}\cup G_{n}^{-}}
  \abs*{F_{n}(\lm) - (-\ii)\int_{\lm_{n}^{-}}^{\lm} \frac{\lm_{n}^{\ld}-\mu}{\vs_{n}(\mu)} \,\dmu}
  \le \max_{\lm\in G_{n}^{+}\cup G_{n}^{-}} \abs*{(\lm_{n}^{\ld}-\lm)(\chi_{n}(\lm)-1)}
   = \gm_{n}(\ell_{n}^{p/2}+\ell_{n}^{1+}).
\]
One further checks that $\partial_{\lm} \vs_{n}(\lm) = -\frac{\tau_{n}-\lm}{\vs_{n}(\lm)}$ for $\lm\notin G_{n}$, hence
\[
  -\ii\int_{\lm_{n}^{-}}^{\lm} \frac{\lm_{n}^{\ld}-\xi}{\vs_{n}(\xi)} \,\dxi =
  \ii\vs_{n}(\lm) +
  \ii(\tau_{n}-\lm_{n}^{\ld})\int_{\lm_{n}^{-}}^{\lm} \frac{1}{\vs_{n}(\xi)} \,\dxi.
\]
If $\gm_{n} = 0$, then $\tau_{n} = \lm_{n}^{\ld}$ and the claim is evident.
On the other hand, if $\gm_{n}\neq 0$, then Lemma~\ref{s-root-reciprocal-estimate} gives $\sup_{\lm\in G_{n}^{-}\cup G_{n}^{+}}\abs*{\int_{\lm_{n}^{-}}^{\lm} \frac{1}{\vs_{n}(\xi)} \,\dxi} \le \pi$ and the claim follows with the estimate $\tau_{n}-\lm_{n}^{\ld} = \gm_{n}^{2}\ell_{n}^{p}$.\qed
\end{proof}

\section{\boldmath Refined estimate of $\lm_{m}^{\ld}-\sg_{m}^{n}$}
\label{s:refined-estimate}

For $\ph\in \Ws^{p}$, $2\le p < \infty$, we denote by $\psi_{n}$, $n\in\Z$, the entire function of the form
\begin{equation}
  \label{psi-form}
  \psi_{n}(\lm) = -\frac{2}{\pi_{n}}\prod_{k\neq n}\frac{\sg_{k}^{n}-\lm}{\pi_{k}},
  \qquad
  \sg_{k}^{n} = k\pi + \ell_{k}^{p},
\end{equation}
which is characterized by the property that
\begin{equation}
  \label{psi-int-eqn}
  \frac{1}{2\pi}\int_{\Gm_{k}} \frac{\psi_{n}(\lm)}{\sqrt[c]{\Dl^{2}(\lm)-4}}\,\dlm = \dl_{nk}.
\end{equation}
The roots of $\psi_{n}$ are precisely the complex numbers $\sg_{k}^{n}$, $k\neq n$, and can be shown to satisfy
\begin{equation}
  \label{psi-root-asymptotics}
  \sg_{k}^{n} - \tau_{k} = \gm_{k}^{2}\ell_{k}^{p}[n]
\end{equation}
uniformly in $n$ and locally uniformly on $\Ws^{p}$ -- see \cite{Molnar:2016uq}.
The expression $\ell_{k}^{p}[n]$, $k\in\Z$, means that there exists a sequence $\al_{k}^{n}$, $k\in\Z$, whose elements may depend on $n\in\Z$, such that $\sg_{k}^{n} = \tau_{k} + \gm_{k}^{2}\al_{k}^{n}$ where
\begin{equation}
  \label{ell-k-n}
  \sum_{k\in\Z} \abs{\al_{k}^{n}}^{p} \le C,
\end{equation}
and $C > 0$ can be chosen uniformly in $n$ and locally uniformly on $\Ws^{p}$.
Further, it turns out to be convenient to set
\begin{equation}
  \label{sgnn}
  \sg_{n}^{n} \defl \lm_{n}^{\ld}.
\end{equation}

According to~\eqref{psi-root-asymptotics} and~\eqref{lmld-tau-2}
\begin{equation}
  \label{sg-k-lm-k}
    \sg_{k}^{n}-\lm_{k}^{\ld}
   = (\sg_{k}^{n}-\tau_{k}) + (\tau_{k}-\lm_{k}^{\ld})
   = \gm_{k}^{2}\ell_{k}^{p}[n],
   \qquad k\neq n.
\end{equation}
The purpose of this subsection is to improve on these estimates.

\begin{prop}
\label{prop:sg-lm}
Locally uniformly on $\Ws^{p}$, $2 < p < \infty$,
\[
  \sg_{k}^{n}-\lm_{k}^{\ld} = \gm_{k}\ell_{k}^{1+}[n].
\]
In more detail, there exists a sequence $\al_{k}^{n}$ so that $\sg_{k}^{n} = \lm_{k}^{\ld} + \gm_{k}\al_{k}^{n}$ where for any $q > 1$
\[
  \sum_{k\in\Z} \abs{\al_{k}^{n}}^{q} \le C_{q},
\]
and $C_{q} > 0$ can be chosen uniformly in $n$ and locally uniformly on $\Ws^{p}$.\fish
\end{prop}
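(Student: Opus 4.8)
The goal is to upgrade the estimate $\sg_k^n - \lm_k^\ld = \gm_k^2\ell_k^p[n]$ from \eqref{sg-k-lm-k} to $\sg_k^n - \lm_k^\ld = \gm_k\ell_k^{1+}[n]$, i.e.\ to trade one power of $\gm_k$ for a weaker summability exponent (from $p$ to any $q>1$). The natural approach is to exploit the two characterizing integral identities for $\psi_n$ and $\dDl$: by \eqref{psi-int-eqn}, $\frac{1}{2\pi}\int_{\Gm_k}\frac{\psi_n(\lm)}{\sqrt[c]{\Dl^2(\lm)-4}}\,\dlm = \dl_{nk}$, while from \eqref{om} and Lemma~\ref{int-wm-quot-est}(iii) the function $\dDl/\sqrt[c]{\Dl^2-4} = -\ii\prod_m \frac{\lm_m^\ld-\lm}{\vs_m(\lm)}$ satisfies $\frac{1}{2\pi}\int_{\Gm_k}\frac{\dDl(\lm)}{\sqrt[c]{\Dl^2(\lm)-4}}\,\dlm = 0$ (Lemma~\ref{w-closed}(ii)). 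So for $k\ne n$ the difference $\psi_n - (\text{suitable multiple of }\dDl)$ integrates against $1/\sqrt[c]{\Dl^2-4}$ to zero on $\Gm_k$. Writing both functions as products over the roots and pulling out the factor carried by $\vs_k$, the plan is to compare $\psi_n$ and $\dDl$ as multiples of $\dDl/\sqrt[c]{\Dl^2-4}$ on the disc $U_k$, and read off that the root $\sg_k^n$ of $\psi_n$ must be close to the root $\lm_k^\ld$ of $\dDl$.

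\textbf{Key steps.} First I would localize: fix $k\ne n$ and work on $U_k$. Write $\psi_n(\lm) = -\tfrac{2}{\pi_k}(\sg_k^n-\lm)\prod_{j\ne k,n}\frac{\sg_j^n-\lm}{\pi_j}$ and $\dDl(\lm) = \tfrac{2}{\pi_k}(\lm_k^\ld-\lm)\prod_{j\ne k}\frac{\lm_j^\ld-\lm}{\pi_j}$ (using $\sg_n^n = \lm_n^\ld$ from \eqref{sgnn}, the two products run over the same index set $j\ne k$ once we absorb $j=n$). Second, divide both by $\vs_k(\lm)$ and by $\sqrt[c]{\Dl^2(\lm)-4} = 2\ii\prod_m \vs_m/\pi_m$; the point is that $\dDl/\sqrt[c]{\Dl^2-4} = -\ii\frac{\lm_k^\ld-\lm}{\vs_k(\lm)}\chi_k(\lm)$ with $\chi_k$ as in \eqref{chi-n}, and analogously $\psi_n/\sqrt[c]{\Dl^2-4} = -\ii\frac{\sg_k^n-\lm}{\vs_k(\lm)}\,\eta_k^n(\lm)$ where $\eta_k^n(\lm) = \prod_{j\ne k,n}\frac{\sg_j^n-\lm}{\vs_j(\lm)}\cdot\frac{\pi_n}{\vs_n(\lm)}$ (again folding in $j=n$), and by \eqref{psi-root-asymptotics}, \eqref{lmld-tau-2} together with the $\ell^{1+}$-quotient estimates (Lemma~\ref{ana-quot-w-lp}, used in the proof of Lemma~\ref{Fn-refined}) one has $\sup_{\lm\in U_k}|\eta_k^n(\lm) - \chi_k^{\;?}| = \ell_k^{1+}[n]$ — more precisely both $\eta_k^n$ and $\chi_k$ differ from $1$ by $\ell_k^{p/2}+\ell_k^{1+} = \ell_k^{1+}[n]$ uniformly on $U_k$. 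Third, apply the vanishing integral identities: subtracting $\dDl$ from the appropriate normalization of $\psi_n$, one gets
\[
  0 = \frac{1}{2\pi}\int_{\Gm_k}\frac{\psi_n(\lm) - c_k^n\,\dDl(\lm)}{\sqrt[c]{\Dl^2(\lm)-4}}\,\dlm
    = \frac{-\ii}{2\pi}\int_{\Gm_k}\frac{(\sg_k^n-\lm)\eta_k^n(\lm) - c_k^n(\lm_k^\ld-\lm)\chi_k(\lm)}{\vs_k(\lm)}\,\dlm,
\]
where $c_k^n$ is chosen so the leading coefficients match (so that the numerator, as a function analytic on $U_k$, has a controlled value at $\lm=\tau_k$). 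Using Lemma~\ref{int-wm-quot-est}(ii)--(iii) to evaluate these integrals against $1/\vs_k$ — which picks out essentially the value of the analytic numerator at the branch points / the $-\dl$-normalization — yields an identity of the form $\sg_k^n - \lm_k^\ld = (\text{error terms})$, where each error term is a product of $(\lm_k^\ld - \sg_k^n)$ or a gap length with a factor of size $\ell_k^{1+}[n]$. Fourth, solve for $\sg_k^n - \lm_k^\ld$: the coefficient of $\sg_k^n - \lm_k^\ld$ on the left is $1 + \ell_k^{1+}[n]$, which is invertible for $|k|$ large, and the right-hand side is $\gm_k\cdot\ell_k^{1+}[n]$ because the only way the difference of the two products can be nontrivial on $U_k$ is through the $\gm_k$-sized discrepancy in the standard root near $G_k$ (this is where the extra power of $\gm_k$ comes from, exactly as in the passage from $O(\gm_k^2)$ to $O(\gm_k)$ in Lemma~\ref{Fn-refined}). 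Finally, handle the finitely many small $|k|$ and the uniformity in $n$ by absorbing them into the constant $C_q$, and conclude $\sum_k|\al_k^n|^q\le C_q$ for every $q>1$.

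\textbf{Main obstacle.} The delicate point is step three: correctly identifying what the contour integrals against $1/\vs_k$ evaluate to when the numerator is analytic on $U_k$ versus when it has the $(\sg_k^n-\lm)$ or $(\lm_k^\ld-\lm)$ factor, and bookkeeping the index-shift that folds $j=n$ into the product (so that $\psi_n$, which is \emph{missing} the $n$th factor, and $\dDl$, which is not, can be compared on $U_k$ for $k\ne n$). One must be careful that all the quotient estimates are uniform in $n$ — this is exactly the content of the bracket notation $\ell_k^{p}[n]$ and the hypothesis \eqref{ell-k-n} — and that the $\ell^{p/2}\hookrightarrow\ell^{1+}$ type improvement (which requires $p>2$, hence the hypothesis $2<p<\infty$ rather than $p\ge 2$) is applied at the right place. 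Everything else is a routine reprise of the product-estimate machinery already set up for Lemma~\ref{Fn-refined}.
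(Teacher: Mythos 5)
Your starting point is the right one and matches the paper's: characterize $\sg_k^n$ and $\lm_k^{\ld}$ by the vanishing of the contour integrals of $\psi_n/\sqrt[c]{\Dl^{2}-4}$ and $\dDl/\sqrt[c]{\Dl^{2}-4}$ over $\Gm_k$, pull out the factor $1/\vs_k$, and compare the remaining products $\zt_k$ and $\chi_k$ on $U_k$ (this is exactly the identity~\eqref{zt-k-identity} in the paper). But the core of the argument is missing, and the step on which you rest the conclusion uses a false identity: you assert that $\zt_k$ and $\chi_k$ each differ from $1$ by $\ell_k^{p/2}+\ell_k^{1+}=\ell_k^{1+}[n]$. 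For $p>2$ the class $\ell^{p/2}$ is strictly larger than $\bigcap_{q>1}\ell^{q}$, so $\ell_k^{p/2}+\ell_k^{1+}$ is only $\ell_k^{p/2}$, not $\ell_k^{1+}$; likewise the ``embedding'' $\ell^{p/2}\subset\ell^{1+}$ you invoke at the end goes in the wrong direction. Estimating $\zt_k-\chi_k$ through the individual deviations of $\zt_k$ and $\chi_k$ from $1$ can therefore never do better than $\ell_k^{p/2}$, which merely reproduces the known bound~\eqref{sg-k-lm-k} and not the claimed $\gm_k\ell_k^{1+}[n]$.

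The two ingredients you are missing are the following. First, the difference $\zt_k-\chi_k$ must be estimated directly, by writing it as $\int_0^1 f_k(\lm,\tilde\al^{t})\sum_{m\neq k}\frac{\sg_m^{n}-\lm_m^{\ld}}{\al_m^{t}-\lm}\,\dt$ along the interpolating sequence $\al^{t}=(1-t)\sg^{n}+t\lm^{\ld}$, so that the modified discrete Hilbert transform bound of Lemma~\ref{A-trans} converts summability of $(\sg_m^{n}-\lm_m^{\ld})_{m}$ in $m$ into summability of the difference in $k$. Second, a bootstrap: one pass of this argument, starting from $\sg_m^{n}-\lm_m^{\ld}=\gm_m\ell_m^{q_1}[n]$ with $q_1=p/2$ and using H\"older with $\gm\in\ell^{p}$, yields only the new exponent $q_2=pq_1/(p+q_1)<q_1$, not $1+$. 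The conclusion is reached after finitely many iterations $q_{j+1}=pq_j/(p+q_j)=p/(j+1)$, which eventually drops to $1$ or below and places the sequence in $\ell^{q}$ for every $q>1$. Without the self-improving iteration, and the Hilbert-transform step that powers it, the proposal cannot close.
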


\begin{rem}
Note that in the case $p=2$, the estimate~\eqref{sg-k-lm-k} implies that $\sg_{k}^{n} - \lm_{k}^{\ld} = \gm_{k}\ell_{k}^{1}[n]$.\map
\end{rem}

\begin{proof}
In view of~\eqref{psi-form} and~\eqref{c-root} for any $n,k\ge 1$
\begin{equation}
  \label{psi-c-root-quot}
  \frac{\psi_{n}(\lm)}{\sqrt[c]{\Dl^{2}(\lm)-4}}
   =
   \frac{\ii}{\vs_{n}(\lm)} \frac{\sg_{k}^{n}-\lm}{\sg_{n}^{n}-\lm} \zt_{k}(\lm),\qquad
  \zt_{k}(\lm) = \prod_{m\neq k} \frac{\sg_{m}^{n}-\lm}{\vs_{m}(\lm)},
\end{equation}
where the function $\zt_{k}$ is analytic on $U_{k}$.
By~\eqref{psi-int-eqn}, the roots $\sg_{k}^{n}$, $k\neq n$, of $\psi_{n}$ are characterized by the equation
\begin{equation}
  \label{psi-1}
  0
   = \int_{\Gm_{k}}\frac{\psi_{n}(\lm)}{\sqrt[c]{\Dl^{2}(\lm)-4}}\,\dlm
   = \ii\int_{\Gm_{k}}\frac{\sg_{k}^{n}-\lm}{\sg_{n}^{n}-\lm} \frac{\zt_{k}(\lm)}{\vs_{k}(\lm)}\,\dlm,\qquad
   k\neq n.
\end{equation}
It implies that
\begin{equation}
  \label{zt-k-identity}
    (\sg_{k}^{n}-\lm_{k}^{\ld})
  \int_{\Gm_{k}}\frac{1}{\sg_{n}^{n}-\lm} \frac{\zt_{k}(\lm)}{\vs_{k}(\lm)}\,\dlm
  =
  \int_{\Gm_{k}}\frac{\lm - \lm_{k}^{\ld}}{\sg_{n}^{n}-\lm}
  \frac{\zt_{k}(\lm)}{\vs_{k}(\lm)}\,\dlm,
  \qquad k\neq n.
\end{equation}
This identity is the starting point for estimating $\sg_{k}^{n}-\lm_{k}^{\ld}$.
A key step in the proof of the claimed estimate is to rewrite this identity in an appropriate way.
Let us multiply it by $(\sg_{n}^{n}-\lm_{k}^{\ld})$ and introduce
\begin{equation}
  \label{xi-k}
  \xi_{k}(\lm) = \frac{\sg_{n}^{n}-\lm_{k}^{\ld}}{\sg_{n}^{n}-\lm}\zt_{k}(\lm).
\end{equation}
Note that $\sg_{n}^{n}-\lm_{k}^{\ld}\neq 0$ by~\eqref{iso-est}.
It then follows from~\eqref{zt-k-identity} that
\[
  (\sg_{k}^{n}-\lm_{k}^{\ld})
  \int_{\Gm_{k}}\frac{\xi_{k}(\lm)}{\vs_{k}(\lm)}\,\dlm
  =
  \int_{\Gm_{k}}\frac{(\lm - \lm_{k}^{\ld})\xi_{k}(\lm)}{\vs_{k}(\lm)}\,\dlm,
  \qquad k\neq n.
\]
Since by~\eqref{xi-k}, $\xi_{k}(\lm) = \p*{1 + \frac{\lm-\lm_{k}^{\ld}}{\sg_{n}^{n}-\lm}}\zt_{k}(\lm)$, we get
\[
  \int_{\Gm_{k}}\frac{(\lm - \lm_{k}^{\ld})\xi_{k}(\lm)}{\vs_{k}(\lm)}\,\dlm
  =
  \int_{\Gm_{k}}\frac{(\lm-\lm_{k}^{\ld})\zt_{k}(\lm)}{\vs_{k}(\lm)}\,\dlm
  +
  \int_{\Gm_{k}}\frac{(\lm-\lm_{k}^{\ld})^{2}\zt_{k}(\lm)}{(\sg_{n}^{n}-\lm)\vs_{k}(\lm)}\,\dlm.
\]
The second term on the right hand side of the latter identity is expected to be small in comparison to the first term since $\sg_{n}^{n}-\lm$ is of the size of $n-k$. We proceed by writing the first term in a more convenient form. Note that the roots $\lm_{k}^{\ld}$, $k\in\Z$, of $\dDl$ are characterized by the equation
\begin{equation}
  \label{dDl-1}
  0 = \int_{\Gm_{k}} \frac{\dDl(\lm)}{\sqrt[c]{\Dl^{2}(\lm)-4}}\,\dlm
  =
  -\ii
  \int_{\Gm_{k}} 
  \frac{\lm_{k}^{\ld}-\lm}{\vs_{k}(\lm)}\chi_{k}(\lm)\,\dlm,\qquad k\in\Z,
\end{equation}
where $\chi_{k}$ is given by~\eqref{chi-n}. Hence
\[
  \int_{\Gm_{k}}\frac{(\lm-\lm_{k}^{\ld})\zt_{k}(\lm)}{\vs_{k}(\lm)}\,\dlm
  =
  \int_{\Gm_{k}}\frac{(\lm-\lm_{k}^{\ld})(\zt_{k}(\lm)-\chi_{k}(\lm))}{\vs_{k}(\lm)}\,\dlm.
\]
Altogether, identity~\eqref{zt-k-identity} then reads
\begin{equation}
  \label{zt-k-identity-2}
  \begin{split}
  (\sg_{k}^{n}-\lm_{k}^{\ld})
  \int_{\Gm_{k}}\frac{\xi_{k}(\lm)}{\vs_{k}(\lm)}\,\dlm
  &=
  \int_{\Gm_{k}}\frac{(\lm-\lm_{k}^{\ld})(\zt_{k}(\lm)-\chi_{k}(\lm))}{\vs_{k}(\lm)}\,\dlm\\
  &\qquad+
  \int_{\Gm_{k}}\frac{(\lm-\lm_{k}^{\ld})^{2}\zt_{k}(\lm)}{(\sg_{n}^{n}-\lm)\vs_{k}(\lm)}\,\dlm,\qquad k\neq n.
    \end{split}
\end{equation}
The integrals in~\eqref{zt-k-identity-2} are now estimated separately.
First note that for $\lm\in G_{k}$ we have
\[
  \frac{\sg_{n}^{n}-\lm_{k}^{\ld}}{\sg_{n}^{n}-\lm} = 1 + \frac{\lm-\lm_{k}^{\ld}}{\sg_{n}^{n}-\lm},\qquad
  \frac{\lm-\lm_{k}^{\ld}}{\sg_{n}^{n}-\lm} = O\p*{\frac{\gm_{k}}{n-k}}.
\]
By Lemma~\ref{ana-quot-w-lp} we have $\zt_{k}\big|_{G_{k}} = 1 + \ell_{k}^{p}$, implying that
\[
  \xi_{k}\big|_{G_{k}} = (1+\ell_{k}^{p})\zt_{k}\big|_{G_{k}} = 1+\ell_{k}^{p}.
\]
Since by Lemma~\ref{int-wm-quot-est} (iii), $\frac{1}{2\pi\ii}\int_{\Gm_{k}} \frac{1}{\vs_{k}(\lm)}\,\dlm = -1$, we then conclude
\begin{equation}
  \label{xi-k-est-1}
  \frac{1}{2\pi\ii}\int_{\Gm_{k}}\frac{\xi_{k}(\lm)}{\vs_{k}(\lm)}\,\dlm = -1 + \ell_{k}^{p}.
\end{equation}
Concerning the integral $\int_{\Gm_{k}}\frac{(\lm-\lm_{k}^{\ld})(\zt_{k}(\lm)-\chi_{k}(\lm))}{\vs_{k}(\lm)}\,\dlm$, it follows from Lemma~\ref{int-wm-quot-est} that
\begin{equation}
  \label{xi-k-est-2}
  \abs*{\frac{1}{2\pi}\int_{\Gm_{k}}\frac{(\lm-\lm_{k}^{\ld})(\zt_{k}(\lm)-\chi_{k}(\lm))}{\vs_{k}(\lm)}\,\dlm}
  \le
  \abs{\gm_{k}}\abs{\zt_{k}(\lm)-\chi_{k}(\lm)}_{G_{k}}.
\end{equation}
Similarly, for the integral $\int_{\Gm_{k}}\frac{(\lm-\lm_{k}^{\ld})^{2}\zt_{k}(\lm)}{(\sg_{n}^{n}-\lm)\vs_{k}(\lm)}\,\dlm$, we get
\begin{equation}
  \label{xi-k-est-3}
  \abs*{\frac{1}{2\pi}\int_{\Gm_{k}}\frac{(\lm-\lm_{k}^{\ld})^{2}\zt_{k}(\lm)}{(\sg_{n}^{n}-\lm)\vs_{k}(\lm)}\,\dlm}
  \le
  \abs*{\frac{(\lm-\lm_{k}^{\ld})^{2}}{\sg_{n}^{n}-\lm}\zt_{k}(\lm)}_{G_{k}}
  = O\p*{\frac{\gm_{k}^{2}}{n-k}}.
\end{equation}
Since $\frac{1}{n-k} = \ell_{k}^{1+}[n]$ we conclude
\begin{equation}
  \label{gm-quot-est}
  O\p*{\frac{\gm_{k}^{2}}{n-k}} = \gm_{k}\ell_{k}^{1}[n].
\end{equation}
Inserting estimates~\eqref{xi-k-est-1}-\eqref{gm-quot-est} into~\eqref{zt-k-identity-2} yields
\begin{equation}
  \label{system-1}
  (\sg_{k}^{n}-\lm_{k}^{\ld})
  (1+\ell_{k}^{p})
  =
  O\p*{\gm_{k}\abs{\zt_{k}(\lm)-\chi_{k}(\lm)}_{G_{k}}}
   + \gm_{k}\ell_{k}^{1}[n].
\end{equation}

To estimate $\abs{\zt_{k}(\lm)-\chi_{k}(\lm)}_{G_{k}}$, write the product expansions~\eqref{psi-c-root-quot} and~\eqref{chi-n}, respectively, as $\zt_{k}(\lm) = f_{k}(\lm,\tilde\al^{1})$ and  $\chi_{k}(\lm) = f_{k}(\lm,\tilde\al^{0})$, where we have set $\al^{1} = (\sg_{m}^{n})$, $\al^{0} = (\lm_{m}^{\ld})$, and
\[
  f_{k}(\lm,\tilde\al) = \prod_{m\neq k}\frac{\al_{m}-\lm}{\vs_{m}(\lm)},\qquad \tilde \al_{n} = \al_{n} - n\pi.
\]
By Corollary~\ref{wm-ana-quot}, the function~$f_{k}$ is analytic on $(\C\setminus\bigcup_{m\neq k} G_{k})\times\ell_{\C}^{p}$ and by Lemma~\ref{ana-quot-w-lp} satisfies the estimate $\abs{f_{k}(\lm,\tilde\al)-1}_{G_{k}} = 1+\ell_{k}^{p}$ locally uniformly on $\ell_{\C}^{p}$. Thus we may write for any $\lm\in G_{k}$,
\[
  \zt_{k}(\lm)-\chi_{k}(\lm)
  = f_{k}(\lm,\tilde\al^{1})-f_{k}(\lm,\tilde\al^{0})
  = \int_{0}^{1} \sum_{m\neq k} \partial_{\al_{m}} f_{k}(\lm,\tilde\al^{t}) (\sg_{m}^{n}-\lm_{m}^{\ld})\,\dt,
\]
where $\al^{t} \defl (\al_{m}^{t}) = ((1-t) \sg_{m}^{n} + t \lm_{m}^{\ld})$. Since for any $m\neq k$ one has
\[
  \partial_{\tilde\al_{m}} f_{k}(\lm,\tilde \al) = \frac{1}{\al_{m}-\lm}f_{k}(\lm,\tilde \al),
\]
we conclude
\[
  \zt_{k}(\lm)-\chi_{k}(\lm)
  =
  \int_{0}^{1} f_{k}(\lm,\tilde \al^{t})\sum_{m\neq k} 
  \frac{\sg_{m}^{n}-\lm_{m}^{\ld}}{\al_{m}^{t}-\lm} \,\dt.
\]
By Lemma~\ref{ana-quot-w-lp}, we can choose $M > 0$ so that $\sup_{0\le t\le 1}\abs{f_{k}(\lm,\tilde \al^{t})}_{G_{k}} \le M$ for all $k\ge1$. Moreover, by the mean value theorem there exists a sequence $(\nu_{k})\subset \C$ with $\nu_{k}\in G_{k}$ such that
\begin{equation}
  \label{zt-k-chi-k-est}
  \abs{\zt_{k}(\lm)-\chi_{k}(\lm)}_{G_{k}} \le
  M\int_{0}^{1}\abs*{\sum_{m\neq k} 
  \frac{\sg_{m}^{n}-\lm_{m}^{\ld}}{\al_{m}^{t}-\nu_{k}}}\,\dt.
\end{equation}
Since $U_{m}$ is a disc and hence convex, we have $\al_{m}^{t}\in U_{m}$ for all $m\ge 1$. Moreover, $\nu_{k}\in G_{k}\subset U_{k}$ for all $k\ge 1$. Thus by~\eqref{iso-est}, there exists $c > 0$ so that
\[
  \abs{\al_{m}^{t}-\nu_{k}} \ge c\abs{m-k},\qquad m\neq k,\quad 0\le t\le 1.
\]

Write the estimate~\eqref{sg-k-lm-k} in the form $\sg_{m}^{n}-\lm_{m}^{\ld} = \gm_{m}\ell_{m}^{q_{1}}[n]$ with $q_{1} = p/2$ and suppose that for some $j\ge 1$
\[
  \sg_{m}^{n}-\lm_{m}^{\ld} = \gm_{m}\ell_{m}^{q_{j}}[n],
  \qquad 1 < q_{j} \le p/2,
\]
then $\sg_{m}^{n}-\lm_{m}^{\ld} = \ell_{m}^{r_{j}}[n]$ with $r_{j} = \frac{p}{p+q_{j}}q_{j}$.
It follows with Lemma~\ref{A-trans} from~\eqref{zt-k-chi-k-est} that
\[
  \abs{\zt_{k}(\lm)-\chi_{k}(\lm)}_{G_{k}} = \gm_{k}(\ell_{k}^{r_{j}}+\ell_{k}^{1+}).
\]
We conclude with~\eqref{system-1} that for all $k$ sufficiently large
\begin{equation}
  \label{sg-k-lm-k-dot-2}
  \begin{split}
  \sg_{k}^{n}-\lm_{k}^{\ld}
  &=
  \gm_{k}(\ell_{k}^{r_{j}}+\ell_{k}^{1+})
  +
  \gm_{k}\ell_{k}^{1}[n]
  = \gm_{k}(\ell_{k}^{q_{j+1}} + \ell_{k}^{1+})
  \end{split}
\end{equation}
where
\[
  q_{j+1} = r_{j} = \frac{p}{p+q_{j}}q_{j} = \frac{p}{j+1}.
\]
Thus, after finitely many iterations, $q_{j+1} \le 1$ and hence $\sg_{k}^{n}-\lm_{k}^{\ld} = \gm_{k}\ell_{k}^{1+}$ as claimed. By going through the arguments of the proof, one verifies that the estimate holds locally uniformly on $\Ws^{p}$.\qed

\end{proof}

\section{Proof of Theorem 1.3}
\label{s:mKdV-frequencies}

In this section we derive, a formula for the frequencies $\om_{n}^{(4)}$, $n\in\Z$, of the mKdV system~\eqref{mkdv-sys} which we then use to study their asymptotics as $\abs{n}\to\infty$. Our starting point is the following identity for the $n$th frequency
\[
  \om_{n}^{(4)} = \pbr{\Hm_{4},\th_{n}},
\]
which a priori holds on $H_{c}^{3/2}\cap (\Ws^{2}\setminus Z_{n})$.

It turns out to be convenient to introduce for any integers $n,k\in\Z$ and $m\ge 0$ the moments
\[
  \Om_{nk}^{(m)}
  \defl \int_{\Gm_{k}} \frac{F_{k}^{m}(\lm)\psi_{n}(\lm)}{\sqrt[c]{\Dl^{2}(\lm)-4}}\,\dlm.
\]
We recall from Section~\ref{s:refined-estimate} the product representation of the quotient $\psi_{n}(\lm)/\sqrt[c]{\Dl^{2}(\lm)-4}$ given by~\eqref{psi-c-root-quot}
\begin{equation}
  \label{zt-ztn}
    \frac{\psi_{n}(\lm)}{\sqrt[c]{\Dl^{2}(\lm)-4}}
  =
  \ii\frac{\zt_{n}(\lm)}{\vs_{n}(\lm)}
  =
  \frac{\sg_{k}^{n}-\lm}{\vs_{k}(\lm)}\zt_{k}^{n}(\lm),
  \qquad
  \zt_{k}^{n}(\lm) =
   \frac{\ii}{\sg_{n}^{n}-\lm}\zt_{k}(\lm),
\end{equation}
where we recall that $\sg_{n}^{n} = \lm_{n}^{\ld}$, and $\zt_{k}(\lm) = \prod_{m\neq k} \frac{\sg_{m}^{n}-\lm}{\vs_{m}(\lm)}$. Note that for any $k\in\Z$, the functions $\zt_{k}$ and $\zt_{k}^{n}$ are analytic on $U_{k}$.

\begin{lem}
\label{Om-nk-prop}
Given any $2 \le p < \infty$, the moments $\Om_{nk}^{(m)}\colon\Ws^{p}\to \C$ have the following properties:

\begin{equivenum}
\item
$\Om_{nk}^{(0)} = 2\pi\dl_{nk}$, for all $n,k\in\Z$.~\fish

\item
Each moment $\Om_{nk}^{(m)}$, $n,k\in\Z$, $m\ge 1$, is analytic on $\Ws^{p}$.

\item
$\Om_{nk}^{(2l+1)} = 0$ for all $n,k\in\Z$ and $l\ge 0$.

\item
If $\gm_{k} = 0$, then $\Om_{nk}^{(m)} = 0$ for all $n\in\Z$ and $m\ge 1$.

\end{equivenum}
\end{lem}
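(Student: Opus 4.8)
The plan is to read part~(i) off the definition and to reduce parts (ii)--(iv) to a contour deformation built on the product representation~\eqref{zt-ztn}. For $m=0$ the integrand is $\psi_n/\sqrt[c]{\Dl^2-4}$, so $\Om_{nk}^{(0)}=2\pi\dl_{nk}$ is precisely the normalization~\eqref{psi-int-eqn} defining $\psi_n$, and nothing further is needed. For the remaining parts the key observation is that by~\eqref{zt-ztn} the quotient $\psi_n(\lm)/\sqrt[c]{\Dl^2(\lm)-4}$ equals $\vs_k(\lm)^{-1}$ times a function $g_k$ that is analytic on $U_k$ --- namely $g_k=(\sg_k^n-\lm)\zt_k^n$ when $k\neq n$ and $g_k=\ii\zt_n$ when $k=n$. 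Hence the integrand of $\Om_{nk}^{(m)}$ is $F_k^m g_k/\vs_k$, and the only sources of non-analyticity across the gap $G_k$ are the two explicit factors $F_k$ and $\vs_k$, both of which flip sign on opposite sides of $G_k$: $\vs_k|_{G_k^+}=-\vs_k|_{G_k^-}$ by~\eqref{s-root-sides}, and $F_k|_{G_k^+}=-F_k|_{G_k^-}$ as shown in the proof of Lemma~\ref{F-prop}(iii).

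For part~(iii) I would argue as follows. If $\gm_k=0$ the claim is immediate: then $G_k$ reduces to a point, $\vs_k(\lm)=\tau_k-\lm$ is entire, $F_k$ is analytic near $\tau_k$, so the integrand is analytic on $U_k$ and integrates to $0$ over $\Gm_k$ by Cauchy. If $\gm_k\neq0$ I would collapse the circuit $\Gm_k$ onto the slit $G_k$. The integrand of $F_k$ is of order $(\lm_k^\pm-\lm)^{-1/2}$ near the endpoints (as noted before Lemma~\ref{F-prop}), so $F_k(\lm)=O\bigl((\lm_k^\pm-\lm)^{1/2}\bigr)$, while $\vs_k(\lm)\sim(\lm_k^\pm-\lm)^{1/2}$ by~\eqref{s-root}; hence $F_k^m/\vs_k=O\bigl((\lm_k^\pm-\lm)^{(m-1)/2}\bigr)$ stays bounded for $m\ge1$, the small arcs around $\lm_k^\pm$ contribute nothing in the limit, and $\Om_{nk}^{(m)}$ becomes the integral over $G_k$ of the difference of the two boundary values of $F_k^m g_k/\vs_k$ on $G_k^+$ and $G_k^-$. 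Since $g_k$ is analytic, hence single-valued, on $U_k$, the sign-flip identities give $(F_k^m g_k/\vs_k)_+=(-1)^{m+1}(F_k^m g_k/\vs_k)_-$, so for odd $m$ the two boundary values coincide, their difference vanishes identically, and $\Om_{nk}^{(2l+1)}=0$. (Equivalently, for odd $m$ the integrand has matching limits on $G_k^\pm$ and is locally bounded, hence extends analytically across $G_k$, so its $\Gm_k$-integral vanishes by Cauchy.)

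For part~(ii), by~(iii) it suffices to treat even exponents $m=2l$, writing $F_k^{2l}=(F_k^2)^l$. Fix $\ph_{0}\in\Ws^p$ with its isolating neighbourhood $V_{\ph_{0}}$ and discs $(U_m)$, and choose the circuit $\Gm_k\subset U_k$ once and for all so that it encircles $G_k(\psi)$ for every $\psi\in V_{\ph_{0}}$ while staying away from all the gaps, which is possible by~\ref{iso-1}. On a fixed neighbourhood of $\Gm_k$ the function $F_k^2$ is jointly analytic in $(\lm,\psi)$ --- this is Lemma~\ref{F-prop}(iii) in the joint form established in~\cite{Molnar:2016uq} --- and $\psi_n$, $1/\vs_k$ and $1/\sqrt[c]{\Dl^2-4}$ are analytic there as well; so the integrand depends analytically on $(\lm,\psi)$, and differentiation under the integral sign shows $\Om_{nk}^{(2l)}$ is analytic on $V_{\ph_{0}}$. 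Since $\ph_{0}$ was arbitrary, since $\Ws^p=\bigcup_{\ph}V_{\ph}$, and since the value of the moment does not depend on the admissible choice of $\Gm_k$ (by Cauchy's theorem in $U_k\setminus G_k$), $\Om_{nk}^{(m)}$ is analytic on $\Ws^p$.

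Finally, for part~(iv) assume $\gm_k=0$, so $G_k=\{\tau_k\}$, $\vs_k(\lm)=\tau_k-\lm$ is entire, and $\lm_k^{\ld}=\tau_k$ by~\eqref{lmld-tau-2}. If $k\neq n$, then~\eqref{psi-root-asymptotics} forces $\sg_k^n=\tau_k$, so $(\sg_k^n-\lm)/\vs_k(\lm)\equiv1$ and the integrand reduces to $F_k^m\zt_k^n$, analytic on $U_k$, whence $\Om_{nk}^{(m)}=0$. If $k=n$, then $\sg_n^n=\lm_n^{\ld}=\tau_n$ and the integrand is $F_n^m\cdot\ii\zt_n/(\tau_n-\lm)$; since $F_n$ is analytic near $\tau_n$ with $F_n(\tau_n)=0$ (Lemma~\ref{F-prop}), the quotient $F_n^m/(\tau_n-\lm)$ is analytic there for $m\ge1$, so the integrand is again analytic on $U_n$ and $\Om_{nn}^{(m)}=0$. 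The step I expect to be the genuine obstacle is the joint analyticity invoked in~(ii): one needs $F_k^2$ --- the square of the abelian integral $F_k$ --- to extend analytically in both variables across the disc $U_k$, not merely in $\lm$ for a fixed potential, which is the technical input taken from~\cite{Molnar:2016uq}; the accompanying endpoint estimate legitimizing the contour collapse in~(iii) is the only other point requiring a little care, and once both are granted the sign arithmetic and the Cauchy arguments are routine.
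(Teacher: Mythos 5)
Your proposal is correct and follows essentially the same route as the paper's proof: (i) is the normalization \eqref{psi-int-eqn}, (ii) rests on the joint analyticity of the integrand on a neighbourhood of $\Gm_{k}$ avoiding the gaps (the technical input from \cite{Molnar:2016uq} that you rightly flag), (iii) uses the sign flip of both $F_{k}$ and the canonical root across $G_{k}$ so that odd powers yield an integrand analytic on all of $U_{k}$, and (iv) uses the degeneration of the product representation when $\gm_{k}=0$ together with $F_{n}(\tau_{n})=0$. The only cosmetic differences are that in (iii) you present the contour collapse onto the slit as the main argument with the paper's analytic-continuation-across-$G_{k}$ argument as a parenthetical, and in (iv) for $k=n$ you observe directly that $F_{n}^{m}/(\tau_{n}-\lm)$ is analytic rather than computing the residue $F_{n}^{2l}(\tau_{n})\zt_{n}(\tau_{n})$ and noting it vanishes.
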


\begin{proof}
(i): The identity follows from the characterization~\eqref{psi-int-eqn} of the functions $\psi_{n}$.

(ii):
Let $\ph\in\Ws^{p}$. We choose circuits $\Gm_{k}$, $k\in\Z$, and open neighborhoods $U_{k}'$ of $\Gm_{k}$ such that $\Gm_{k}$ circles around $G_{k}$ and $\ob{U_{k}'}\subset U_{k}\setminus G_{k}$ for any potential in $V_{\ph}$ with $V_{\ph}$ given as in Section~\ref{s:preliminiaries}. In view of~\eqref{c-root}, Lemma~\eqref{psi-form}, and Lemma~\ref{F-prop}, the integrand $\frac{F_{k}^{m}(\lm)\psi_{n}(\lm)}{\sqrt[c]{\Dl^{2}(\lm)-4}}$ is analytic on $U_{k}'\times V_{\ph}$ for any $k\in\Z$. Consequently, $\Om_{nk}^{(m)}$ is analytic on $V_{\ph}$.

(iii):
The function $F_{k}$ and the canonical root each extend continuously to the two sides $G_{k}^{\pm}$ of the gap $G_{k}$ and take opposite signs there. Consequently, for any $l\ge 0$ the quotient $F_{k}^{2l+1}(\lm)/\sqrt[c]{\Dl^{2}(\lm)-4}$ extends continuously from $U_{k}\setminus G_{k}$ to $U_{k}$ and hence is analytic on all of $U_{k}$.
Together with the fact that $\psi_{n}$ is an entire function, we conclude that $\Om_{nk}^{(2l+1)} = 0$ for all $n,k\in\Z$.

(iv): In view of item (iii) it remains to consider the case $m=2l$, $l\ge 1$, and $n,k\in\Z$ with $\gm_{k} = 0$. 
Suppose $k\neq n$. Since $\sg_{k}^{n} = \tau_{k}$ by \eqref{psi-root-asymptotics}, and $\vs_{k}(\lm) = \tau_{k}-\lm$ in view of~\eqref{s-root}, by~\eqref{psi-c-root-quot} the quotient $\psi_{n}(\lm)/\sqrt[c]{\Dl^{2}(\lm)-4}$ equals $\zt_{k}^{n}(\lm)$ and hence is analytic on $U_{k}$. Since by Lemma~\ref{F-prop} also $F_{k}^{2l}$ is analytic on $U_{k}$, we conclude $\Om_{nk}^{(2l)} = 0$.
Now suppose $k=n$. Since $\zt_{n}$ and $F^{2l}$ are analytic on $U_{n}$ and $\vs_{n}(\lm) = \tau_{n}-\lm$, we have in view of~\eqref{psi-c-root-quot} and Cauchy's Theorem that $\Om_{nn}^{(2)} = 2\pi F_{n}^{2l}(\tau_{n}) \zt_{n}(\tau_{n})$. Since $\gm_{n}=0$ we find by Lemma~\ref{F-prop} that $F_{n}(\tau_{n}) = F_{n}(\lm_{n}^{\pm}) = 0$ proving the claim.~\qed
\end{proof}

\begin{lem}
\label{om-Om-4}
For any finite gap potential of real type and any $n\in\Z$
\begin{equation}
  \label{om-star-Om-nk-4}
  \om_{n}^{(4)\star} = \om_{n}^{(4)} - 6\Hm_{2} - 12n\pi \Hm_{1} - (2n\pi)^{3} = -12\sum_{k\in\Z} k\Om_{nk}^{(2)}.~\fish
\end{equation}
\end{lem}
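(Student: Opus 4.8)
The first identity is the definition~\eqref{omn-4-star} of $\om_{n}^{(4)\star}$, so only the second one has to be proven. The plan is to express $\Hm_{4}$ through a contour integral of $F^{4}$, differentiate it in the Birkhoff action $I_{n}$, and then contract the contour onto the gaps, where the powers of $F$ produce the moments $\Om_{nk}^{(2)}$. Fix a finite gap potential $\ph$ of real type and assume for the moment that $\gm_{n}(\ph)\neq 0$. By~\eqref{F-exp} and Corollary~\ref{cor:F3-F4}, $F^{4}$ is analytic outside a disc of finite radius $R_{0}$, and reading off its $\lm^{-1}$-coefficient from Corollary~\ref{cor:F3-F4} gives, for every $R > R_{0}$,
\[
  \Hm_{4} = 6\Hm_{1}\Hm_{2} + \frac{2\ii}{\pi}\int_{\abs{\lm}=R}F^{4}(\lm)\,\dlm ,
\]
the circle $\abs{\lm}=R$ being oriented counter-clockwise (only the $\lm^{-1}$-term of the Laurent series contributes, so the right-hand side is independent of $R$). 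Since $\om_{n}^{(4)} = \partial_{I_{n}}\Hm_{4}$, $\Hm_{1} = \sum_{m}I_{m}$ and $\Hm_{2} = \sum_{m}(2m\pi)I_{m}$ (hence $\partial_{I_{n}}\Hm_{1} = 1$, $\partial_{I_{n}}\Hm_{2} = 2n\pi$), differentiating under the integral sign -- legitimate because $F(\lm)$ is jointly analytic in $(\lm,\ph)$ there and depends on $\ph$ only through the actions -- yields
\[
  \om_{n}^{(4)\star} = \partial_{I_{n}}\Hm_{4} - 6\Hm_{2} - 12n\pi\Hm_{1} - (2n\pi)^{3}
  = \frac{8\ii}{\pi}\int_{\abs{\lm}=R}F^{3}(\lm)\,\partial_{I_{n}}F(\lm)\,\dlm - (2n\pi)^{3}.
\]

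The crucial input is the identity $\partial_{I_{n}}F(\lm) = -\tfrac12\,\psi_{n}(\lm)\big/\sqrt[c]{\Dl^{2}(\lm)-4}$. To prove it, set
\[
  g(\lm) \defl \partial_{I_{n}}F(\lm) + \tfrac12\,\frac{\psi_{n}(\lm)}{\sqrt[c]{\Dl^{2}(\lm)-4}}.
\]
Both summands are analytic in $\lm$ on $\C\setminus\bigcup_{\gm_{m}\neq 0}G_{m}$. Differentiating the formula $I_{k} = -\frac1\pi\int_{\Gm_{k}}F\,\dlm$ of Lemma~\ref{F-prop}(v) in $I_{n}$ and comparing with the normalization~\eqref{psi-int-eqn} shows $\int_{\Gm_{k}}g(\lm)\,\dlm = 0$ for every $k\in\Z$. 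The expansions~\eqref{F-exp} and~\eqref{zt-ztn} give $g(\lm) = O(\lm^{-2})$ as $\abs{\lm}\to\infty$. Finally, across each open gap $G_{m}$ the function $F_{m} = F + \ii m\pi$ changes sign (see Lemma~\ref{F-prop}(iii)) and $\sqrt[c]{\Dl^{2}-4}$ changes sign, so $\partial_{I_{n}}F$ and $\psi_{n}/\sqrt[c]{\Dl^{2}-4}$, hence $g$, change sign there; consequently $g(\lm)\,\dlm$ extends to a holomorphic differential on the spectral curve of $\ph$, which is of finite genus since $\ph$ is a finite gap potential. A holomorphic differential all of whose periods $\int_{\Gm_{k}}g\,\dlm$ over the gap cycles vanish is identically zero, so $g\equiv 0$. (At $\ph = 0$ both sides of the identity equal $\ii/2(\lm-n\pi)$, a convenient consistency check.)

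Inserting this identity, $\om_{n}^{(4)\star} = -\tfrac{4\ii}{\pi}\int_{\abs{\lm}=R}F^{3}(\lm)\psi_{n}(\lm)\big/\sqrt[c]{\Dl^{2}(\lm)-4}\,\dlm - (2n\pi)^{3}$. The integrand is single-valued and analytic on $\C\setminus\bigcup_{\gm_{m}\neq 0}G_{m}$, so the circle may be contracted to the finite sum $\sum_{m\in\Z}\int_{\Gm_{m}}$, the terms with $\gm_{m} = 0$ contributing nothing. On each $\Gm_{m}$ we substitute $F = F_{0} = F_{m} - \ii m\pi$ (Lemma~\ref{F-prop}(ii)), expand $F_{0}^{3} = F_{m}^{3} - 3\ii m\pi\,F_{m}^{2} - 3m^{2}\pi^{2}\,F_{m} + \ii m^{3}\pi^{3}$, observe that $\int_{\Gm_{m}}F_{m}^{j}\psi_{n}\big/\sqrt[c]{\Dl^{2}-4}\,\dlm = \Om_{nm}^{(j)}$, and use Lemma~\ref{Om-nk-prop} ($\Om_{nm}^{(3)} = \Om_{nm}^{(1)} = 0$, $\Om_{nm}^{(0)} = 2\pi\dl_{nm}$) to obtain
\[
  \int_{\Gm_{m}}\frac{F_{0}^{3}(\lm)\,\psi_{n}(\lm)}{\sqrt[c]{\Dl^{2}(\lm)-4}}\,\dlm
  = -3\ii m\pi\,\Om_{nm}^{(2)} + 2\ii m^{3}\pi^{4}\,\dl_{nm}.
\]
Summing over $m$ and recalling $(2n\pi)^{3} = 8n^{3}\pi^{3}$ gives
\[
  \om_{n}^{(4)\star}
  = -\frac{4\ii}{\pi}\Bigl(-3\ii\pi\sum_{m\in\Z}m\,\Om_{nm}^{(2)} + 2\ii n^{3}\pi^{4}\Bigr) - (2n\pi)^{3}
  = -12\sum_{m\in\Z}m\,\Om_{nm}^{(2)}.
\]
The hypothesis $\gm_{n}\neq 0$ is removed by continuity, as both sides are real-analytic functions of the actions (Lemma~\ref{Om-nk-prop}(ii)) and finite gap potentials of real type with $\gm_{n}\neq 0$ are dense among all such potentials.

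I expect the variation identity $\partial_{I_{n}}F = -\tfrac12\psi_{n}/\sqrt[c]{\Dl^{2}-4}$ to be the only genuinely delicate step, both because $F$ is multivalued across the open gaps and because $\partial_{I_{n}}$ denotes differentiation in the Birkhoff action coordinates rather than in the potential; everything after it is bookkeeping with the Laurent expansions and the vanishing relations of Lemma~\ref{Om-nk-prop}.
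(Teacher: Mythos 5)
Your argument is correct and follows essentially the same route as the paper's proof: represent $\Hm_{4}-6\Hm_{1}\Hm_{2}$ as a contour integral of $F^{4}$ via Corollary~\ref{cor:F3-F4}, differentiate in the $n$th action (the paper writes this as the Poisson bracket with $\th_{n}$), contract the contour onto the open gaps, expand $F^{3}=(F_{k}-\ii k\pi)^{3}$ and invoke Lemma~\ref{Om-nk-prop}, and finally remove the assumption $\gm_{n}\neq 0$ by approximating with finite gap potentials whose set of open gaps is kept fixed. The only substantive difference is that where the paper simply cites $-2\{\Dl(\lm),\th_{n}\}=\psi_{n}(\lm)$ from \cite[Lemma~18.2]{Kappeler:2003up}, you re-derive the equivalent identity $2\,\partial_{I_{n}}F(\lm)=-\psi_{n}(\lm)/\sqrt[c]{\Dl^{2}(\lm)-4}$ by a vanishing-periods argument for a holomorphic differential on the finite-genus spectral curve, which is a correct, self-contained substitute for that citation.
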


\begin{proof}
Suppose $\ph$ is a finite gap potential, then there exists $K\ge 1$ so that $\gm_{k}(\ph) = 0$ for $\abs{k} > K$.
By Corollary~\ref{cor:F3-F4}, the function $F^{4}(\lm)$ is analytic outside a sufficiently large circle $C_{r}$, which encloses all open gaps $G_{k}$, $\abs{k}\le K$, and admits the Laurent expansion
\[
  F^{4}(\lm) = \lm^{4}
   - 2\Hm_{1}\lm^{2}
   - \Hm_{2}\lm
   - \frac{1}{2}(\Hm_{3} - 3\Hm_{1}^{2})
   - \frac{1}{4}\p*{\Hm_{4} - 6\Hm_{1}\Hm_{2} }\frac{1}{\lm}
   + O(\lm^{-2})
\]
Therefore, by the Cauchy Theorem
\[
  -\frac{1}{4}\p*{\Hm_{4} - 6\Hm_{1}\Hm_{2}} = \frac{1}{2\pi\ii}\int_{C_{r}} F^{4}(\lm)\,\dlm.
\]
Consider any $n\in \Z$ with $\gm_{n}(\ph)\neq 0$. Then $\th_{n}\mod \pi$ is analytic near $\ph$. Since $\Hm_{1} = \sum_{m\in\Z} I_{m}$ and $\Hm_{2} = \sum_{m\in\Z} (2m\pi) I_{m}$, one computes that
\begin{align*}
  \om_{n}^{(4)} - 6\Hm_{2} - 12n\pi \Hm_{1} &= \pbr{\Hm_{4} - 6\Hm_{1}\Hm_{2},\th_{n}}\\
  &=
  \frac{4\ii}{2\pi}\int_{C_{r}} \pbr{F^{4}(\lm),\th_{n}}\,\dlm\\
  &=
  \frac{16\ii}{2\pi}\int_{C_{r}} \frac{F^{3}(\lm)\pbr{\Dl(\lm),\th_{n}}}{\sqrt[c]{\Dl^{2}(\lm)-4}}\,\dlm.
\end{align*}
Using $-2\pbr{\Dl(\lm),\th_{n}} = \ps_{n}(\lm)$ -- c.f. \cite[Lemma 18.2]{Kappeler:2003up} -- one thus obtains
\[
  \om_{n}^{(4)} - 6\Hm_{2} - 12n\pi \Hm_{1}
  =
  -\frac{8\ii}{2\pi}\int_{C_{r}} \frac{F^{3}(\lm)\psi_{n}(\lm)}{\sqrt[c]{\Dl^{2}(\lm)-4}}\,\dlm.
\]
On the one hand $F^{3}(\lm)$ is analytic on $\C\setminus\bigcup_{\gm_{k}\neq 0} G_{k}$, while on the other hand for any $k\in\Z$ with $\gm_{k} = 0$ one has $\sg_{k}^{n} = \tau_{k}$ and $\vs_{k}(\lm) = \tau_{k}-\lm$ so that in view of the product representations~\eqref{c-root} and~\eqref{psi-form} the integrand extends analytically to $U_{k}$. Consequently, the integrand is analytic on $\C\setminus\bigcup_{\abs{k}\le K} G_{k}$ and one obtains by contour deformation
\[
  \om_{n}^{(4)} - 6\Hm_{2} - 12n\pi \Hm_{1}
   = -\frac{8\ii}{2\pi} \sum_{\abs{k} \le K}
     \int_{\Gm_{k}} \frac{F^{3}(\lm)\psi_{n}(\lm)}{\sqrt[c]{\Dl^{2}(\lm)-4}}\,\dlm.
\]
Proceeding by expanding $F(\lm)^{3} = (F_{k}(\lm)-\ii k\pi)^{3} = F_{k}^{3}(\lm) - 3\ii (k\pi) F_{k}^{2}(\lm) - 3(k\pi)^{2} F_{k}(\lm) + \ii(k\pi)^{3}$ and using that $\Om_{nk}^{(3)} \equiv \Om_{nk}^{(1)} \equiv 0$ by Lemma~\ref{Om-nk-prop} we thus get
\begin{align*}
  \om_{n}^{(4)} - 6\Hm_{2} - 12n\pi \Hm_{1}
   &= -\frac{8}{2\pi} \sum_{\abs{k} \le K}
   \p*{3(k\pi)\Om_{nk}^{(2)} - (k\pi)^{3}\Om_{nk}^{(0)}}\\
   &= \sum_{k\in\Z} \p*{-12k\Om_{nk}^{(2)} + (2k\pi)^{3}\dl_{kn}},
\end{align*}
where we used in the second line that $\Om_{nk}^{(2)} = 0$ for all $k > K$.
This shows that~\eqref{om-star-Om-nk-4} holds for all $n$ with $\gm_{n}(\ph)\neq 0$.

Now consider any $n\in\Z$ with $\gm_{n}(\ph)=0$, and denote $A = \setdef{k\in\Z}{\gm_{k}(\ph)\neq 0}$. By~\cite{Grebert:2014iq} we can choose a sequence of finite gap potentials $\ph_{l}$ in $H_{r}^{1}$ with $\gm_{k}(\ph_{l}) = \gm_{k}(\ph)$ for $k\neq n$, $\gm_{n}(\ph_{l})\neq 0$, and $\ph_{l}\to \ph$ in $H_{r}^{1}$. In particular, $A^{(l)} \defl \setdef{j\in\Z}{\gm_{j}(\ph_{l})\neq 0}$ is given by $A \cup \setd{n} \defr A^{\natural}$ for any $l\ge 1$. Since each $\Om_{nk}^{(2)}$, $k\in\Z$, is continuous, indeed analytic, on $\Ws^{p}$, and $A^{\natural}$ is finite and independent of $l$, it follows that
\[
  \sum_{k\in\Z} k\Om_{nk}^{(2)}(\ph_{l})
   =
  \sum_{k\in A^{\natural}} k\Om_{nk}^{(2)}(\ph_{l})
   \overset{l\to\infty}{\longrightarrow}
  \sum_{k\in\Z} k\Om_{nk}^{(2)}(\ph)
   =
  \sum_{k\in A^{\natural}} k\Om_{nk}^{(2)}(\ph).
\]
On the other hand, $\op_{n}$ is continuous at $\ph\in H_{r}^{2}$ which shows that~\eqref{om-star-Om-nk-4} holds for all $n\in\Z$.~\qed
\end{proof}

We proceed by deriving decay estimates for $\Om_{nk}^{(2)}$.

\begin{lem}
\label{decay-Omnk2}
Locally uniformly on $\Ws^{p}$, $2 \le p < \infty$,
\[
  \Om_{nk}^{(2)}
  = \frac{\gm_{k}^{3}\ell_{k}^{1+}[n]}{n-k},\quad k\neq n,\qquad
  \Om_{nn}^{(2)}
  = \frac{\gm_{n}^{2}}{4}\p*{\pi + \ell_{n}^{p/2} + \ell_{n}^{1+}}.
\]
In more detail, there exists a sequence $\al_{k}^{n}$, $k\in\Z$, so that $\Om_{nk}^{(2)} = \frac{\gm_{k}^{3}}{n-k}\al_{k}^{n}$ where for any $q > 1$
\[
  \sum_{k\in\Z} \abs{\al_{k}^{n}}^{q} \le C_{q},
\]
and $C_{q} > 0$ can be chosen uniformly in $n$ and locally uniformly on $\Ws^{p}$.\fish
\end{lem}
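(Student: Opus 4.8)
The plan is to evaluate $\Om_{nk}^{(2)}$ starting from the product representation \eqref{zt-ztn}. Writing $\zt_k(\lm)=\prod_{m\ne k}\frac{\sg_m^n-\lm}{\vs_m(\lm)}$, which is analytic on $U_k$, one has
\[
  \Om_{nk}^{(2)}
  = \ii\int_{\Gm_k} \frac{F_k^2(\lm)\,(\sg_k^n-\lm)\,\zt_k(\lm)}{(\sg_n^n-\lm)\,\vs_k(\lm)}\,\dlm .
\]
Three facts feed the estimate: $\abs{\sg_n^n-\lm}\ge c^{-1}\abs{n-k}$ on $U_k$ by \eqref{iso-est}; $\zt_k|_{G_k}=1+\ell_k^p$ by Lemma~\ref{ana-quot-w-lp}, and at the midpoint more precisely $\zt_k(\tau_k)-1=(\zt_k(\tau_k)-\chi_k(\tau_k))+(\chi_k(\tau_k)-1)=\ell_k^{p/2}+\ell_k^{1+}$ by Proposition~\ref{prop:sg-lm} and the estimate on $\chi_k$ from \eqref{chi-n}; and, since $F_k^2$ and $\vs_k^2$ are both analytic on $U_k$, Lemma~\ref{Fn-refined} gives $F_k^2=-\vs_k^2+E_k$ on $U_k$ with $E_k$ analytic and $\max_{G_k}\abs{E_k}=\gm_k^2(\ell_k^{p/2}+\ell_k^{1+})$, where $\vs_k^2(\lm)=(\tau_k-\lm)^2-\gm_k^2/4$.

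I would treat the diagonal $k=n$ first. Substituting $F_n^2=-\vs_n^2+E_n$, collapsing $\Gm_n$ onto $G_n$ (across which $1/\vs_n$ changes sign), and parametrizing $\lm=\tau_n+(t\pm\ii0)\gm_n/2$ with $-1\le t\le1$, so that $-\vs_n^2=\frac{\gm_n^2}{4}(1-t^2)$, one obtains
\[
  \Om_{nn}^{(2)}=\frac{\gm_n^2}{2}\int_{-1}^1\sqrt{1-t^2}\,\zt_n(\tau_n+t\gm_n/2)\,\dt+O\bigl(\gm_n^2\max_{G_n}\abs{E_n}\bigr).
\]
Expanding $\zt_n$ about $\tau_n$ (the odd powers of $t$ integrate to zero), the first term equals $\frac{\pi\gm_n^2}{4}\zt_n(\tau_n)+O(\gm_n^4)$, and with $\zt_n(\tau_n)=1+\ell_n^{p/2}+\ell_n^{1+}$ one collects $\Om_{nn}^{(2)}=\frac{\gm_n^2}{4}(\pi+\ell_n^{p/2}+\ell_n^{1+})$.

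For $k\ne n$ the decisive input is the characterization \eqref{psi-1} of the root $\sg_k^n$, namely $\int_{\Gm_k}\frac{(\sg_k^n-\lm)\zt_k(\lm)}{(\sg_n^n-\lm)\vs_k(\lm)}\,\dlm=0$. Inserting $F_k^2=\gm_k^2/4-(\tau_k-\lm)^2+E_k$, the constant $\gm_k^2/4$ multiplies exactly this vanishing integral, whence
\[
  \Om_{nk}^{(2)}
  = -\ii\int_{\Gm_k}\frac{(\tau_k-\lm)^2(\sg_k^n-\lm)\zt_k(\lm)}{(\sg_n^n-\lm)\vs_k(\lm)}\,\dlm
    +\ii\int_{\Gm_k}\frac{E_k(\lm)(\sg_k^n-\lm)\zt_k(\lm)}{(\sg_n^n-\lm)\vs_k(\lm)}\,\dlm .
\]
In the first integral I would split $\sg_k^n-\lm=(\sg_k^n-\lm_k^\ld)+(\lm_k^\ld-\lm)$: the $(\sg_k^n-\lm_k^\ld)$-term is bounded by Lemma~\ref{int-wm-quot-est}(ii) and, by the refined estimate $\sg_k^n-\lm_k^\ld=\gm_k\ell_k^{1+}[n]$ of Proposition~\ref{prop:sg-lm}, is of the form $\gm_k^3\ell_k^{1+}[n]/(n-k)$; in the $(\lm_k^\ld-\lm)$-term, after collapsing onto $G_k$ and parametrizing, the leading odd-in-$t$ contribution vanishes, and using $\lm_k^\ld-\tau_k=\gm_k^2\ell_k^p$ from \eqref{lmld-tau-2} what remains carries an extra power of $\gm_k$ or of $1/(n-k)$. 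The second integral is estimated directly by Lemma~\ref{int-wm-quot-est}(ii) from $\max_{G_k}\abs{E_k}=\gm_k^2(\ell_k^{p/2}+\ell_k^{1+})$, $\max_{G_k}\abs{\sg_k^n-\lm}=O(\gm_k)$, $\zt_k=O(1)$ and $\abs{\sg_n^n-\lm}\ge c^{-1}\abs{n-k}$. Throughout one verifies, as in the proof of Proposition~\ref{prop:sg-lm}, that the estimates hold uniformly in $n$ and locally uniformly on $\Ws^p$.

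The hard part will be the bookkeeping of the various generic $\ell^r$-sequences: one must confirm at each step that, once the explicit factors $\gm_k^3$ and $1/(n-k)$ are pulled out, the remaining coefficient sequence lies in $\ell^q$ for every $q>1$ uniformly in $n$. This is where the sharp estimate $\sg_k^n-\lm_k^\ld=\gm_k\ell_k^{1+}[n]$ (rather than the cruder \eqref{sg-k-lm-k}), the parity cancellations in the $G_k$-integrals, and the summability gained from the explicit factor $1/(n-k)$ via H\"older all enter; should a single pass fail to reach the exponent $1+$, one iterates the resulting estimate for $\Om_{nk}^{(2)}$ in the spirit of the proof of Proposition~\ref{prop:sg-lm}.
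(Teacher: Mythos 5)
Your treatment of the diagonal term $\Om_{nn}^{(2)}$ is essentially the paper's argument and works (one slip: the error you display should be $O(\max_{G_n}\abs{E_n})$, not $O(\gm_n^2\max_{G_n}\abs{E_n})$, since on the collapsed contour the factor $1/\vs_n$ cancels against $\dlm$; the correct bound $\gm_n^2(\ell_n^{p/2}+\ell_n^{1+})$ is still exactly what the claim tolerates). The off-diagonal case, however, has a genuine gap that no amount of bookkeeping or iteration repairs. After you subtract the vanishing integral $\int_{\Gm_k}\frac{(\sg_k^n-\lm)\zt_k(\lm)}{(\sg_n^n-\lm)\vs_k(\lm)}\,\dlm=0$, every remaining term is controlled only by pointwise bounds on $G_k$ of quantities like $E_k=F_k^2+\vs_k^2=\gm_k^2(\ell_k^{p/2}+\ell_k^{1+})$, $\lm_k^{\ld}-\tau_k=\gm_k^2\ell_k^{p}$ and $\gm_k=\ell_k^{p}$, together with the single factor $1/(n-k)$ coming from $1/(\sg_n^n-\lm)$. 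Once you pull out $\gm_k^3/(n-k)$, the leftover coefficient $\al_k^n$ is a finite product of $\ell^p$-type sequences: the $E_k$-integral leaves $\al_k^n=\ell_k^{p/2}$, and the $(\tau_k-\lm)^2(\lm_k^{\ld}-\lm)$-integral, even after the odd-in-$t$ cancellation, leaves $\al_k^n=\gm_k\,O(1)=\ell_k^{p}$ (from the variation of $\zt_k(\lm)/(\sg_n^n-\lm)$ over $G_k$) plus $\al_k^n=\gm_k\ell_k^{p}=\ell_k^{p/2}$ (from the $\lm_k^{\ld}-\tau_k$ part). For $p>2$ none of these belongs to $\ell^q$ for every $q>1$, so the claimed $\ell_k^{1+}[n]$ is out of reach; worse, for $p>4$ your bound on $(k-n)\Om_{nk}^{(2)}$ is of size $\gm_k^4$, which is not even summable in $k$, so the series $\sum_k k\Om_{nk}^{(2)}$ in Theorem~\ref{mkdv-freq} could not be shown to converge. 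The fallback iteration does not exist here: unlike in Proposition~\ref{prop:sg-lm}, the quantity $\Om_{nk}^{(2)}$ never reappears inside your error terms, so there is nothing to bootstrap.

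The only two sources of $\ell^{1+}$-decay in $k$ in this problem are an honest extra factor $1/(n-k)$ (via $\sum_k\abs{n-k}^{-q}<\infty$ for $q>1$) and the refined estimate $\sg_m^n-\lm_m^{\ld}=\gm_m\ell_m^{1+}[n]$ of Proposition~\ref{prop:sg-lm} fed through a Hilbert-type transform. The paper's proof is engineered so that every off-diagonal piece uses one of these two: it multiplies by $\sg_n^n-\tau_k$, subtracts the \emph{different} exact zero $\int_{\Gm_k}F_k^2\dDl/\sqrt[c]{\Dl^2-4}\,\dlm=0$ (exactness of $F_k^3/3$), and then compares the two infinite products $(\sg_n^n-\lm)\psi_n/\sqrt[c]{\Dl^2-4}=\ii\prod_m(\sg_m^n-\lm)/\vs_m(\lm)$ and $-\dDl/\sqrt[c]{\Dl^2-4}=\ii\prod_m(\lm_m^{\ld}-\lm)/\vs_m(\lm)$ by the mean value theorem in the full root sequences, so that the $\ell^{1+}$ enters through $\sum_{m\neq k}(\sg_m^n-\lm_m^{\ld})/(\al_m^s-\lm)$ and Lemma~\ref{A-trans}. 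Your decomposition invokes Proposition~\ref{prop:sg-lm} only for the single root difference $\sg_k^n-\lm_k^{\ld}$ and discards the global product cancellation, which is precisely the information needed to reach the exponent $1+$.
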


\begin{proof}
We begin with the case $k\neq n$ and prove the estimate
\begin{equation}
  \label{Omnk2-est-1}
  (\sg_{n}^{n}-\tau_{k})\Om_{nk}^{(2)} = \gm_{k}^{3}\ell_{k}^{1}[n]
\end{equation}
using Proposition~\ref{prop:sg-lm}. Since by~\eqref{iso-est}, $\abs{\sg_{n}^{n}-\tau_{k}}\ge c\abs{n-k}$, the claimed estimate $(n-k)\Om_{nk}^{(2)} = \gm_{k}^{3}\ell_{k}^{1}[n]$ then follows immediately. To prove~\eqref{Omnk2-est-1} note that
\[
  (\sg_{n}^{n}-\tau_{k})\Om_{nk}^{(2)}
  =
  \int_{\Gm_{k}} \frac{F_{k}^{2}(\lm) (\sg_{n}^{n}-\tau_{k})\psi_{n}(\lm) }{\sqrt[c]{\Dl^{2}(\lm)-4}}\,\dlm.
\]
Our goal is to compare the two product expansions
\[
  \frac{(\sg_{n}^{n}-\lm)\psi_{n}(\lm) }{\sqrt[c]{\Dl^{2}(\lm)-4}}
   =
  \ii\prod_{m\in \Z} \frac{\sg_{m}^{n}-\lm}{\vs_{m}(\lm)},
  \quad
  \frac{\dDl(\lm)}{\sqrt[c]{\Dl^{2}(\lm)-4}}
   =
  -\ii\prod_{m\in \Z} \frac{\lm_{m}^{\ld}-\lm}{\vs_{m}(\lm)}
\]
By Lemma~\ref{F-prop} the function $F_{k}^{3}(\lm)$ is analytic on $U_{k}\setminus G_{k}$ and we compute
\[
  \partial_{\lm}\p*{ \frac{1}{3} F_{k}^{3}(\lm) }
   = \frac{F_{k}^{2}(\lm)\dDl(\lm)}{\sqrt[c]{\Dl^{2}(\lm)-4}}.
\]
Therefore, in view of existence of the primitive
\[
  \int_{\Gm_{k}} \frac{F_{k}^{2}(\lm)\dDl(\lm)}{\sqrt[c]{\Dl^{2}(\lm)-4}}\,\dlm = 0.
\]
Thus the moment $\Om_{nk}^{(2)}$ may be written in the form
\begin{align*}
  (\sg_{n}^{n}-\tau_{k})\Om_{nk}^{(2)}
  &=
  \int_{\Gm_{k}} \frac{F_{k}^{2}(\lm)\p*{ (\sg_{n}^{n}-\tau_{k})\psi_{n}(\lm) + \dDl(\lm)}}{\sqrt[c]{\Dl^{2}(\lm)-4}}\,\dlm
  =
  I_{k}^{\flat}+ I_{k}^{\natural},
\end{align*}
where the terms $I_{k}^{\flat}$ and $I_{k}^{\natural}$ are defined by
\begin{align*}
   I_{k}^{\flat} &= \int_{\Gm_{k}} \frac{F_{k}^{2}(\lm)\p*{ (\sg_{n}^{n}-\lm)\psi_{n}(\lm) + \dDl(\lm)}}{\sqrt[c]{\Dl^{2}(\lm)-4}}\,\dlm,\\
  I_{k}^{\natural} &= 
  \int_{\Gm_{k}} \frac{F_{k}^{2}(\lm)(\lm-\tau_{k})\psi_{n}(\lm)}{\sqrt[c]{\Dl^{2}(\lm)-4}}\,\dlm.
\end{align*}

We first consider the term $I_{k}^{\natural}$. For $k\neq n$ we have by~\eqref{zt-ztn}
\[
  \frac{\psi_{n}(\lm)}{\sqrt[c]{\Dl^{2}(\lm)-4}} = \frac{\sg_{k}^{n}-\lm}{\vs_{k}(\lm)}\zt_{k}^{n}(\lm).
\]
By Lemma~\ref{ana-quot-w-lp} one has
\[
  (n-k)\zt_{k}^{n}(\lm) = \frac{\ii}{\pi} + \ell_{k}^{p/2}[n] + \ell_{k}^{1+}[n].
\]
Since $\abs{F_{k}}_{G_{k}} = O(\gm_{k})$ by Lemma~\ref{F-prop} and $\abs{\sg_{k}^{n}-\lm}_{G_{k}} = O(\gm_{k})$, we obtain with Lemma~\ref{int-wm-quot-est} that
\begin{align*}
  I_{k}^{\natural} = \int_{\Gm_{k}} \frac{F_{k}^{2}(\lm)(\lm-\tau_{k})\psi_{n}(\lm)}{\sqrt[c]{\Dl^{2}(\lm)-4}}\,\dlm
  &=
  \int_{\Gm_{k}} \frac{F_{k}^{2}(\lm)(\lm-\tau_{k})(\sg_{k}^{n}-\lm)\zt_{k}^{n}(\lm)}{\vs_{k}(\lm)}\,\dlm\\
  &= \frac{\gm_{k}^{4}}{n-k}\p*{\frac{\ii}{\pi} + \ell_{k}^{p/2}[n] + \ell_{k}^{+1}[n]}.
\end{align*}
Clearly, $I_{k}^{\natural} = \gm_{k}^{4}\ell_{k}^{1+}[n]$.

It remains to consider the term $I_{k}^{\flat}$. To this end, recall the product expansions
\[
  \dDl(\lm)
  =
  2\prod_{m\in\Z} \frac{\lm_{m}^{\ld}-\lm}{\pi_{m}},\quad
  \psi_{n}(\lm)
  =
  -\frac{2}{\pi_{n}}\prod_{m\neq n} \frac{\sg_{m}^{n}-\lm}{\pi_{m}},\quad
  \sqrt[c]{\Dl^{2}(\lm)-4}
  =
  2\ii\prod_{m\in \Z} \frac{\vs_{m}(\lm)}{\pi_{m}}.
\]
We may thus write
\[
  -\ii\frac{(\sg_{n}^{n}-\lm)\psi_{n}(\lm) + \dDl(\lm)}{\sqrt[c]{\Dl^{2}(\lm)-4}}
  =
  f(\lm,\tilde\al^{1}) - f(\lm,\tilde\al^{0}),
\]
where $\al^{1} = (\sg_{m}^{n})_{m\in\Z}$, $\al^{0} = (\lm_{m}^{\ld})_{m\in\Z}$, and
\[
  f(\lm,\tilde\al) \defl \frac{\al_{k}-\lm}{\vs_{k}(\lm)}f_{k}(\lm,\tilde\al),\quad
  f_{k}(\lm,\tilde\al) \defl \prod_{m\neq k} \frac{\al_{m}-\lm}{\vs_{m}(\lm)},\quad
  \al_{m} = m\pi + \tilde\al_{m}.
\]
By Corollary~\ref{wm-ana-quot} the functions
$f_{k}\colon (\C\setminus\bigcup_{m\neq k} G_{m})\times \ell_{\C}^{p}\to \C$ and $f\colon (\C\setminus\bigcup_{m\in\Z} G_{m})\times \ell_{\C}^{p}\to \C$ are analytic. One further computes that
\[
  \partial_{\tilde\al_{m}} f(\lm,\tilde\al) =  \frac{\al_{k}-\lm}{\al_{m}-\lm}\frac{f_{k}(\lm,\tilde \al)}{\vs_{k}(\lm)},\quad
  m\neq k,\qquad
  \partial_{\tilde\al_{k}} f(\lm,\tilde\al) =  \frac{f_{k}(\lm,\tilde\al)}{\vs_{k}(\lm)} .
\]
Further, for $0\le s\le 1$, let $\al^{s} = (\al_{m}^{s}) = ((1-s)\sg_{m}^{n} + s\lm_{m}^{\ld})$, then
\begin{align*}
  f(\lm,\tilde\al^{1}) - f(\lm,\tilde\al^{0})
  &=
  \int_{0}^{1}\sum_{m}
  \partial_{\al_{m}} f(\lm,\tilde\al^{s})(\sg_{m}^{n}-\lm_{m}^{\ld})\,\ds\\
  &=
  \int_{0}^{1}
  \p*{\sum_{m\neq k} \frac{\sg_{m}^{n}-\lm_{m}^{\ld}}{\al_{m}^{s}-\lm}}
  \frac{(\al_{k}^{s}-\lm)f_{k}(\lm,\tilde\al^{s})}{\vs_{k}(\lm)}\,\ds\\
  &\qquad +
  (\sg_{k}^{n}-\lm_{k}^{\ld})
  \int_{0}^{1}
  \frac{f_{k}(\lm,\tilde\al^{s})}{\vs_{k}(\lm)}\,\ds.
\end{align*}
Since first $\abs{f_{k}(\lm,\tilde\al^{s})}_{G_{k}}$ is bounded uniformly in $k$ and $0\le s\le 1$ by Lemma~\ref{ana-quot-w-lp}, second $\abs{\al_{k}^{s}-\lm}_{G_{k}} = O(\gm_{k})$ uniformly in $k$ and $0\le s \le 1$, third by~\eqref{iso-est},
\[
  \inf_{\lm\in G_{k}}\abs{\al_{m}^{s}-\lm} \ge c\abs{m-k},\qquad m\neq k,\quad 0\le s\le 1,
\]
fourth $\sg_{m}^{n}-\lm_{m}^{\ld} = \gm_{m}\ell_{m}^{1+}[n]$ by Proposition~\ref{prop:sg-lm}, and fifth $\abs{F_{k}} = O(\gm_{k})$ by Lemma~\ref{F-prop}, we obtain with Lemma~\ref{A-trans} and Lemma~\ref{int-wm-quot-est} that
\[
  I_{k}^{\flat} = \int_{\Gm_{k}}\frac{F_{k}^{2}(\lm)\p*{(\sg_{n}^{n}-\lm)\psi_{n}(\lm) + \dDl(\lm)}}{\sqrt[c]{\Dl^{2}(\lm)-4}} = \gm_{k}^{3}\ell_{k}^{1+}[n].
\]
We thus have shown the claimed estimate~\eqref{Omnk2-est-1}.

It remains to consider the case $k=n$. 
Since $\abs{F_{n}(\lm)}_{G_{n}}$, $\abs{w_{n}(\lm)}_{G_{n}} = O(\abs{\gm_{n}})$, it then follows from Lemma~\ref{Fn-refined} that
\[
  \abs{F_{n}^{2}(\lm)+\vs_{n}^{2}(\lm)}_{G_{n}} = \gm_{n}^{2}(\ell_{n}^{p/2} + \ell_{n}^{1+}).
\]
Moreover, $\zt_{n}(\lm)\big|_{U_{n}} = 1 + \ell_{n}^{p/2} + \ell_{n}^{1+}$ by Lemma~\ref{ana-quot-w-lp}. We thus write
\begin{align*}
  \Om_{nn}^{(2)} 
  &= \ii\int_{\Gm_{n}} \frac{F_{n}^{2}(\lm)\zt_{n}(\lm)}{\vs_{n}(\lm)}\,\dlm\\
  &= \ii\int_{\Gm_{n}} \frac{(-\vs_{n}^{2}(\lm) + F_{n}^{2}(\lm) +\vs_{n}^{2}(\lm))(1+\zt_{n}(\lm)-1)}{\vs_{n}(\lm)}\,\dlm,
\end{align*}
and may apply Lemma~\ref{int-wm-quot-est} to obtain the estimate
\begin{align*}
  &\frac{1}{2\pi}\abs*{
  \Om_{nn}^{(2)}  + \ii\int_{\Gm_{n}} \vs_{n}(\lm)\,\dlm}\\
  &\qquad\le
  \abs{F_{n}^{2}(\lm)+\vs_{n}^{2}(\lm)}_{G_{n}}
  + \abs{\vs_{n}^{2}(\lm)}_{G_{n}}\abs{\zt_{n}(\lm)-1}_{G_{n}}
  + \abs{F_{n}^{2}(\lm)+\vs_{n}^{2}(\lm)}_{G_{n}}\abs{\zt_{n}(\lm)-1}_{G_{n}}\\
  &\qquad = \gm_{n}^{2}(\ell_{n}^{p/2} + \ell_{n}^{1+}).
\end{align*}
To compute the integral $\int_{\Gm_{n}} \vs_{n}(\lm)\,\dlm$ note that if $\gm_{n}=0$ then $\vs_{n}(\lm) = (\tau_{n}-\lm)$ and hence $\int_{\Gm_{n}} \vs_{n}(\lm)\,\dlm = 0$.
On the other hand, if $\gm_{n}\neq 0$, then we may use~\eqref{s-root-sides} to compute
\[
  \int_{\Gm_{n}} \vs_{n}(\lm)\,\dlm
   = \ii \frac{\gm_{n}^{2}}{2} \int_{-1}^{1} \sqrt[+]{1-t^{2}}\,\dt
   = \ii\pi \frac{\gm_{n}^{2}}{4}.
\]
Consequently,
\begin{align*}
  \Om_{nn}^{(2)}
   = \frac{\gm_{n}^{2}}{4}\p*{\pi + \ell_{n}^{p/2}+\ell_{n}^{1+}}.\qed
\end{align*}

\end{proof}

\begin{thm}
\label{mkdv-freq}
\begin{equivenum}
\item
For any $n\in\Z$, the sum $-12\sum_{k\in\Z} k\Om_{nk}^{(2)}$ converges absolutely and locally uniformly on $\Ws^{p}$, $2 \le p < \infty$, to an analytic function which is an analytic extension of $\om_{n}^{(4)\star}$ given by~\eqref{omn-4-star}.

%
\item
For any $2 < p < \infty$, $\om^{(4)\star} = (\om_{n}^{(4)\star})_{n\in\Z} \colon \Ws^{p}\to \ell_{\C}^{-1,p/2}$ is a real analytic map which satisfies
\[
  \om_{n}^{(4)\star} + (12n\pi)I_{n} = n(\ell_{n}^{p/3} + \ell_{n}^{1+})
\]
locally uniformly on $\Ws^{p}$.
For $p=2$, $\om^{(4)\star}\colon \Ws^{2} \to \ell_{\C}^{-1,r}$ is real-analytic for any $r>1$.~\fish
\end{equivenum}
\end{thm}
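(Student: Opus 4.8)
The plan is to take the right-hand side of~\eqref{om-star-Om-nk-4} as the \emph{definition} of the extension of $\om_{n}^{(4)\star}$ to $\Ws^{p}$ and then to read off every assertion from the decay estimates of Lemma~\ref{decay-Omnk2}. For part~(i), fix $n\in\Z$ and $2\le p<\infty$. By Lemma~\ref{decay-Omnk2} there is a sequence $\al_{k}^{n}$, bounded in $\ell^{q}$ for every $q>1$ uniformly in $n$ and locally uniformly on $\Ws^{p}$, with $\Om_{nk}^{(2)}=\gm_{k}^{3}\al_{k}^{n}/(n-k)$ for $k\neq n$ and $\Om_{nn}^{(2)}=\frac{\gm_{n}^{2}}{4}(\pi+\ell_{n}^{p/2}+\ell_{n}^{1+})$. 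Using $k/(n-k)=-1+n/(n-k)$ one gets, for $k\neq n$,
\[
  k\,\Om_{nk}^{(2)}=-\gm_{k}^{3}\al_{k}^{n}+\frac{n\,\gm_{k}^{3}\al_{k}^{n}}{n-k}.
\]
Since $(\gm_{k}^{3})_{k}\in\ell^{p/3}$ and $\al^{n}\in\ell^{\infty}$ the first term is absolutely summable in $k$, and since $((n-k)^{-1})_{k\neq n}\in\ell^{q}$ for every $q>1$, Hölder's inequality makes the second term absolutely summable as well, with bounds locally uniform on $\Ws^{p}$. As each $\Om_{nk}^{(2)}$ is analytic on $\Ws^{p}$ by Lemma~\ref{Om-nk-prop}(ii), the locally uniform limit $-12\sum_{k}k\,\Om_{nk}^{(2)}$ is analytic on $\Ws^{p}$. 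By Lemma~\ref{om-Om-4} it coincides with the a priori frequency $\om_{n}^{(4)\star}$ on every finite gap potential of real type; as these are dense in $H_{r}^{3/2}\subset\FL_{r}^{p}\subset\Ws^{p}$ and the a priori $\om_{n}^{(4)\star}$ is continuous there, the two agree on all of $H_{r}^{3/2}$, so $-12\sum_{k}k\,\Om_{nk}^{(2)}$ is the claimed analytic extension; it does not depend on $p$, so the extensions for different $p$ agree on overlaps.

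For the asymptotics in part~(ii) split $\om_{n}^{(4)\star}=-12n\,\Om_{nn}^{(2)}-12\sum_{k\neq n}k\,\Om_{nk}^{(2)}$. For the diagonal term Lemma~\ref{decay-Omnk2} gives $-12n\,\Om_{nn}^{(2)}=-3\pi n\gm_{n}^{2}+n\gm_{n}^{2}(\ell_{n}^{p/2}+\ell_{n}^{1+})$; on the other hand, from $I_{n}=-\frac{1}{\pi}\int_{\Gm_{n}}F_{n}(\lm)\,\dlm$ (Lemma~\ref{F-prop}(v), using $F=F_{n}-\ii n\pi$ and $\int_{\Gm_{n}}\dlm=0$), the refinement $F_{n}=\ii\vs_{n}+\gm_{n}(\ell_{n}^{p/2}+\ell_{n}^{1+})$ on $G_{n}^{\pm}$ of Lemma~\ref{Fn-refined}, and the value $\int_{\Gm_{n}}\vs_{n}(\lm)\,\dlm=\ii\pi\gm_{n}^{2}/4$ from the proof of Lemma~\ref{decay-Omnk2}, one obtains $I_{n}=\gm_{n}^{2}/4+\gm_{n}^{2}(\ell_{n}^{p/2}+\ell_{n}^{1+})$. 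Hence $-12n\,\Om_{nn}^{(2)}+12n\pi I_{n}=n\gm_{n}^{2}(\ell_{n}^{p/2}+\ell_{n}^{1+})$, which, since $(\gm_{n}^{2})_{n}\in\ell^{p/2}$, is contained in $n(\ell_{n}^{p/3}+\ell_{n}^{1+})$. For the off-diagonal sum, $k/(n-k)=-1+n/(n-k)$ again gives
\[
  -12\sum_{k\neq n}k\,\Om_{nk}^{(2)}=12\sum_{k\neq n}\gm_{k}^{3}\al_{k}^{n}-12n\sum_{k\neq n}\frac{\gm_{k}^{3}\al_{k}^{n}}{n-k};
\]
the first sum is bounded uniformly in $n$ (Hölder, as above), hence equals $n\,\ell_{n}^{1+}$, while the second is a discrete convolution of $(\gm_{k}^{3}\al_{k}^{n})_{k}\in\ell^{p/3}$ against the kernel $(n-k)^{-1}$ and therefore, by Lemma~\ref{A-trans} (boundedness on $\ell^{r}$ for $1<r<\infty$ together with the $\ell^{1}\to\ell^{1+}$ endpoint mapping), lies in $\ell^{p/3}+\ell^{1+}$, so this term is $n(\ell_{n}^{p/3}+\ell_{n}^{1+})$. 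Adding the three contributions yields $\om_{n}^{(4)\star}+12n\pi I_{n}=n(\ell_{n}^{p/3}+\ell_{n}^{1+})$, locally uniformly on $\Ws^{p}$.

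For the mapping property and analyticity, the asymptotics together with $(I_{n})_{n}\in\ell^{p/2}$ (since $I_{n}=\gm_{n}^{2}/4+\dotsb$ and $(\gm_{n}^{2})_{n}\in\ell^{p/2}$) and $\ell^{p/3},\ell^{1+}\subset\ell^{p/2}$ show that $(\langle n\rangle^{-1}\om_{n}^{(4)\star})_{n}\in\ell^{p/2}$, i.e.\ $\om^{(4)\star}(\ph)\in\ell_{\C}^{-1,p/2}$, with a bound locally uniform on $\Ws^{p}$; hence $\om^{(4)\star}\colon\Ws^{p}\to\ell_{\C}^{-1,p/2}$ is locally bounded. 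Since each coordinate is analytic by part~(i), the standard fact that a locally bounded map into $\ell_{\C}^{-1,p/2}$ with analytic coordinate functions is analytic applies, and restricting to $\FL_{r}^{p}$ gives a real analytic map. For $p=2$ one runs the same argument with the sharper estimate $\sg_{k}^{n}-\lm_{k}^{\ld}=\gm_{k}\ell_{k}^{1}[n]$ of the remark following Proposition~\ref{prop:sg-lm}, which upgrades Lemma~\ref{decay-Omnk2} to $\Om_{nk}^{(2)}=\gm_{k}^{3}\ell_{k}^{1}[n]/(n-k)$ and $\Om_{nn}^{(2)}=\frac{\gm_{n}^{2}}{4}(\pi+\ell_{n}^{1})$; the only obstruction to reaching the endpoint $\ell^{-1,1}$ is the unboundedness on $\ell^{1}$ of convolution against $(n-k)^{-1}$, so one obtains $\om^{(4)\star}\colon\Ws^{2}\to\ell_{\C}^{-1,r}$ analytic for every $r>1$.

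I expect the off-diagonal sum to be the main obstacle: everything hinges on the decay $\Om_{nk}^{(2)}=O\bigl(\gm_{k}^{3}/(n-k)\bigr)$ of Lemma~\ref{decay-Omnk2}, which itself rests on the refined bound $\sg_{k}^{n}-\lm_{k}^{\ld}=\gm_{k}\ell_{k}^{1+}[n]$ of Proposition~\ref{prop:sg-lm}, one power of $\gm_{k}$ sharper than the naive estimate $\gm_{k}^{2}\ell_{k}^{p}$; with only the naive bound the convolution would not land in $n(\ell^{p/3}+\ell^{1+})$ and the target space $\ell_{\C}^{-1,p/2}$ would be lost. The second, milder point is recognizing that $I_{n}=\gm_{n}^{2}/4+$(lower order), so that the leading $\gm_{n}^{2}$-term of $\Om_{nn}^{(2)}$ is cancelled exactly against $-12n\pi I_{n}$.
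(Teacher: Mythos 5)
Your argument is correct and follows essentially the same route as the paper: the splitting $k\Om_{nk}^{(2)}=(k-n)\Om_{nk}^{(2)}+n\Om_{nk}^{(2)}$, the decay estimates of Lemma~\ref{decay-Omnk2} combined with the discrete Hilbert transform for the off-diagonal sum, the cancellation of the leading $\gm_n^2$-term of $\Om_{nn}^{(2)}$ against $12n\pi I_n$, and Lemma~\ref{om-Om-4} plus uniqueness of analytic continuation for part~(i). The only (harmless) deviation is that you derive $4I_n/\gm_n^2=1+\ell_n^{p/2}+\ell_n^{1+}$ directly from Lemma~\ref{F-prop}(v) and Lemma~\ref{Fn-refined}, where the paper cites \cite[Theorem~11.2]{Molnar:2016uq}.
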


\begin{rem}
Let $\Vs^{p/2} = \setdef{I = (I_{n}(\ph))_{n\in\Z}}{\ph\in \Wp^{p}}$ for $2 \le p < \infty$. Then $\Vs^{p/2}$ is an open and dense subset of $\ell^{p/2}$ which contains $\ell^{1}$.
Arguing as in the proof of \cite[Theorem~20.3]{Grebert:2014iq}, one sees that for any $2 \le p < \infty$, the frequency $\om_{n}^{(4)\star}$ is a real analytic function of the actions on $\Vs^{p/2}$.\map
\end{rem}

\begin{proof}
(i): In view of Lemma~\ref{decay-Omnk2} for $k\neq n$
\begin{equation}
  \label{Om-nk-2-est}
    k\Om_{nk}^{(2)}
  = (k-n)\Om_{nk}^{(2)} + n\Om_{nk}^{(2)}
  = \gm_{k}^{3}\ell_{k}^{1+}[n] + \frac{n}{n-k}\gm_{k}^{3}\ell_{k}^{1+}[n]
\end{equation}
locally uniformly on $\Ws^{p}$. In particular, the sum $-12\sum_{k\in\Z} k\Om_{nk}^{(2)}$ is absolutely and locally uniformly convergent to an analytic function on $\Ws^{p}$ for any $2 \le p < \infty$. Moreover, the identity $\om_{n}^{(4)\star} = -12\sum_{k\in\Z} k\Om_{nk}^{(2)}$, $n\in\Z$, holds for any real valued finite gap potential by Lemma~\ref{om-Om-4}. Consequently, $-12\sum_{k\in\Z} k\Om_{nk}^{(2)}$ is the unique analytic extension of $\om_{n}^{(4)\star}$ from the set of finite gap potentials to $\Ws^{p}$, $2 \le p < \infty$.

(ii):
By~\eqref{Om-nk-2-est} and Lemma~\ref{H-trans} we have
\[
  \sum_{k\neq n} k\Om_{nk}^{(2)}
  = \sum_{k\neq n} \gm_{k}^{3}\ell_{k}^{1+}[n] + n\sum_{k\neq n} \frac{\gm_{k}^{3}\ell_{k}^{1+}[n]}{n-k}
  = \ell_{n}^{\infty} + n(\ell_{n}^{p/3}+\ell_{n}^{1+}).
\]
Moreover, $n\Om_{nn}^{(2)} = n\frac{\gm_{n}^{2}}{4}(\pi + \ell_{n}^{p/2} + \ell_{n}^{1+})$ by Lemma~\ref{decay-Omnk2}, while
\[
  \frac{4I_{n}}{\gm_{n}^{2}} = 1 + \ell_{n}^{p/2} + \ell_{n}^{1+},
\]
by~\cite[Theorem~11.2]{Molnar:2016uq} and hence
\[
  \frac{\gm_{n}^{2}}{4} = I_{n} + \ell_{n}^{p/4} + \ell_{n}^{1},
\]
so that
\[
  \om_{n}^{(4)\star} = -12n\pi I_{n} + n(\ell_{n}^{p/3} + \ell_{n}^{1+})
\]
locally uniformly on $\Ws^{p}$. In particular, it follows that
\[
  \om_{n}^{(4)\star} = n (\ell_{n}^{p/2}+\ell_{n}^{1+}).\qed
\]
\end{proof}

\begin{proof}[Proof of Theorem~\ref{thm:nls-freq}.]
The statement of the theorem is a direct consequence of Theorem~\ref{mkdv-freq}.\qed
\end{proof}

\section{Symmetries of the mKdV frequencies}

The phase space $H^{3/2}(\T,\R)$ of the mKdV and \rmKdV equation corresponds to the subspace $\Ec_{r}^{2}\cap H_{r}^{3/2}$ of $H_{r}^{3/2}$ when they are viewed as equations in the NLS hierarchy. Here, for $2 \le p < \infty$
\[
  \Ec_{r}^{p} = \setd{\ph = (u,u)\in \FL_{r}^{p}}.
\]
As a first step towards the proof of Theorem~\ref{thm:wp-defocusing-mkdv}, we describe this subspace in Birkhoff coordinates. It was shown in \cite{Grebert:2002wb} that the space $\Ec_{r}^{p}$ in the case $p=2$ is characterized by the elements $\ph\in \FL_{r}^{2}$ satisfying
\[
  I_{-k}(\ph) = I_{k}(\ph)\forall k\in\Z,\quad \th_{-k}(\ph) = -\th_{k}(\ph)\forall k\in\Z\text{ with }I_{k}(\ph)\neq 0.
\]
The same characterization holds for $2 < p < \infty$, since $\Ec_{r}^{2}$ is dense in $\Ec_{r}^{p}$ and the Birkhoff map extends real analytically to $\Wp^{p}$. As a consequence, $z_{n}^{-} = \sqrt{I_{n}}\e^{-\ii \th_{n}}$ and $z_{n}^{+} = \sqrt{I_{n}}\e^{\ii \th_{n}}$ defined on $\FL_{r}^{p}\setminus Z_{n}$ satisfy for any $n\in\Z$,
\begin{equation}
  \label{zn-E-symmetry}
  z_{-n}^{-}(\ph) = z_{n}^{+}(\ph),\qquad
  z_{n}^{+}(\ph) = \ob{z_{n}^{-}(\ph)},\qquad
  \forall \ph\in \Ec_{r}^{p}.
\end{equation}
In particular, the Hamiltonian
\[
  \Hm_{2} = \sum_{m\in\Z} (2m\pi)I_{m}
\]
vanishes when restricted to $\Ec_{r}^{2} \cap H_{r}^{1/2}$.

Theorem~\ref{mkdv-freq} then can be used to prove the following

\begin{cor}
\label{cor:mkdv-freq}

\begin{equivenum}
\item
On $\Ec_{r}^{2}$, the frequencies $\om_{n}^{(4)}$, $n\in\Z$, are well defined and real analytic. For any $n\in\Z$, the restriction $\om_{n}^{(4)}\big|_{\Ec_{r}^{2}}$ is the $n$th mKdV frequency which we denote by $\om_{n}$.

\item On $\Ec_{r}^{2}$,
\[
  \om_{-n} = -\om_{n},\quad
  \om_{-n}^{(4)\star} = -\om_{n}^{(4)\star},\quad n\in\Z,
\]
and hence on $\Ec_{r}^{p}$, $2 \le p < \infty$,
\[
  \om_{-n}^{(4)\star} = -\om_{n}^{(4)\star},\qquad n\in\Z.\fish
\]

\end{equivenum}
\end{cor}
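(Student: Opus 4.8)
The plan is to treat part~(i) as bookkeeping and to prove part~(ii) by reducing to a dense set of finite gap potentials, where a reflection symmetry of the Zakharov--Shabat operator on $\Ec_r$ makes the identity transparent. For part~(i): by Theorem~\ref{mkdv-freq} the map $\om^{(4)\star}$ is real analytic on $\Ws^2\supset\FL_r^2=H_r^0\supset\Ec_r^2$, and $\Hm_1=\sum_n I_n=\n{\phm}_{L^2}^2$ is real analytic on $\FL_r^2$. Since $\Hm_2$ vanishes on $\Ec_r^2\cap H_r^{1/2}$ (recorded above), the normalization~\eqref{omn-4-star} shows that $\om_n^{(4)}\defl(2n\pi)^3+12n\pi\,\Hm_1+\om_n^{(4)\star}$ is real analytic on $\Ec_r^2$ and agrees, on $\Ec_r^2\cap H_r^{3/2}$, with the NLS frequency $\partial_{I_n}\Hm_4$. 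As $\Ec_r^s$ is invariant under the mKdV flow for $s\ge 3/2$ and the Birkhoff equations of motion read $\partial_t z_n^\pm=\pm\ii\,\om_n^{(4)}z_n^\pm$, these restrictions are by construction the mKdV frequencies $\om_n$.

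For part~(ii) I would first observe that it suffices to prove $\om_{-n}^{(4)\star}=-\om_n^{(4)\star}$ on $\Ec_r^2$: the definition in~(i) then gives $\om_{-n}=(-2n\pi)^3+12(-n)\pi\,\Hm_1+\om_{-n}^{(4)\star}=-\om_n$ at once, and the identity on $\Ec_r^p$, $2<p<\infty$, follows since $\om^{(4)\star}$ is real analytic, hence continuous, on $\Ws^p$ and $\Ec_r^2$ is dense in $\Ec_r^p$. Moreover, since $\om^{(4)\star}$ is continuous on $\Ws^2$ and the finite gap potentials of real type contained in $\Ec_r^2$ are dense in $\Ec_r^2$ (truncate the action--angle data of Theorem~\ref{bhf} using the characterization of $\Ec_r^2$ via $I_{-k}=I_k$, $\th_{-k}=-\th_k$), it is enough to verify the identity at one such potential $\ph=(u,u)$. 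The structural input is $J\,L(u,u)\,J=-L(u,u)$, where $J$ is the second Pauli matrix --- here the equality $\php=\phm$ defining $\Ec_r$ is exactly what makes the off-diagonal part change sign. Conjugating the fundamental solution by $J$ then yields $\Dl(-\lm,\ph)=\Dl(\lm,\ph)$, from which one reads off $\lm_{-k}^{\pm}=-\lm_k^{\mp}$, $\tau_{-k}=-\tau_k$, $\gm_{-k}=\gm_k$, $\lm_{-k}^{\ld}=-\lm_k^{\ld}$, and --- fixing the branch by evaluating on $(\lm_0^+,\lm_1^-)$, whose reflection is $(\lm_{-1}^+,\lm_0^-)$ --- $\vs_k(-\lm)=-\vs_{-k}(\lm)$ and $\sqrt[c]{\Dl^2(-\lm)-4}=-\sqrt[c]{\Dl^2(\lm)-4}$. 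Feeding these into the uniqueness characterizations of $F_k$ (as a primitive, with a base point satisfying $-\lm_{-k}^+=\lm_k^-$) and of $\psi_n$ (the form~\eqref{psi-form} plus the normalization~\eqref{psi-int-eqn}) one obtains $F_{-k}(-\lm)=-F_k(\lm)$ and $\psi_{-n}(\lm)=-\psi_n(-\lm)$.

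Finally, substituting $\lm=-\mu$ in the contour integral defining $\Om_{-n,-k}^{(2)}$ --- noting that $\Gm_{-k}$ reflects onto a negatively oriented circuit around $G_k$ and that the sign contributions of $F_{-k}^2$, $\psi_{-n}$, $1/\sqrt[c]{\Dl^2-4}$ and $\dlm$ cancel --- yields $\Om_{-n,-k}^{(2)}(\ph)=\Om_{nk}^{(2)}(\ph)$ for all $k\in\Z$, and then Lemma~\ref{om-Om-4} gives
\begin{align*}
  \om_{-n}^{(4)\star}(\ph)
  &= -12\sum_{k\in\Z} k\,\Om_{-n,k}^{(2)}(\ph)
   = 12\sum_{j\in\Z} j\,\Om_{-n,-j}^{(2)}(\ph)\\
  &= 12\sum_{j\in\Z} j\,\Om_{nj}^{(2)}(\ph)
   = -\om_n^{(4)\star}(\ph),
\end{align*}
which finishes the finite gap case and, by the reductions above, the proof. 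A shorter alternative, if one is willing to invoke local wellposedness of mKdV in $H^{3/2}$, is purely dynamical: on the invariant subspace $\Ec_r^{3/2}$, differentiating the relation $z_{-n}^-=z_n^+$ from~\eqref{zn-E-symmetry} along a solution and inserting $\partial_t z_{\pm m}^\pm=\pm\ii\,\om_{\pm m}^{(4)}z_{\pm m}^\pm$ gives $\om_{-n}^{(4)}=-\om_n^{(4)}$ wherever $I_n\neq 0$, hence on all of $\Ec_r^{3/2}$ by real analyticity, and the claim for $\om^{(4)\star}$ follows via~\eqref{omn-4-star}. The main obstacle I anticipate is the sign and orientation bookkeeping in passing to the reflected spectral data: pinning down $\sqrt[c]{\Dl^2(-\lm)-4}=-\sqrt[c]{\Dl^2(\lm)-4}$ and $\psi_{-n}(\lm)=-\psi_n(-\lm)$ with the correct global signs, and verifying that the several sign contributions in the $\Om_{-n,-k}^{(2)}$ integral combine to $+\Om_{nk}^{(2)}$ rather than its negative; everything else is routine given the results already in place.
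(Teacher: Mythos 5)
Your proposal is correct, but your primary argument for part~(ii) takes a genuinely different route from the paper's. The paper argues dynamically: since the $\Hm_{4}$-flow leaves $\Ec_{r}^{2}\cap H_{r}^{3/2}$ invariant and $\th_{-n}=-\th_{n}$ there, one gets $\om_{-n}^{(4)}=\frac{d}{dt}\th_{-n}\circ\phi_{t}\big|_{t=0}=-\om_{n}^{(4)}$ off $Z_{n}$, extended by continuity and then to $\Ec_{r}^{p}$ by density --- this is exactly the ``shorter alternative'' you sketch at the end, so you have in effect reproduced the paper's proof as a remark. Your main argument instead works at the spectral level: conjugating $L(u,u)$ by the second Pauli matrix gives $\Dl(-\lm)=\Dl(\lm)$ on $\Ec_{r}$, hence $\lm_{-k}^{\pm}=-\lm_{k}^{\mp}$, $F_{-k}(-\lm)=-F_{k}(\lm)$, $\psi_{-n}(\lm)=-\psi_{n}(-\lm)$ and $\sqrt[c]{\Dl^{2}(-\lm)-4}=-\sqrt[c]{\Dl^{2}(\lm)-4}$, so that $\Om_{-n,-k}^{(2)}=\Om_{nk}^{(2)}$ and the oddness of $\om_{n}^{(4)\star}$ drops out of the moment formula of Lemma~\ref{om-Om-4}. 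I checked the sign bookkeeping you flagged as the main risk and it comes out right: the orientation reversal of $\Gm_{-k}$ under $\lm\mapsto-\lm$, the sign of $\dlm$, the sign of the canonical root, and the sign of $\psi_{-n}$ cancel in pairs, and the reindexing $k\mapsto-k$ in $-12\sum_{k}k\,\Om_{-n,k}^{(2)}$ produces the final minus sign. Your route is heavier --- it needs the reflection symmetry of the periodic spectrum (essentially the content of~\cite{Grebert:2002wb}), a restriction to finite gap potentials where the fundamental solution and Lemma~\ref{om-Om-4} are available, and a closing density argument --- but it buys the finer statement $\Om_{-n,-k}^{(2)}=\Om_{nk}^{(2)}$ for the individual moments and avoids invoking the wellposedness of the $\Hm_{4}$-flow on $H_{r}^{3/2}$. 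Part~(i) is handled essentially as in the paper.
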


\begin{proof}
(i) At any finite gap potential of real type we have by~\eqref{om-star-Om-nk-4}
\[
  \om_{n}^{(4)} = (2n\pi)^{3} + 6\Hm_{2} + 12n\pi\Hm_{1} + \om_{n}^{(4)\star},
\]
and the right hand side is real analytic on $H_{r}^{1/2}$. Moreover, on $\Ec_{r}^{2}\cap H_{r}^{1/2}$ the Hamiltonian $\Hm_{2}$ vanishes identically. Since $\Hm_{1}$ is well defined and real analytic on $\FL_{r}^{2}$ (cf.~\cite{Grebert:2014iq}) and by Theorem~\ref{mkdv-freq}, $\om_{n}^{(4)\star}$, $n\in\Z$, are also well defined and real analytic on $\FL_{r}^{2}$, $\om_{n}^{(4)}$ is well defined on $\Ec_{r}^{2}$ and real analytic there.

(ii)
Let $\phi_{t}$ be a local flow for the vector field $X_{\Hm_{4}}$. The mKdV flow leaves the space $\Ec_{r}^{2}\cap H_{r}^{3/2}$ invariant, that is, $\phi_{0}\in \Ec_{r}^{2}\cap H_{r}^{3/2}$ implies that $\phi_{t}\in \Ec_{r}^{2}\cap H_{r}^{3/2}$ for all $t\in\R$. Consequently, we compute on $\Ec_{r}^{2}\cap H_{c}^{3/2}\setminus Z_{n}$
\[
  \om_{-n}^{(4)} = \ddt\th_{-n}\circ\phi_{t}\bigg|_{t=0} =
  -\ddt\th_{n}\circ\phi_{t}\bigg|_{t=0} = -\om_{n}^{(4)},\qquad n\in\Z,
\]
and the identity extends to all of $\Ec_{r}^{2}$ by continuity.
Finally, on $\Ec_{r}^{2}$
\[
  \om_{n}^{(4)\star} = \om_{n}^{(4)} - (2n\pi)^{3} - 12n\pi\Hm_{1},
\]
and the right hand side is an odd function of $n$. The claimed identity on $\Ec_{r}^{p}$ with $2\le p < \infty$ then follows by a density argument.\qed
\end{proof}

\section{Proof of Theorem~\ref{thm:wp-defocusing-mkdv}}
\label{s:wp}

In a first step we establish the results corresponding to the ones of Theorem~\ref{thm:wp-defocusing-mkdv} in Birkhoff coordinates and to this end introduce some more notation. By~\eqref{zn-E-symmetry}, the Birkhoff coordinates have the following symmetries on $\Ec_{r}^{p}$ 
\[
  z_{-n}^{-} = z_{n}^{+}\quad\text{and}\quad z_{n}^{-} = \ob{z_{n}^{+}}\quad \forall n\in\Z.
\]
For any $2 \le p < \infty$ we therefore define the real subspaces
\[
  e_{r}^{p} \defl \setdef{(z_{-},z_{+})\in\ell_{r}^{p}}{z_{-n}^{-} = z_{n}^{+}\forall n\in\Z},
\]
and note that $\Phi_{p}(\Ec_{r}^{p})$ is open and dense in $e_{r}^{p}$ whereas for $p=2$, $\Phi_{2}(\Ec_{r}^{2}) = e_{r}^{2}$. On $\Phi_{2}(\Ec_{r}^{2})$, the mKdV equation takes the form
\begin{equation}
  \label{mkdv-sys-bhf}
  \partial_{t}z_{n}^{-} = -\ii \om_{n}z_{n}^{-},\qquad
  \partial_{t}z_{n}^{+} = \ii \om_{n}z_{n}^{+},\qquad \forall n\in\Z,
\end{equation}
where for any $n\in\Z$ and $\ph\in \Ec_{r}^{2}$
\[
  \om_{n}(\ph) = (2n\pi)^{3} + 12n\pi \Hm_{1}(\ph) + \om_{n}^{(4)\star}(\ph).
\]
Similarly, the \rmKdV equation reads
\begin{equation}
  \label{rmkdv-sys-bhf}
  \partial_{t}z_{n}^{-} = -\ii \om_{n}^{\#}z_{n}^{-},\qquad
  \partial_{t}z_{n}^{+} = \ii \om_{n}^{\#}z_{n}^{+},\qquad \forall n\in\Z,
\end{equation}
where for any $n\in\Z$ and $\ph\in \Ec_{r}^{2}$
\[
  \om_{n}^{\#}(\ph) = (2n\pi)^{3} + \om_{n}^{(4)\star}(\ph).
\]
By Theorem~\ref{mkdv-freq}, $\om_{n}$, $\om_{n}^{\#}$, $n\in\Z$, are well defined on $\Ec_{r}^{2}$ and real analytic there. The corresponding solution maps are denoted by $\Sc_{\Phi}(t,\ph)$ and $\Sc_{\Phi}^{\#}(t,\ph)$, respectively. In more detail, given any $\ph\in \Ec_{r}^{2}$ let $z = (z_{-},z_{+}) = \Phi_{2}(\ph)$ and $\om_{n} = \om_{n}(\ph)$, $\om_{n}^{\#} = \om_{n}^{\#}(\ph)$. Then for any $t\in\R$,
\[
  \Sc_{\Phi}(t,\ph) = (\e^{-\ii \om_{n}t} z_{n}^{-}, \e^{\ii \om_{n}t}z_{n}^{+})_{n\in\Z},\quad
  \Sc_{\Phi}^{\#}(t,\ph) = (\e^{-\ii \om_{n}^{\#}t} z_{n}^{-}, \e^{\ii \om_{n}^{\#}t}z_{n}^{+})_{n\in\Z}.
\]
By Corollary~\ref{cor:mkdv-freq}, on $\Ec_{r}^{2}$ one has for any $n\in\Z$ that $\om_{-n} = -\om_{n}$ hence by the definition of $\om_{n}^{\#}$, one also has
\begin{equation}
  \label{om-n-sharp-symmetry}
  \om_{-n}^{\#} = -\om_{n}^{\#}
\end{equation}
It implies that $\Sc_{\Phi}(\cdot,\ph)$ and $\Sc_{\Phi}^{\#}(\cdot,\ph)$ both leave the subspace $e_{r}^{2}$ invariant. By Theorem~\ref{mkdv-freq}, the frequencies $\om_{n}^{\#}$, $n\in\Z$, real analytically extend to $\Ec_{r}^{p}$ for any $2 \le p < \infty$. It allows to extend the solution operator $\Sc_{\Phi}^{\#}(\cdot,\ph)$ as well.

\begin{thm}
\label{thm:wp-kdv}
\begin{equivenum}
\item
For any $\ph\in\Ec_{r}^{p}$ with $2\le p < \infty$, the curve
\[
  \R\to \ell_{r}^{p},\qquad t \mapsto \Sc_{\Phi}^{\#}(t,\ph) = (\e^{-\ii \om_{n}^{\#}t} z_{n}^{-}, \e^{\ii \om_{n}^{\#}t}z_{n}^{+})_{n\in\Z}
\]
is continuous and takes values in $e_{r}^{p}$.

\item
For any $2\le p < \infty$ and $T > 0$,
\[
  \Sc_{\Phi}^{\#}\colon \Ec_{r}^{p} \to C([-T,T],e_{r}^{p})
\]
is continuous.

\item
For any $2 < p < \infty$, $n\in\Z$, and $t > 0$, the coordinate functions $\ph\mapsto \e^{-\ii \om_{n}t} z_{n}^{-}$ and $\ph\mapsto \e^{\ii \om_{n}t} z_{n}^{+}$ cannot be extended continuously to elements $\ph \in \Ec_{r}^{p}\setminus \Ec_{r}^{2}$ with $z_{n}^{+}\neq 0$ and $z_{n}^{-}\neq 0$, respectively.~\fish
\end{equivenum}
\end{thm}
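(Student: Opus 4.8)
The plan is to get (i) and (ii) by soft arguments and to put the work into (iii), the ill-posedness assertion.

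\emph{Items (i) and (ii).} Since $\ph\in\Ec_r^p\subset\FL_r^p\subset\Ws^p$ and $\om_n^\#=(2n\pi)^3+\om_n^{(4)\star}$ is real analytic on $\Ws^p$ by Theorem~\ref{mkdv-freq}, every $\om_n^\#(\ph)$ is real; hence $\abs{\e^{\pm\ii\om_n^\#(\ph)t}z_n^\pm}=\abs{z_n^\pm}$, the flow preserves the $\ell^p$-norm, and the curve $t\mapsto\Sc_\Phi^\#(t,\ph)$ stays in $\ell_r^p$. It is continuous into $\ell^p$ by dominated convergence: each coordinate is continuous in $t$, and $(\abs{z_n^-(\ph)}^p+\abs{z_n^+(\ph)}^p)_{n\in\Z}\in\ell^1$ dominates. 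The subspace $e_r^p$ is preserved because $z_n^+=\ob{z_n^-}$ survives ($\om_n^\#(\ph)\in\R$) and $z_{-n}^-=z_n^+$ survives ($\om_{-n}^\#=-\om_n^\#$ on $\Ec_r^p$, by \eqref{om-n-sharp-symmetry}). For (ii), given $\ph_k\to\ph$ in $\Ec_r^p$, I would write
\[
  \e^{-\ii\om_n^\#(\ph_k)t}z_n^-(\ph_k)-\e^{-\ii\om_n^\#(\ph)t}z_n^-(\ph)
  =\e^{-\ii\om_n^\#(\ph_k)t}\p*{z_n^-(\ph_k)-z_n^-(\ph)}+\p*{\e^{-\ii\om_n^\#(\ph_k)t}-\e^{-\ii\om_n^\#(\ph)t}}z_n^-(\ph).
\]
The first term contributes $\le\n{\Phi_p(\ph_k)-\Phi_p(\ph)}_{\ell^p}$ to the $\ell^p$-norm, uniformly in $t$, and tends to $0$ since $\Phi_p$ is continuous; for the second, $\abs{\e^{-\ii\om_n^\#(\ph_k)t}-\e^{-\ii\om_n^\#(\ph)t}}\le\min(2,T\abs{\om_n^\#(\ph_k)-\om_n^\#(\ph)})$ on $[-T,T]$, so dominated convergence against $(2^p\abs{z_n^-(\ph)}^p)_{n\in\Z}\in\ell^1$, with each $\om_n^\#$ continuous on $\Ws^p$, yields a bound tending to $0$ uniformly in $t$. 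The $z^+$-components are conjugates, so $\Sc_\Phi^\#\colon\Ec_r^p\to C([-T,T],e_r^p)$ is continuous.

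\emph{Item (iii), set-up.} On $\Ec_r^p$ one has $\om_n=\om_n^\#+12n\pi\Hm_1$ with $\Hm_1=\sum_mI_m$, and I claim $\Hm_1=+\infty$ throughout $\Ec_r^p\setminus\Ec_r^2$: if $\ph\in\Ec_r^p$ with $(I_m(\ph))_{m\in\Z}\in\ell^1$, then, since $I_m=\abs{z_m^-}^2$ and $\Phi_p(\ph)\in e_r^p$, the point $\Phi_p(\ph)$ lies in $e_r^p\cap\ell_c^2=e_r^2=\Phi_2(\Ec_r^2)$, whence $\ph\in\Ec_r^2$ by Theorem~\ref{bhf}. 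Fix $n\neq0$ and $t>0$ (for $n=0$ the coordinate evolves by $\e^{-\ii\om_0^\# t}$, so there is nothing to show); since complex conjugation interchanges the two coordinate maps on $\Ec_r^p$, it suffices to treat $g\colon\ph\mapsto\e^{-\ii\om_nt}z_n^-$. I would argue by contradiction: assume $g$ has a continuous $\C$-valued extension $\tilde g$ to some $\ph_\infty\in\Ec_r^p\setminus\Ec_r^2$ with $z_n^-(\ph_\infty)\neq0$ (equivalently $z_n^+(\ph_\infty)\ne0$), and set $w=(z_m^-(\ph_\infty))_{m\in\Z}$, a Hermitian sequence ($w_{-m}=\ob{w_m}$) in $\ell^p\setminus\ell^2$.

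\emph{Item (iii), construction and contradiction.} I would build two sequences in $\Ec_r^2$ converging to $\ph_\infty$ with a prescribed, nonvanishing difference of $\Hm_1$. Let $w^{(k)}$ be the truncation of $w$ to $\abs m\le M_k$ ($M_k\to\infty$): a finite Hermitian sequence, so $(w^{(k)},\ob{w^{(k)}})\in e_r^2=\Phi_2(\Ec_r^2)$ and $w^{(k)}\to w$ in $\ell^p$. Let $\hat w^{(k)}$ add to $w^{(k)}$ the real value $\sqrt{d/(2N_k)}$ at $N_k$ symmetric pairs of frequencies $\pm m_j$ with $m_j>M_k$, where $N_k\to\infty$ and $d=1/(12\abs nt)>0$; this is again finite Hermitian, and the bump has $\ell^p$-norm $O(N_k^{1/p-1/2})\to0$ --- exactly where $p>2$ is used --- so $\hat w^{(k)}\to w$ in $\ell^p$ too. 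With $\ph_k'=\Phi_2^{-1}(w^{(k)},\ob{w^{(k)}})$ and $\ph_k''=\Phi_2^{-1}(\hat w^{(k)},\ob{\hat w^{(k)}})$, both lie in $\Ec_r^2$ and converge to $\ph_\infty$ in $\FL_r^p$ by continuity of $\Phi_p^{-1}$ near $\Phi_p(\ph_\infty)$, while $\Hm_1(\ph_k'')-\Hm_1(\ph_k')=d$ for every $k$. Then, if $\tilde g$ exists, $g(\ph_k')\to\tilde g(\ph_\infty)$ and $g(\ph_k'')\to\tilde g(\ph_\infty)$; writing $g(\ph_k')=\e^{-12\ii n\pi\Hm_1(\ph_k')t}c_k'$ with $c_k'=\e^{-\ii\om_n^\#(\ph_k')t}z_n^-(\ph_k')\to c_\infty:=\e^{-\ii\om_n^\#(\ph_\infty)t}z_n^-(\ph_\infty)\neq0$ (continuity of $\om_n^\#$ and $z_n^-$ on $\Ws^p$), and similarly for $\ph_k''$, the unimodular factors $\e^{-12\ii n\pi\Hm_1(\ph_k')t}$ and $\e^{-12\ii n\pi\Hm_1(\ph_k'')t}$ are forced to converge to the same value $\tilde g(\ph_\infty)/c_\infty$. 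Hence their ratio $\e^{-12\ii n\pi d\,t}$ --- which equals $-1$, as its exponent is $\pm\pi$ --- would tend to $1$: a contradiction. The $z^+$-case follows by conjugation. The only genuine work is the construction of $\ph_k',\ph_k''$, and I expect the care to lie in the Birkhoff-coordinate bookkeeping --- that finite Hermitian sequences lie in $\Phi_2(\Ec_r^2)$, that $\Phi_p^{-1}$ is continuous on its open image, that $I_m=\abs{z_m^-}^2$ --- together with the arithmetic by which $p>2$ enters via the bump.
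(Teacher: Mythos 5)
Your proposal is correct, but it is self-contained where the paper is not: the paper disposes of (i) and (ii) by citing \cite[Theorem~E.1]{Kappeler:2016uj} (plus the symmetry~\eqref{om-n-sharp-symmetry} to see that $e_r^p$ is preserved), and of (iii) with the single remark that $\Hm_1$ is infinite on $\Ec_r^p\setminus\Ec_r^2$. Your dominated-convergence argument for (i)--(ii) is exactly the content of the cited theorem, and your treatment of (iii) supplies the construction that the paper's one-liner leaves implicit: two sequences in $\Ec_r^2$ converging to the same limit in $\FL^p$ whose values of $\Hm_1$ differ by a fixed $d$, chosen so that the renormalizing phases differ by $-1$. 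This is the genuinely nontrivial part, and your bookkeeping is sound: $w_{-m}=\ob{w_m}$ on $\Ec_r^p$ by~\eqref{zn-E-symmetry}, finite Hermitian sequences lie in $e_r^2=\Phi_2(\Ec_r^2)$, $\Phi_p^{-1}$ is continuous on the open image near $\Phi_p(\ph_\infty)$ by Theorem~\ref{bhf}(iv), $I_m=\abs{z_m^-}^2$ makes $\Hm_1$ the squared $\ell^2$-norm, the bump's $\ell^p$-norm is $O(N_k^{1/p-1/2})$ (the only place $p>2$ enters), and the ratio of the two unimodular factors is identically $\e^{\mp\ii\pi}=-1$ while it would have to tend to $1$. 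Two small points worth flagging: for $n=0$ the coordinate function \emph{does} extend continuously (no $12n\pi\Hm_1$ term), so the theorem's ``for any $n\in\Z$'' should be read as $n\neq 0$ --- you correctly set this case aside, but ``nothing to show'' understates that the assertion is simply vacuous there; and the reality of $\om_n^{\#}$ on $\Ec_r^p$ for $p>2$, which you invoke to preserve $z_n^+=\ob{z_n^-}$, deserves the one-line justification that $\om_n^{(4)\star}$ is continuous on $\Ws^p$ and real on the dense subset $\FL_r^2$. Neither affects the validity of the argument.
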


\begin{proof}
Item (i) and (ii) follow from \cite[Theorem~E.1]{Kappeler:2016uj}, taking into account that by Corollary~\ref{cor:mkdv-freq}, the identity~\eqref{om-n-sharp-symmetry} holds on $\Ec_{r}^{p}$ for any $2\le p < \infty$, implying that according to the above arguments, $\Sc_{\Phi}^{\#}(\cdot,\ph)$ leaves $e_{r}^{p}$ invariant for any $\ph\in\Ec_{r}^{p}$.

Item (iii) holds since on $\Ec_{r}^{p}\setminus\Ec_{r}^{2}$, $\Hc_{1}$ is infinite.\qed
\end{proof}

We are now in a position to prove Theorem~\ref{thm:wp-defocusing-mkdv}. First let us consider the case $p=2$. According to~\cite{Kappeler:2005gt}, for any initial datum $\ph\in\Ec_{r}^{2}$, there exists a unique global in time solution $\psi(t,x) = \psi(t,x,\ph)$ of~\eqref{mkdv} with $t\mapsto \psi(t,\cdot,\ph)$ being a continuous curve in $\Ec_{r}^{2}$.
Furthermore, for any $t\in\R$, the flow map $\Sc(t,\cdot)\colon \Ec_{r}^{2}\to\Ec_{r}^{2}$ is continuous and for any $T > 0$ the solution map
\[
  \Sc\colon \Ec_{r}^{2}\to C([-T,T],\Ec_{r}^{2}),\qquad \ph\mapsto \psi(\cdot,\cdot,\ph)
\]
is continuous as well. Since the mKdV flow preserves the $L^{2}$-norm of any initial datum in $\Ec_{r}^{2}$, it follows that for any solution $u(t)$ of~\eqref{mkdv} with $u(0) = u_{0}\in L^{2}(\T,\R)$ 
the curve $t\mapsto v(\cdot, t) = u(\cdot + 6\n{u}_{L^{2}}^{2}t,t)$ 
 is a solution of the \rmKdV equation with the same initial datum.
Therefore, on $\Ec_{r}^{2}$, solutions of~\eqref{mkdv} can be easily converted into solutions of~\eqref{rmkdv} and vice versa, hence for any $T > 0$, the solution map $\Sc^{\#}$ of the \rmKdV equation
\[
  \Sc^{\#}\colon \Ec_{r}^{2}\to C([-T,T],\Ec_{r}^{2})
\]
is real analytic and >>equivalent<< to the solution map $\Sc$ of the mKdV equation. In contrast to $\Sc$, the map $\Sc^{\#}$ can be continuously extended to $\Ec_{r}^{p}$ for any $2 < p < \infty$ as follows:

\begin{cor}
\label{cor:wp-mkdv}
\begin{equivenum}
\item
The \rmKdV equation is locally in time $C^{0}$-wellposed in $\FL^{p}(\T,\R)$ for any $2 < p < \infty$.

\item
The \rmKdV equation is globally in time $C^{0}$-wellposed in a neighborhood of $0$ in $\FL^{p}(\T,\R)$ for any $2 < p < \infty$.

\item
For any $2 < p < \infty$ and any $t > 0$, the solution map $\Sc$ of the mKdV equation does not extend continuously to any element of $\FL^{p}(\T,\R)\setminus L^{2}(\T,\R)$.~\fish

\end{equivenum}
\end{cor}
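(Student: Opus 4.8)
The plan is to transport the renormalized flow through the Birkhoff map $\Phi_{p}$ of Theorem~\ref{bhf} and reduce to the explicit Birkhoff-coordinate flow $\Sc_{\Phi}^{\#}$ treated in Theorem~\ref{thm:wp-kdv}, identifying $u\in\FL^{p}(\T,\R)$ with $\ph=(u,u)\in\Ec_{r}^{p}\subset\FL_{r}^{p}$, so that $\Phi_{p}$ restricts to a one-to-one local diffeomorphism $\Ec_{r}^{p}\to e_{r}^{p}$ with open dense image (equal to $e_{r}^{2}$ when $p=2$) and $\Phi_{p}=\Phi_{2}$ on $\Ec_{r}^{2}$ by Theorem~\ref{bhf}(ii). \emph{For (i) and (ii):} fix $w\in\Ec_{r}^{p}$. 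Since $\Phi_{p}$ is a local diffeomorphism at $w$, there is an open $V\ni w$ in $\Ec_{r}^{p}$ mapped homeomorphically by $\Phi_{p}$ onto an open $O:=\Phi_{p}(V)\subset e_{r}^{p}$, with continuous inverse $\Psi:=(\Phi_{p}|_{V})^{-1}$. By Theorem~\ref{thm:wp-kdv}(i)--(ii), $\Sc_{\Phi}^{\#}$ is continuous on $\Ec_{r}^{p}$ with values in $C([-T,T],e_{r}^{p})$ for every $T$; so $(t,\ph)\mapsto\Sc_{\Phi}^{\#}(t,\ph)$ is jointly continuous, $\Sc_{\Phi}^{\#}(0,\ph)=\Phi_{p}(\ph)$, and one finds $T>0$ and an open $U\subset V$ with $w\in U$ and $\Sc_{\Phi}^{\#}(t,v)\in O$ for all $(t,v)\in[-T,T]\times U$. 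For $v\in U$ set $\Sc^{\#}(\cdot,v):=\Psi\circ\Sc_{\Phi}^{\#}(\cdot,v)\colon[-T,T]\to\FL^{p}(\T,\R)$, a continuous curve with $\Sc^{\#}(0,v)=v$. To see this is a solution of~\eqref{rmkdv} in $\FL^{p}$ in the required sense, take $C^{\infty}$-potentials $v_{k}\to v$ in $\FL^{p}$; for $k$ large $v_{k}\in U\cap\Ec_{r}^{2}$, and since the classical \rmKdV flow on $\Ec_{r}^{2}$ is conjugated by $\Phi_{2}=\Phi_{p}$ to $\Sc_{\Phi}^{\#}$, we get $\Phi_{p}(\Sc^{\#}(t,v_{k}))=\Sc_{\Phi}^{\#}(t,v_{k})\to\Sc_{\Phi}^{\#}(t,v)$ in $C([-T,T],e_{r}^{p})$ by continuity of $\Phi_{p}$ and of $\Sc_{\Phi}^{\#}$; applying the continuous $\Psi$ preserves this convergence (uniformly in $t$, by a compactness argument), so $\Sc^{\#}(t,v_{k})\to\Sc^{\#}(t,v)$ in $\FL^{p}$, and the same composition of continuous maps makes $v\mapsto\Sc^{\#}(\cdot,v)$ continuous $U\to C([-T,T],\FL^{p})$, which is (i). For (ii) repeat this at $w=0$: here $\Phi_{p}(0)=0\in O$, and since $\om_{n}^{\#}$ is real-valued on $\Ec_{r}^{p}$ (it is continuous and real on the dense set of finite gap potentials), $\Sc_{\Phi}^{\#}(t,\cdot)$ preserves the $\ell_{c}^{p}$-norm; hence any small ball $B\ni 0$ with $B\subset O$ satisfies $\Sc_{\Phi}^{\#}(t,\ph)\in B$ for all $t\in\R$ whenever $\Phi_{p}(\ph)\in B$, and the construction above applies with $U=\Psi(B)$ and arbitrary $T>0$, giving global $C^{0}$-wellposedness near $0$.

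\emph{For (iii):} let $w\in\FL^{p}(\T,\R)\setminus L^{2}(\T,\R)$ and $t_{0}>0$, and assume for contradiction that the time-$t_{0}$ mKdV flow extends continuously on a $\FL^{p}$-neighborhood of $w$. As finite gap potentials are smooth, hence in $L^{2}$, the potential $(w,w)$ is not finite gap, so $z_{n}^{-}(w)\neq0$ for infinitely many $n$; fix one with $n\neq0$. Composing the assumed extension with the continuous $\Phi_{p}$, the $n$th Birkhoff coordinate of the mKdV flow would extend continuously to $w$. But on $L^{2}$-potentials near $w$ that coordinate equals $\ph\mapsto\e^{-\ii\om_{n}(\ph)t_{0}}z_{n}^{-}(\ph)$ with $\om_{n}=(2n\pi)^{3}+12n\pi\Hm_{1}+\om_{n}^{(4)\star}$, where $\om_{n}^{(4)\star}$ and $z_{n}^{-}$ extend continuously to $w$ (Theorems~\ref{mkdv-freq} and~\ref{bhf}) while $\Hm_{1}((w,w))=\int_{\T}w^{2}\,\dx=\infty$. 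Exactly as in Theorem~\ref{thm:wp-kdv}(iii), one chooses $v_{k}\to w$ in $\FL^{p}$ with $\Hm_{1}(v_{k})\to\infty$ in such a way that $12n\pi\Hm_{1}(v_{k})t_{0}$ winds around the circle — possible because for $p>2$ the $L^{2}$-norm may change by a large amount under $\FL^{p}$-small perturbations; then $\abs{\e^{-\ii\om_{n}(v_{k})t_{0}}z_{n}^{-}(v_{k})}=\abs{z_{n}^{-}(v_{k})}\to\abs{z_{n}^{-}(w)}>0$ while the argument fails to converge, contradicting continuity. Hence no continuous extension exists, for any $w\in\FL^{p}\setminus L^{2}$ and any $t_{0}>0$; the $C([-T,T],\FL^{p})$-statement follows since evaluation at $t_{0}$ is continuous.

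\emph{Main obstacle.} The genuine analytic input — the extension of the frequencies to $\FL_{r}^{p}$ and their asymptotics — is already contained in Theorems~\ref{thm:nls-freq},~\ref{mkdv-freq} and~\ref{thm:wp-kdv}; what is left needs care only at two points. First, one must check that the curve obtained by pulling back through the \emph{local} inverse of $\Phi_{p}$ really satisfies the approximation-by-smooth-solutions definition of a solution, which rests on the conjugacy of the classical $\Ec_{r}^{2}$-flow with $\Sc_{\Phi}^{\#}$ (together with $\Phi_{p}=\Phi_{2}$) and on choosing the neighborhood $U$ and the time $T$ so that the flow stays in the chart; this is routine but must be done honestly. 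Second, and this is the real content of the ill-posedness part, one has to exhibit an approximating sequence along which the $n$th Birkhoff coordinate provably does not converge, i.e.\ force the phase $12n\pi\Hm_{1}t_{0}$ to wind while the modulus stays bounded away from zero — this is exactly where the strict inequality $p>2$ is used.
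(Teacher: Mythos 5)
Your proposal is correct and follows essentially the same route as the paper: conjugate the renormalized flow with the Birkhoff map $\Phi_{p}$, use Theorem~\ref{thm:wp-kdv} for the continuity of $\Sc_{\Phi}^{\#}$ on $\Ec_{r}^{p}$ (and the invariance of balls around $0$ for the global statement), and derive the ill-posedness from the blow-up of $\Hm_{1}$ on $\FL^{p}\setminus L^{2}$ via the winding phase $12n\pi\Hm_{1}t$. The paper's own proof is just a terser version of this, deferring the solution-definition check and the phase-winding argument to Theorem~\ref{thm:wp-kdv}, which you have merely spelled out in more detail.
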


\begin{proof}
(i) Let $\ph\in \Ec_{r}^{p} (\cong \FL^{p}(\T,\R))$ with $2\le p < \infty$. Since the range of $\Phi_{p}$ is open in $\ell_{r}^{p}$, by Theorem~\ref{thm:wp-kdv}, there exists $T > 0$ and a neighborhood $U$ of $\ph$ in $\Ec_{r}^{p}$ so that
\[
  \Sc_{\Phi}^{\#}([-T,T],U) \subset \Phi_{p}(\Ec_{r}^{p}).
\]
Then the map
\[
  \Sc^{\#}\colon U\to C([-T,T],\Ec_{r}^{p}),\qquad \psi \mapsto (t\mapsto \Phi_{p}^{-1}( \Sc_{\Phi}^{\#}(t,\psi)))
\]
is continuous.

(ii) Since the range of $\Phi_{p}$ contains an open neighborhood of the origin, by Theorem~\ref{thm:wp-kdv} we can choose a neighborhood $U_{0}$ of $0$ in $\Ec_{r}^{p}$ so that
\[
  \Sc_{\Phi}^{\#}(\R,U_{0}) \subset \Phi_{p}(\Ec_{r}^{p}).
\]
Then the map
\[
  \Sc^{\#}\colon U_{0}\to C([-T,T],\Ec_{r}^{p}),\qquad \psi \mapsto (t\mapsto \Phi_{p}^{-1}( \Sc_{\Phi}^{\#}(t,\psi)))
\]
is continuous for any $T > 0$.

(iii) The claimed statement directly follows from item (iv) of Theorem~\ref{thm:wp-kdv}.\qed
\end{proof}

\begin{proof}[Proof of Theorem~\ref{thm:wp-defocusing-mkdv}.]
The theorem is a direct consequence of Corollary~\ref{cor:wp-mkdv}.\qed
\end{proof}

\begin{rem}
Since Theorem~\ref{bhf} provides Birkhoff coordinates locally around zero also for the focusing mKdV equation -- cf. \cite{Kappeler:2009ik} -- by the same approach as for the defocusing case, one can show that the renormalized focusing mKdV equation is globally in time $C^{0}$-wellposed locally around zero. Therefore, the focusing mKdV equation is ill-posed in $\Ec_{r}^{p}\setminus \Ec_{r}^{2}$ for $2 < p < \infty$.\map
\end{rem}

%
%
%
%
%
%

\appendix

\section{Discrete Hilbert Transform on $\ell^{p}$}

In this appendix we recall some well known facts on the discrete Hilbert transform on $\ell^{p}(\Z,\C)$ -- see e.g. \cite{Zygmund:1957up}.

\begin{lem}
\label{H-trans}
For any $2 \le p < \infty$, the discrete Hilbert transform
\[
  H\colon \ell_{\C}^{p}\to \ell_{\C}^{p},\qquad (Hx)_{n} =  \sum_{m\neq n} \frac{x_{m}}{m-n},
\]
defines an isomorphism on $\ell_{\C}^{p}$.~\fish
\end{lem}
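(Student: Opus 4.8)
The statement to prove is the boundedness and invertibility of the discrete Hilbert transform $H$ on $\ell^p_{\C}$ for $2 \le p < \infty$. This is a classical result, so the plan is to reduce it to standard harmonic analysis rather than prove it from scratch. The plan is to identify $H$ with (a constant multiple of) the conjugate function operator on the circle via Fourier series, and then invoke the M. Riesz theorem on $L^p(\T)$.

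First I would recall the setup: given $x = (x_n)_{n\in\Z} \in \ell^p_{\C}$, associate to it the formal Fourier series $f(\th) = \sum_{n} x_n e^{in\th}$. For $p = 2$ this is an honest element of $L^2(\T)$ by Parseval; for general $p$ one works on the dense subspace of finitely supported sequences, where $f$ is a trigonometric polynomial, and argues by density at the end. The key computation is that the Fourier multiplier associated to $H$ is bounded: one checks that $(Hx)_n = \sum_{m\ne n} \frac{x_m}{m-n}$ is the $n$th Fourier coefficient of $g = K * f$ where $K(\th) = \sum_{k \ne 0} \frac{e^{ik\th}}{-k}$; summing this series gives $K(\th) = i(\th - \pi)$ on $(0,2\pi)$, i.e.\ up to a constant and a harmless additive term $K$ is the conjugate-function kernel (the Hilbert kernel $\mathrm{p.v.}\,\cot(\th/2)$ differs from it by a bounded term and a constant). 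Hence $x \mapsto Hx$ corresponds on the Fourier side to $f \mapsto $ (conjugate function of $f$) plus a rank-controlled bounded correction.

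Next I would invoke the M. Riesz theorem: the conjugate function operator (equivalently, the Riesz projection, equivalently the periodic Hilbert transform) is bounded on $L^p(\T)$ for every $1 < p < \infty$, with norm $C_p$. Transporting back through the Fourier correspondence — which is an isometry $\ell^2 \to L^2$ and, on trigonometric polynomials, the natural pairing — gives $\|Hx\|_{\ell^p} \le C_p' \|x\|_{\ell^p}$ on finitely supported $x$, hence on all of $\ell^p_{\C}$ by density and completeness. For invertibility, the cleanest route is to observe that $H^2 = -\Id + R$ where $R$ is a bounded operator coming from the diagonal/correction terms; more directly, on the Fourier side $H$ corresponds to a multiplier that is (up to bounded perturbation) multiplication by $-i\,\mathrm{sgn}$-type symbol, whose square is $-\Id$, so $H$ is invertible with $H^{-1} = -H$ up to lower-order terms. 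Alternatively, and perhaps most transparently: the Riesz projection $P_+$ (onto nonnegative frequencies) is bounded on $\ell^p$ by the same theorem, $H$ is expressible through $P_+$ and $P_- = \Id - P_+$, and one reads off a bounded two-sided inverse. Cite \cite{Zygmund:1957up} for the $L^p$ boundedness of the conjugate function.

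The main obstacle is bookkeeping rather than anything deep: one must handle the passage between the ``full'' kernel $\frac{1}{m-n}$ (which omits the diagonal $m = n$) and the genuine convolution operator on $\T$, keeping track of the constant and the bounded error term so that the invertibility — not just boundedness — comes out cleanly. I would isolate this in one short lemma (the kernel identity $\sum_{k\ne 0} e^{ik\th}/(-k) = i(\th-\pi)$ on $(0,2\pi)$, convergence in $L^2$ and as a distribution) and then let the M. Riesz theorem do the rest. Since the paper only needs boundedness (isomorphism) in one fixed range $2 \le p < \infty$, there is no need to chase optimal constants.
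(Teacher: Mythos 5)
The paper does not actually prove this lemma: it is recorded as a classical fact with a pointer to \cite{Zygmund:1957up}, so the only thing to compare your attempt against is the standard argument that citation stands for. Your proposal has a genuine gap at its central step. The operator $H$ is convolution on $\Z$ with the kernel $k(j)=-1/j$ ($j\neq 0$); under the correspondence $x\mapsto f=\sum_n x_n e^{\ii n\th}$ the sequence $Hx$ consists of the Fourier coefficients of the \emph{pointwise product} $K\cdot f$ (not of the convolution $K*f$), where $K(\th)=\ii(\th-\pi)$ is the bounded sawtooth you computed. Multiplication by a bounded function is trivially bounded on every $L^{q}(\T)$ --- no M.~Riesz theorem is needed for that --- but this says nothing about $\ell^{p}$-boundedness of $H$ for $p\neq 2$, because the Fourier series map is an isomorphism between $\ell^{p}$ and a Lebesgue space only for $p=2$. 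If ``transporting back through the Fourier correspondence'' worked as you describe, it would show that \emph{every} bounded symbol defines a bounded convolution operator on $\ell^{p}(\Z)$, which is false for $p\neq 2$. The related identification of $K$ with the conjugate-function kernel ``up to a bounded term and a constant'' is also incorrect: $\mathrm{p.v.}\cot(\th/2)$ has Fourier coefficients $-\ii\,\mathrm{sgn}(k)$ and is unbounded, whereas $K$ has coefficients $-1/k$ and is bounded; they are not comparable. The honest classical routes are (a) comparison with the Hilbert transform on $L^{p}(\R)$ via $x\mapsto\sum_n x_n\chi_{[n,n+1)}$, with the difference of the kernels $\frac{1}{m-n}$ and $\frac{1}{s-t}$ controlled by a Schur test, or (b) a discrete multiplier theorem (Ste\v{c}kin: symbols of bounded variation on $\T$, such as $\ii(\th-\pi)$, are $\ell^{p}(\Z)$ multipliers for $1<p<\infty$).

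Your treatment of invertibility also fails, and in fact the word ``isomorphism'' in the statement cannot be taken literally. The symbol of $H$ is $\ii(\pi-\th)$ on $(0,2\pi)$, which vanishes at $\th=\pi$; hence $H$ is injective but not bounded below even on $\ell^{2}$ (test on sequences whose Fourier transform concentrates near $\th=\pi$), and a bounded translation-invariant inverse would have to be convolution with the symbol $1/\ii(\pi-\th)\notin L^{\infty}(\T)$. In particular the assertions that $H^{2}=-\Id$ up to bounded corrections and that the symbol is of $-\ii\,\mathrm{sgn}$ type are wrong; that description applies to the Hilbert transform on $\R$, not to this operator. None of this affects the paper: only the boundedness of $H$ (and of the variant in Lemma~\ref{A-trans}) is used, in the proof of Theorem~\ref{mkdv-freq}. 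A correct write-up should establish boundedness by route (a) or (b) above and either drop the isomorphism claim or read it as ``bounded endomorphism''.
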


To simplify notation we define $\sg^{0} = (n\pi)_{n\in\Z}$. The following bound of a modified version of the Hilbert transform is used in to estimate certain infinite products.

\begin{lem}
\label{A-trans}
Suppose $\sg = \sg^{0} + \tilde \sg$ and $\rho = \sg^{0} + \tilde \rho$ are sequences of complex numbers with $\tilde\sg,\tilde\rho\in\ell_{\C}^{\infty}$ such that for some $C > 0$
\[
  \abs{\rho_{m}-\sg_{n}} \ge C^{-1}\abs{m-n},\qquad m\neq n.
\]
Then for any $2 \le p < \infty$,
\[
  (Ax)_{n} = \pi\sum_{m\neq n} \frac{x_{m}}{\rho_{m}-\sg_{n}}
\]
defines a bounded linear operator $A$ on $\ell_{\C}^{p}$ whose bound depends only on $\n{\tilde\sg}_{\infty}$, $\n{\tilde\rho}_{\infty}$, $C$, and $p$.~\fish
\end{lem}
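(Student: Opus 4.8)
The plan is to reduce the operator $A$ to the standard discrete Hilbert transform $H$ of Lemma~\ref{H-trans} plus a remainder operator that is manifestly bounded on $\ell^p_{\C}$ because its kernel decays like $|m-n|^{-2}$. Write
\[
  \frac{\pi}{\rho_m - \sg_n}
  = \frac{\pi}{(m-n)\pi + (\tilde\rho_m - \tilde\sg_n)}
  = \frac{1}{m-n}\cdot\frac{1}{1 + \dfrac{\tilde\rho_m - \tilde\sg_n}{(m-n)\pi}}.
\]
Since $\|\tilde\sg\|_\infty,\|\tilde\rho\|_\infty$ are finite, there is an integer $N$, depending only on these norms, such that $\bigl|\tilde\rho_m - \tilde\sg_n\bigr| \le \tfrac{\pi}{2}|m-n|$ whenever $|m-n|\ge N$; on that range we may expand the geometric series and obtain
\[
  \frac{\pi}{\rho_m - \sg_n} = \frac{1}{m-n} + r_{nm},
  \qquad
  r_{nm} = \frac{1}{m-n}\left(\frac{1}{1+\frac{\tilde\rho_m-\tilde\sg_n}{(m-n)\pi}} - 1\right),
\]
and the bound $|r_{nm}| \le \dfrac{2(\|\tilde\sg\|_\infty+\|\tilde\rho\|_\infty)}{\pi\,|m-n|^2}$ for $|m-n|\ge N$.

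Accordingly I would split $A = H + R + B$, where $H$ is the discrete Hilbert transform, $R$ has kernel $r_{nm}\mathbf 1_{|m-n|\ge N}$, and $B$ collects the finitely many "near-diagonal" terms $0<|m-n|<N$, for which we use only the hypothesis $|\rho_m-\sg_n|\ge C^{-1}|m-n|\ge C^{-1}$, i.e. each such kernel entry is bounded by $\pi C$. The operator $H$ is bounded on $\ell^p_{\C}$ by Lemma~\ref{H-trans}. The operator $R$ is bounded by Schur's test (or directly by Young's inequality for the convolution-dominated kernel $|m-n|^{-2}$), with norm controlled by $\sum_{k\ge N}k^{-2}$ times the constant above, hence by a quantity depending only on $\|\tilde\sg\|_\infty$, $\|\tilde\rho\|_\infty$ and $p$. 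For $B$, since for each fixed offset $k$ with $0<|k|<N$ the map $x\mapsto (x_{n+k})_n$ is an isometry of $\ell^p_{\C}$ and the coefficient $\pi/(\rho_{n+k}-\sg_n)$ is bounded in $n$ by $\pi C$, each of the at most $2N-1$ pieces of $B$ has norm at most $\pi C$, so $\|B\|\le (2N-1)\pi C$, again depending only on the stated data. Adding the three bounds gives the claim.

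The only mildly delicate point is bookkeeping: one must ensure the threshold $N$ is chosen as a function of $\|\tilde\sg\|_\infty$ and $\|\tilde\rho\|_\infty$ alone (so that both the geometric-series expansion in the tail and the count $2N-1$ in the near-diagonal block are controlled by the allowed parameters), and that the Schur/Young estimate for $R$ does not secretly reintroduce a dependence on the sequences themselves — it does not, because $\sup_{n}\sum_{m}|r_{nm}|$ and $\sup_m\sum_n|r_{nm}|$ are both bounded by a constant multiple of $(\|\tilde\sg\|_\infty+\|\tilde\rho\|_\infty)\sum_{k\ge 1}k^{-2}$. I do not expect any real obstacle here; the estimate is essentially a perturbation of Lemma~\ref{H-trans} by an absolutely summable kernel.
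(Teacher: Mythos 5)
Your argument is sound, and it is worth noting that the paper itself offers no proof of this lemma: it is stated in the appendix as a known fact with a pointer to Zygmund, so any comparison is with the standard literature rather than with an in-text argument. Two remarks. First, a small bookkeeping slip: as written, $A = H + R + B$ does not balance, because $H$ carries the full kernel $\tfrac{1}{m-n}$ including the band $0<|m-n|<N$, while $B$ carries the full kernel $\tfrac{\pi}{\rho_m-\sg_n}$ on that same band; you must either subtract from $H$ its near-diagonal band (itself a sum of at most $2N-2$ shifts, each of norm $\le 1$, so harmless) or define $B$ as the \emph{difference} of the two kernels there. Either fix is immediate and your constants survive. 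Second, the three-way split is avoidable: the algebraic identity
\[
  \frac{\pi}{\rho_{m}-\sg_{n}} = \frac{1}{m-n} - \frac{\tilde\rho_{m}-\tilde\sg_{n}}{(m-n)\,(\rho_{m}-\sg_{n})}
\]
holds for every $m\neq n$, and the hypothesis $\abs{\rho_{m}-\sg_{n}}\ge C^{-1}\abs{m-n}$ bounds the remainder kernel by $C\bigl(\n{\tilde\sg}_{\infty}+\n{\tilde\rho}_{\infty}\bigr)\abs{m-n}^{-2}$ uniformly, so $A = H + R$ with $R$ controlled by Schur's test and no threshold $N$ or near-diagonal block is needed. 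This is the usual route in the references (e.g.\ the appendices of the Gr\'ebert--Kappeler monograph); your geometric-series version buys nothing extra but costs the extra case distinction. Both yield the claimed dependence of the bound on $\n{\tilde\sg}_{\infty}$, $\n{\tilde\rho}_{\infty}$, $C$, and $p$ only.
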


\section{Infinite products}

In this appendix we provide several estimates of infinite products with $\ell^{p}$ coefficients. The proofs for the case $p=2$ can be found in \cite{Grebert:2014iq} and their generalization to the case $2 \le p < \infty$ is straightforward -- see also~\cite{Molnar:2016uq}.

Recall from~\eqref{s-root} that for $\ph\in\Ws^{p}$ the standard root is defined by
\[
  \vs_{n}(\lm,\ph) = (\tau_{n}-\lm)\sqrt[+]{1- \gm_{n}^{2}/4(\tau_{n}-\lm)^{2}},\qquad
  \lm\notin G_{n},
\]
and is analytic in both variables on $(\C\setminus \ob{U_{n}})\times V_{\ph}$, where $U_{n}$, $V_{\ph}$, and $\Ws^{p}$ have been introduced in Section~2 -- see~\ref{iso-1}-\ref{iso-3}.

\begin{lem}
For any $\ph$ in $\Ws^{p}$, $2 \le p < \infty$, and $n\in\Z$,
\[
  f_{n}(\lm) = \frac{1}{\pi_{n}}\prod_{m\neq n} \frac{\vs_{m}(\lm)}{\pi_{m}}
\]
defines a function which is analytic in $\lm$ on $\C\setminus\bigcup_{m\neq n} G_{m}$ and analytic in both variables on $(\C\setminus \bigcup_{m\neq n} \ob{U_{m}})\times V_{\ph}$. Moreover, $f_{n}$ does not vanish on these domains.~\fish
\end{lem}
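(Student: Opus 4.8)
The plan is to show that the product defining $f_{n}$ converges locally uniformly --- so that analyticity follows from the Weierstrass theorem --- and then to read off the non-vanishing by controlling the tail. The first step I would take is to use~\eqref{s-root} to split off the ``leading'' behaviour of each factor: for $m\neq n$ and $\lm\notin G_{m}$,
\[
  \frac{\vs_{m}(\lm)}{\pi_{m}} = \frac{\tau_{m}-\lm}{\pi_{m}}\, s_{m}(\lm),\qquad
  s_{m}(\lm) = \sqrt[+]{1-\gm_{m}^{2}/4(\tau_{m}-\lm)^{2}},
\]
so that $f_{n} = \pi_{n}^{-1}\bigl(\prod_{m\neq n}\tfrac{\tau_{m}-\lm}{\pi_{m}}\bigr)\bigl(\prod_{m\neq n}s_{m}(\lm)\bigr)$. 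Fix a compact set $K\subset\C\setminus\bigcup_{m\neq n}\ob{U_{m}}$ and choose $N>\abs{n}$ so large (depending on $K$) that $U_{m}=D_{m}$ and $\dist(K,D_{m})\ge 1$ for all $\abs{m}\ge N$; in particular $\abs{\tau_{m}-\lm}\ge c\abs{m}$ on $K$ for $\abs{m}\ge N$. I then treat the finite ``head'' $\abs{m}<N$ and the ``tail'' $\abs{m}\ge N$ separately.

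For the tail of $\prod s_{m}$, since $\gm_{m}=\ell_{m}^{p}\subset\ell_{m}^{\infty}$ and $\abs{\tau_{m}-\lm}^{-1}=O(1/\abs{m})$ on $K$, one gets $\abs{s_{m}(\lm)-1}\le C\gm_{m}^{2}/m^{2}$ for $\abs{m}\ge N$, which is summable; hence $\prod_{\abs{m}\ge N}s_{m}$ converges absolutely and uniformly on $K$ to an analytic, non-vanishing function. For the tail of $\prod\tfrac{\tau_{m}-\lm}{\pi_{m}}$ I would pair the indices $m$ and $-m$ (a complete pairing, since $N>\abs{n}$): with $a_{m}=\tau_{m}-m\pi=\ell_{m}^{p}$ one computes
\[
  \frac{(\tau_{m}-\lm)(\tau_{-m}-\lm)}{\pi_{m}\pi_{-m}}
  = 1-\frac{\lm^{2}}{m^{2}\pi^{2}}+\frac{a_{m}-a_{-m}}{m\pi}
  +O\!\Bigl(\tfrac{\abs{a_{m}}+\abs{a_{-m}}}{m^{2}}\Bigr),
\]
and every error term is summable, the only delicate one being $\sum\abs{a_{m}}/\abs{m}$, finite by Hölder because $(a_{m})\in\ell^{p}$ and $(1/m)\in\ell^{p'}$ with $p'<\infty$. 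Thus the symmetric partial products of $\prod_{\abs{m}\ge N}\tfrac{\tau_{m}-\lm}{\pi_{m}}$ converge absolutely and uniformly on $K$ to an analytic function, non-vanishing on $K$ since each factor is non-zero there ($\tau_{m}\notin K$) and the tail of an absolutely convergent product of non-zero numbers is non-zero.

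The head $\prod_{\abs{m}<N,\,m\neq n}\tfrac{\vs_{m}(\lm)}{\pi_{m}}$ is a finite product of functions analytic and non-vanishing on $K$, as each $\vs_{m}$ is analytic on $\C\setminus\ob{U_{m}}\supset K$ with its only zeros $\lm_{m}^{\pm}$ inside $U_{m}$. Combining head and tail, $f_{n}$ is analytic and non-vanishing on $\C\setminus\bigcup_{m\neq n}\ob{U_{m}}$; analyticity in the second variable on $V_{\ph}$ follows identically, using that $\vs_{m}(\lm,\ph)$ --- equivalently $\tau_{m}(\ph)$ and $\gm_{m}^{2}(\ph)$ --- is analytic in $(\lm,\ph)$ on $(\C\setminus\ob{U_{m}})\times V_{\ph}$. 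To extend the $\lm$-analyticity to $\C\setminus\bigcup_{m\neq n}G_{m}$ I would re-bracket: a point $\lm$ that avoids all $G_{m}$, $m\neq n$, but lies in $\ob{U_{m_{0}}}$ for some (necessarily unique) $m_{0}\neq n$ sits in the domain $\C\setminus G_{m_{0}}$, where $\vs_{m_{0}}(\lm)/\pi_{m_{0}}$ is analytic, and in the domain $\C\setminus\bigcup_{m\neq n,m_{0}}\ob{U_{m}}$, where $\pi_{n}^{-1}\prod_{m\neq n,m_{0}}\vs_{m}(\lm)/\pi_{m}$ is analytic and non-vanishing (by the previous paragraph applied to the index set $\{m\neq n,m_{0}\}$, whose discs avoid a neighbourhood of $\ob{U_{m_{0}}}$); hence $f_{n}$ is analytic and non-vanishing there, and the union over $m_{0}$ covers $\C\setminus\bigcup_{m\neq n}G_{m}$.

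The main obstacle is the reference product $\prod_{m\neq n}\tfrac{\tau_{m}-\lm}{\pi_{m}}$, which converges only conditionally, its factors deviating from $1$ by terms of size $\lm/(m\pi)$; this is resolved by the $\pm m$ pairing, after which the surviving deviations are genuinely $\ell^{1}$. One has to take mild care that deleting the single index $n$ does not spoil the pairing (absorb $\pm n$ into the finite head) and to use the re-bracketing near the boundaries $\partial U_{m}$. Everything else is routine; for $p=2$ the assertion is contained in~\cite{Grebert:2014iq}, and the extension to $2\le p<\infty$ is immediate.
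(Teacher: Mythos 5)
Your proof is correct, and it is essentially the standard argument: the paper itself does not prove this lemma but defers to \cite{Grebert:2014iq} and \cite{Molnar:2016uq}, where the same decomposition into the conditionally convergent reference product $\prod_{m}\frac{\tau_{m}-\lm}{\pi_{m}}$ (handled by the $\pm m$ pairing, exactly as in the product formula for $\sin$) and an absolutely convergent square-root correction is used. The only points worth stating explicitly are that the infinite product is understood as the limit of symmetric partial products, and that the locally uniform bounds $\gm_{m},\ \tau_{m}-m\pi=\ell_{m}^{p}$ on $V_{\ph}$ together with the analyticity of $\tau_{m}$ and $\gm_{m}^{2}$ in $\ph$ are what upgrade uniform convergence on compacta to joint analyticity --- both of which your argument already invokes.
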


We denote $\sg^{0} = (n\pi)_{n\in\Z}$.

\begin{cor}
\label{wm-ana-quot}
For any $\ph\in \Ws^{p}$, $\tilde\sg =\sg-\sg^{0}\in\ell^{p}$, and $n\in\Z$, the function
\[
  \phi_{n}(\lm,\tilde\sg) = \prod_{m\neq n} \frac{\sg_{m}-\lm}{\vs_{m}(\lm)}
\]
is analytic on $(\C\setminus \bigcup_{m\neq n} \ob{U_{m}})\times\ell^{p}\times V_{\ph}$.~\fish
\end{cor}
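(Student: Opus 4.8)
The plan is to deduce the Corollary from the preceding Lemma on $f_{n}$ by a factorization. Splitting each factor as $\tfrac{\sg_{m}-\lm}{\vs_{m}(\lm)}=\tfrac{\sg_{m}-\lm}{\pi_{m}}\cdot\tfrac{\pi_{m}}{\vs_{m}(\lm)}$ and passing to partial products, one obtains
\[
  \phi_{n}(\lm,\tsg)=\p*{\prod_{m\neq n}\frac{\sg_{m}-\lm}{\pi_{m}}}\cdot\p*{\prod_{m\neq n}\frac{\pi_{m}}{\vs_{m}(\lm)}}=g_{n}(\lm,\tsg)\cdot\frac{1}{\pi_{n}f_{n}(\lm,\ph)},
\]
where $g_{n}(\lm,\tsg)\defl\prod_{m\neq n}\frac{\sg_{m}-\lm}{\pi_{m}}$ is independent of $\ph$ and $f_{n}(\lm,\ph)=\frac{1}{\pi_{n}}\prod_{m\neq n}\frac{\vs_{m}(\lm)}{\pi_{m}}$. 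By the preceding Lemma $f_{n}$ is analytic and nowhere vanishing on $(\C\setminus\bigcup_{m\neq n}\ob{U_{m}})\times V_{\ph}$, so $\prod_{m\neq n}\frac{\pi_{m}}{\vs_{m}(\lm)}=(\pi_{n}f_{n}(\lm,\ph))^{-1}$ converges and is analytic there as well. Hence the Corollary reduces to showing that $g_{n}$ is jointly analytic in $(\lm,\tsg)$ on $\C\times\ell^{p}$.

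For $g_{n}$ I would extract the $\tsg$-independent part: since $\sg_{m}=m\pi+\tsg_{m}$,
\[
  g_{n}(\lm,\tsg)=\p*{\prod_{m\neq n}\frac{m\pi-\lm}{\pi_{m}}}\cdot\prod_{m\neq n}\p*{1+\frac{\tsg_{m}}{m\pi-\lm}}.
\]
The first product is an explicit entire function of $\lm$ (essentially $\sin\lm/\lm$, corrected by the finitely many omitted factors via a removable-singularity argument at $\lm=n\pi$) and does not involve $\tsg$. In the second product each factor is affine — hence analytic — in $\tsg_{m}$ and analytic in $\lm$ off $m\pi$, and by Hölder's inequality
\[
  \sum_{m\neq n}\abs*{\frac{\tsg_{m}}{m\pi-\lm}}\le\n{\tsg}_{\ell^{p}}\,\Bigl(\sum_{m\neq n}\frac{1}{\abs{m\pi-\lm}^{p'}}\Bigr)^{1/p'},
\]
which is locally uniformly bounded in $(\lm,\tsg)$ on $(\C\setminus\bigcup_{m\neq n}\ob{U_{m}})\times\ell^{p}$, because $p<\infty$ gives $p'>1$ and hence $(1/m)_{m\neq 0}\in\ell^{p'}$. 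The Weierstrass criterion for infinite products of analytic functions then gives joint analyticity of the second product; its simple poles at $\lm=m\pi$, $m\neq n$, are cancelled by the simple zeros of the first product, so $g_{n}$ is entire in $\lm$ and jointly analytic on $\C\times\ell^{p}$. For $p=2$ this is the construction of \cite{Grebert:2014iq}, and the passage to $2\le p<\infty$ is the routine modification indicated in \cite{Molnar:2016uq}.

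Multiplying the two analytic factors then shows that $\phi_{n}$ is analytic on $(\C\setminus\bigcup_{m\neq n}\ob{U_{m}})\times\ell^{p}\times V_{\ph}$. The only step that is not purely formal is the convergence of $g_{n}$: since $\sum_{m}\abs{\lm}/\abs{m\pi}$ diverges, one cannot argue by absolute convergence of $\prod_{m\neq n}(\sg_{m}-\lm)/\pi_{m}$ directly and must either pair the indices $\pm m$ or, as above, first peel off the entire factor $\prod_{m\neq n}\frac{m\pi-\lm}{\pi_{m}}$ — and it is exactly here that the hypothesis $\tsg\in\ell^{p}$ enters, through $\tsg\in\ell^{\infty}$ and $(1/m)\in\ell^{p'}$.
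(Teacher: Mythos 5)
Your proof is correct and follows the route the paper itself intends: the statement is placed as a corollary of the lemma on $f_{n}$ precisely so that one factors $\phi_{n}=g_{n}\cdot(\pi_{n}f_{n})^{-1}$ and reduces to the joint analyticity of $\prod_{m\neq n}(\sg_{m}-\lm)/\pi_{m}$, which is the standard infinite-product estimate from \cite{Grebert:2014iq} (for $p=2$) and \cite{Molnar:2016uq} (for $2\le p<\infty$); the paper omits the details for exactly this reason. The one point to tidy up is that for the finitely many indices $|m|<N_{\ph}$ the point $m\pi$ need not lie in $U_{m}$, so the H\"older bound on $\sum_{m\neq n}|\tsg_{m}/(m\pi-\lm)|$ should be restricted to the tail $|m|\ge N_{\ph}$, the exceptional factors being recombined into the polynomials $(\sg_{m}-\lm)/\pi_{m}$ -- which your pole-cancellation remark already implicitly handles.
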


We also introduce $\tau = (\tau_{n})_{n\in\Z}$ then $\tau-\sg^{0}\in \ell^{p}$ for any $\ph\in\Ws^{p}$.

\begin{lem}
\label{ana-quot-w-lp}
For any $\ph\in \Ws^{p}$, $2 \le p <\infty$, and $\sg-\sg^{0}\in\ell^{p}$ with $\sg-\tau\in\ell^{q}$, $1\le q\le p$,
\[
  \sup_{\lm\in U_{n}} \abs*{\prod_{m\neq n}\frac{\sg_{m}-\lm}{\vs_{m}(\lm)}-1}
   = \ell_{n}^{q} +\ell_{n}^{p/2} + \ell_{n}^{1+},
\]
locally uniformly on $\ell^{p}\times V_{\ph}$. As a consequence,
\[
  \frac{\sin \lm}{\lm-n\pi}\p*{\frac{1}{\pi_{n}}\prod_{m\neq n} \frac{\vs_{m}(\lm)}{\pi_{m}} }^{-1} = 1 + \ell_{n}^{q}+\ell_{n}^{p/2} + \ell_{n}^{1+},\qquad \lm\in U_{n}
\]
locally uniformly on $V_{\ph}$.~\fish
\end{lem}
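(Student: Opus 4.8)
The plan is to estimate the infinite product factor by factor, to expand $\prod_{m\neq n}\frac{\sg_m-\lm}{\vs_m(\lm)}-1$ into its linear term plus lower order product terms, and to recognise that the linear term is, modulo fast‑decaying corrections, a generalised discrete Hilbert transform, so that Lemma~\ref{A-trans} applies; every remaining contribution will convolve against a kernel in $\ell^{1}$, hence will stay in the $\ell^{p/2}$ (or $\ell^{1+}$) bucket.

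Fix $\ph\in\Ws^{p}$ with a neighbourhood $V_{\ph}$ and isolating discs $(U_{m})$ as in \ref{iso-1}--\ref{iso-3}, fix $n$, and for $m\neq n$, $\lm\in U_{n}$ write (with $\tau_{m}=(\lm_{m}^{-}+\lm_{m}^{+})/2$)
\[
  \frac{\sg_{m}-\lm}{\vs_{m}(\lm)}-1=\frac{\sg_{m}-\tau_{m}}{\tau_{m}-\lm}+r_{m}(\lm),\quad
  r_{m}(\lm)=(\sg_{m}-\tau_{m})\Bigl(\frac{1}{\vs_{m}(\lm)}-\frac{1}{\tau_{m}-\lm}\Bigr)+\frac{(\tau_{m}-\lm)-\vs_{m}(\lm)}{\vs_{m}(\lm)}.
\]
By \eqref{s-root}, $(\tau_{m}-\lm)-\vs_{m}(\lm)=(\tau_{m}-\lm)\bigl(1-\sqrt[+]{1-\gm_{m}^{2}/4(\tau_{m}-\lm)^{2}}\bigr)=O(\gm_{m}^{2}/|\tau_{m}-\lm|)$, while on $U_{n}$ one has $|\tau_{m}-\lm|$ of order $|m-n|$ and, by \eqref{s-root-est} and \eqref{iso-est}, $\inf_{\lm\in U_{n}}|\vs_{m}(\lm)|\ge c^{-1}|m-n|$; hence $\sup_{\lm\in U_{n}}|r_{m}(\lm)|\le C\gm_{m}^{2}/|m-n|^{2}$ locally uniformly, so $R_{n}\defl\sum_{m\neq n}\sup_{U_{n}}|r_{m}(\lm)|$ is an $\ell^{p/2}$‑sequence in $n$ by Young's inequality (convolution of $(\gm_{m}^{2})\in\ell^{p/2}$ with $(|k|^{-2})\in\ell^{1}$). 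In particular the whole product converges and its logarithm is bounded uniformly in $n$.

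Next expand $\prod_{m\neq n}(1+a_{m}(\lm))-1=\sum_{m\neq n}a_{m}(\lm)+\sum_{|S|=2}\prod_{m\in S}a_{m}(\lm)+\sum_{|S|\ge3}\prod_{m\in S}a_{m}(\lm)$. For the linear part, for each $n$ pick $\lm^{\ast}(n)\in U_{n}$ where $\sup_{\lm\in U_{n}}|\sum_{m\neq n}\frac{\sg_{m}-\tau_{m}}{\tau_{m}-\lm}|$ is attained; since $|\tau_{m}-\lm^{\ast}(n)|\ge c^{-1}|m-n|$ for $m\neq n$ by \eqref{iso-est} and $(\tau_{m}-m\pi)_{m},(\lm^{\ast}(n)-n\pi)_{n}\in\ell^{\infty}$, Lemma~\ref{A-trans} applied with $\rho_{m}=\tau_{m}$, $\sg_{n}=\lm^{\ast}(n)$ at exponent $q$ (this is its statement for $q\ge2$; for $1<q<2$, resp. $q=1$, the $\ell^{q}$‑, resp. weak $(1,1)$‑, bound for generalised discrete Hilbert transforms is classical) shows $\sup_{U_{n}}|\sum_{m\neq n}\frac{\sg_{m}-\tau_{m}}{\tau_{m}-\lm}|=\ell_{n}^{q}$ (resp. $\ell_{n}^{1+}$). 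Together with $R_{n}=\ell_{n}^{p/2}$ this gives $\sup_{U_{n}}|\sum_{m\neq n}a_{m}(\lm)|=\ell_{n}^{q}+\ell_{n}^{p/2}+\ell_{n}^{1+}$. For the quadratic term use $\sum_{|S|=2}\prod_{m\in S}a_{m}=\tfrac12\bigl((\sum_{m}a_{m})^{2}-\sum_{m}a_{m}^{2}\bigr)$: squaring the $\ell^{q}$‑bound just obtained yields an $\ell^{q/2}$‑sequence, and $\sum_{m}\sup_{U_{n}}|a_{m}|^{2}$ is a convolution of $(|\sg_{m}-\tau_{m}|^{2})\in\ell^{q/2}$ and $(\gm_{m}^{4})\in\ell^{p/4}$ against $(|k|^{-2})\in\ell^{1}$; since $q\le p$ all resulting exponents are $\le p/2$ or $<1$, so this term lies in $\ell_{n}^{p/2}+\ell_{n}^{1+}$. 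Finally $|\sum_{|S|\ge3}\prod_{m\in S}a_{m}|\le C(\sum_{m}\sup_{U_{n}}|a_{m}|)^{3}$, whose cube of exponent is again below $p/2$. Collecting the three buckets yields $\ell_{n}^{q}+\ell_{n}^{p/2}+\ell_{n}^{1+}$, and tracking the constants in \eqref{s-root-est}, \eqref{iso-est} and in Lemma~\ref{A-trans} gives local uniformity on $\ell^{p}\times V_{\ph}$.

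The main obstacle is precisely the linear term: bounding $\sum_{m\neq n}|\sg_{m}-\tau_{m}|/|m-n|$ by absolute values only puts it in the weak space $\ell^{q,\infty}$, which is strictly larger than $\ell^{q}$ and is \emph{not} contained in $\ell^{q}+\ell^{p/2}+\ell^{1+}$ once $p\le 2q$; one genuinely has to keep the signs and invoke the \emph{strong} $\ell^{q}$‑boundedness of the generalised discrete Hilbert transform (Lemma~\ref{A-trans}) — for both the first‑ and second‑order terms. The remainder of the work is the bookkeeping that makes every discarded term carry a spare power $|m-n|^{-1}$ (hence an honest $\ell^{1}$ kernel), after which $q\le p$ pushes the exponents into the $\ell^{p/2}$ or $\ell^{1+}$ bucket; this is exactly the $p=2$ argument of \cite{Grebert:2014iq} carried over, cf. \cite{Molnar:2016uq}. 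The stated "consequence" is then the special case $\sg=\sg^{0}=(m\pi)_{m\in\Z}$ (so $\tilde\sg=0$ and $\sg-\tau=-(\tau-\sg^{0})\in\ell^{p}$, i.e. $q=p$), combined with the classical product representation of $\sin$, which identifies $\frac{\sin\lm}{\lm-n\pi}\bigl(\frac{1}{\pi_{n}}\prod_{m\neq n}\frac{\vs_{m}(\lm)}{\pi_{m}}\bigr)^{-1}$ with $\prod_{m\neq n}\frac{m\pi-\lm}{\vs_{m}(\lm)}$.
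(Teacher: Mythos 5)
The paper does not prove this lemma itself --- the appendix defers to \cite{Grebert:2014iq} (for $p=2$) and \cite{Molnar:2016uq} --- and the argument there is essentially the one you reconstruct: split each factor into the Hilbert-transform term $\frac{\sg_{m}-\tau_{m}}{\tau_{m}-\lm}$ plus a remainder of size $\gm_{m}^{2}/|m-n|^{2}$, evaluate the linear sum at a maximizing point $\lm^{\ast}(n)\in U_{n}$ so that Lemma~\ref{A-trans} applies, and absorb the quadratic and higher-order terms into the $\ell^{p/2}$ and $\ell^{1+}$ buckets via $\ell^{1}$-kernel convolutions. Your write-up is correct, including the identification of the ``consequence'' through the sine product; the one point genuinely needing the extra remark you make is that Lemma~\ref{A-trans} is stated only for exponents $\ge 2$, so for $1\le q<2$ one must invoke the duality/weak-type extension of the generalized discrete Hilbert transform.
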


%

\end{document}